\newtheorem{theorem}{Theorem}[section]
\newtheorem{lemma}[theorem]{Lemma}
\newtheorem{claim}[theorem]{Claim}
\newtheorem{proposition}[theorem]{Proposition}
\theoremstyle{definition}
\newtheorem{definition}[theorem]{Definition}
\theoremstyle{remark}
\newtheorem {question}[theorem]{Question}
\numberwithin{equation}{section}
\theoremstyle{noparens}
\newtheorem*{question*}{Question}
\newtheorem*{theorem*}{Theorem}
\newtheorem*{lemma*}{Lemma}
\newcommand{\uhr}{{\upharpoonright}}
\def\mcal{\mathcal}
\def\t{\tilde}
\def\h{\hat}
\def\v{\vec}
\def\msf{\mathsf}
\def\good{good}
\def\QX{Q(X,\cdot)}
\def\PX{P(X,\cdot)}
\def\exclude{exclude}
\def\mbk{\bm{k}}
\def\ch{\check}
\title{Coding power of product of partitions}
\author{Lu Liu}
\address{Department of Mathematics,
Central South University,
City Changsha, Hunan Province,
China. 410083}
\email{g.jiayi.liu@gmail.com}
\subjclass[2010]{Primary 68Q30 }
\keywords{computability theory,  reverse math, Ramsey's theorem, Muchnick degree}
\begin{document}
\begin{abstract}
 Given two combinatorial notions $\mathsf{P}_0$ and $\mathsf{P}_1$, can we encode $\mathsf{P}_0$ via $\mathsf{P}_1$. In this talk we address the question where $\mathsf{P}_0$ is  3-coloring of integers and $\mathsf{P}_1$ is  product of finitely many 2-colorings of integers.

       We firstly reduce the question to a lemma which asserts that certain $\Pi^0_1$ class of colorings admit two members violating a particular combinatorial constraint. Then we took a digression to see how complex does the class has to be so as to maintain the cross  constraint.

       We weaken the  two members in the lemma in certain way to address an open question
        of Cholak, Dzhafarov, Hirschfeldt and Patey, concerning a sort of Weihrauch degree of stable Ramsey's theorem for pairs. It turns out the resulted strengthen of the lemma is a basis theorem for $\Pi^0_1$ class with additional constraint. We look at several such variants of basis theorem, among them some are unknown.
             We end up by introducing some results and questions concerning product of infinitely many colorings.
\end{abstract}

\maketitle

\section{Introduction}

Given two combinatorial notions $\msf{P}_0,\msf{P}_1$,
does $\msf{P}_1$ encode $\msf{P}_1$. 
Depending on what ``encode" means, the question
appear in many fields of computer science.
For example, consider the error correcting code.
The encoding scheme is a function $f: k^n\rightarrow k^{\h n}$
such that for every $\rho_0\ne\rho_1\in k^n$,
the hamming distance between $f(\rho_0),f(\rho_1)$
is greater than certain value $m$.
In this case, given a piece of
information $\rho\in k^n$, a corrupted message $\h\rho\in k^{\h n}$
(which means the hamming distance between $\h \rho, f(\rho)$,
namely $|\{l:\h\rho(l)\ne f(\rho)(l)\}|$,
is below a certain value $m$), based on 
$\h\rho$ we can recover $\rho$ since $\rho$ is the unique
string in $k^n$ such that its $f$ value is within hamming distance $m$
to $\h\rho$.
In this example, 
an instance of problem $\msf{P}_0$ is a string $\rho\in k^n$,
and a solution to $\rho$ is $\rho$ itself;
an instance of problem $\msf{P}_1$ is a string $\h\rho\in k^{\h n}$,
and a solution to $\h\rho$ is a string $\t\rho\in k^{\h n}$
such that the hamming distance between $\t\rho,\h\rho$ is no greater 
than $m$.
By $\msf{P}_1$ encode $\msf{P}_0$, it means for every instance  $\rho$
of $\msf{P}_1$, there is an instance  of $\msf{P}_1$, namely $f(\rho)$,
such that every solution of $f(\rho)$ recovers $\rho$.

In computability theory, we define ``recover" by Turing computable.
Therefore, the resulted version of this encoding question
becomes: whether $\msf{P}_0$ is soc-reducible to 
$\msf{P}_1$. Where
  $\msf{P}_0$ is \emph{soc-reducible}
(strongly omniscent reducible \cite{monin2019pi}) to 
$\msf{P}_1$ iff for every $\msf{P}_0$ instance $I_0$,
there is a $\msf{P}_1$ instance  $I_1$ such that every solution to 
$I_1$ Turing computes a solution to $I_0$.
Let's see more example of instance solution framework. 
\begin{itemize}
\item For the problem $\msf{RT}_k^n$ (Ramsey's theorem for $n$ tuples with $k$ colors),
a $\msf{RT}_k^n$ instance is a coloring $C: [\omega]^n\rightarrow k$,
a solution is an infinite set $G\subseteq \omega$ monochromatic for $C$, i.e.,
$|C([G]^n)|=1$.

\item For the problem $\msf{DNR}_h$ ($h$-bounded $\msf{DNR}$ where $h$ is a function from 
$\omega$ to $\omega$),
a $\msf{DNR}_h$ instance is an infinite sequence of integers $X\in\omega^\omega$,
a solution to $X$ is a $Y\in h^\omega$ such that $Y(n)\ne X(n)$ for all $n\in\omega$.

\end{itemize}

\ \\

\noindent\textbf{Coding power of $(\msf{RT}_2^1)^{<\omega}$.}
In this paper, we address a particular encoding question——
whether $\msf{RT}_3^1$ is soc-reducible to $(\msf{RT}_2^1)^{<\omega}$.
Where an $(\msf{RT}_2^1)^{<\omega}$
instance is, for some $r\in\omega$,
an $r$-tuple of $2$-colorings $(C_0,\cdots,C_{r-1})\in (2^\omega)^r$
and a solution to $(C_0,\cdots,C_{r-1})$
is an $r$-tuple of infinite sets $(G_0,\cdots,G_{r-1})$
such that $G_s$ is monochromatic for $C_s$ for all $s<r$.

Firstly, we prove that $\msf{RT}_3^1$
is not soc-reducible to $(\msf{RT}_2^1)^{<\omega}$
in Theorem \ref{limitth0}.
Our  approach to sperate $\msf{RT}_3^1$ from  $(\msf{RT}_2^1)^{<\omega}$
appeals to many soc-reducibility questions
where the solution of a $\msf{P}_0,\msf{P}_1$
instance $I$ is an ``almost" $\Pi_1^0$ defined
combinatorial relation. Where ``almost" $\Pi_1^0$
means, taking $\msf{RT}_k^1$ as an  example:
for a $\msf{RT}_k^1$ instance $C$, 
the definition $G$ is solution to $C$
means,
(a) $G\subseteq C^{-1}(j)$ for some $j\in k$;
(b) $G$ is infinite.
Part (a) is a $\Pi_1^0$ relation; and part (b)
eliminate a small portion of $2^\omega$ from the solution space. 
This approach reduce Theorem \ref{limitth0} to a lemma
which asserts that for a $\Pi_1^0$ class $Q$
in the product space $\msf{P}_0\times \msf{P}_1$
(where for each member $(I_0,I_1)$ of $\msf{P}_0\times \msf{P}_1$,
$I_i$ is $\msf{P}_i$ instance), 
if  
the projection of $Q$ on its first component is sufficiently large,
then there are two members $(X^0,Y^0),(X^1,Y^1)$ of $Q$
such that, roughly speaking, $X^0,X^1$ has no common solution while 
$Y^0,Y^1$ has.
For Theorem \ref{limitth0},
this lemma is the following.
For two $\msf{RT}_k^1$ instances $C_0,C_1$, we say $C_0,C_1$ are \emph{almost disjoint}
if for every $j\in k$, $C_0^{-1}(j)\cap C_1^{-1}(j)$ is finite.
Let $Q\subseteq 3^\omega\times (2^\omega)^r$ be
a $\Pi_1^0$ class such that for every $X\in 3^\omega$,
there exists a $Y\in (2^\omega)^r$
such that $(X,Y)\in Q$ (in which case we say
$Q$ has \emph{full projection} on $3^\omega$).
\begin{lemma}\label{limitlem9}
There exist $(X^0,Y^0),(X^1,Y^1)\in Q$
such that:
suppose $Y^i=(Y^i_0,\cdots,Y^i_{r-1})$, we have
 $X^0,X^1$ are almost disjoint
and $Y^0_s,Y^1_s$ are not almost disjoint for all $s<r$.
\end{lemma}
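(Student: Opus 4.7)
The plan is to prove Lemma \ref{limitlem9} by induction on $r$, with the base case $r=1$ handled by a Ramsey-type argument using three pairwise almost disjoint $3$-colorings, and the inductive step eliminating coordinates one block at a time.

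For the base case $r=1$, I would choose three $3$-colorings $X^i(n) = (n+i) \bmod 3$ for $i \in \{0,1,2\}$, which are pairwise almost disjoint (in fact disjoint everywhere). By the full-projection hypothesis, pick $Y^i \in 2^\omega$ with $(X^i,Y^i) \in Q$ for each $i$. The crucial combinatorial fact is that no three $2$-colorings can be pairwise almost disjoint: if $Y^0,Y^1$ differ almost everywhere and $Y^1,Y^2$ differ almost everywhere, then $Y^0,Y^2$ agree on a cofinite set, so by pigeonhole one of their common color classes is infinite, and $Y^0,Y^2$ are not almost disjoint. Hence some pair $(Y^i,Y^j)$ is not almost disjoint, and $((X^i,Y^i),(X^j,Y^j))$ witnesses the lemma.

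For the inductive step, assume the lemma for all $r' < r$, and argue by contradiction: suppose that for every $(X^0,Y^0),(X^1,Y^1) \in Q$ with $X^0,X^1$ almost disjoint, some $s < r$ has $Y^0_s,Y^1_s$ almost disjoint. Fix $(X^0,Y^0) \in Q$. For each $X$ almost disjoint from $X^0$ and each $Y \in Q_X$, define the discrepancy vector $d(Y^0,Y) \in 2^r$ by $d_s = 1$ iff $Y^0_s,Y_s$ are almost disjoint; by assumption $d \ne 0$. Since there are uncountably many admissible $X$ and only finitely many nonzero vectors in $2^r$, a pigeonhole argument yields a vector $v^* \ne 0$ realized by a large family $\mathcal{F}$ of such $X$, each with a chosen witness $Y^X \in Q_X$ satisfying $d(Y^0, Y^X) = v^*$. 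Set $T = \{s : v^*_s = 1\}$, which is nonempty. For any $X^a,X^b \in \mathcal{F}$ that are almost disjoint from each other, both $Y^a_s$ and $Y^b_s$ equal $\overline{Y^0_s}$ almost everywhere for $s \in T$, so they agree almost everywhere on $T$ and in particular are not almost disjoint there. Thus the only remaining obstructing coordinates lie in $T^c$, and since $|T^c| < r$, one can apply the inductive hypothesis to a suitably defined $\Pi^0_1$ subclass $Q' \subseteq 3^\omega \times (2^\omega)^{T^c}$ reflecting the restriction of $Q$ to $\mathcal{F}$ and to the coordinates in $T^c$, yielding a pair in $\mathcal{F}$ whose full discrepancy is zero, contradicting the assumption.

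The main obstacle will be the inductive step, specifically realizing $\mathcal{F}$ and the class $Q'$ as a genuine $\Pi^0_1$ class with full projection on a suitable copy of $3^\omega$. The natural definition of $\mathcal{F}$ is only $\Sigma^1_1$, so one needs to replace the $\Sigma^0_2$ condition ``$Y_s, Y^0_s$ almost disjoint'' by its closed strengthening ``$Y_s(n) \ne Y^0_s(n)$ for all $n$'' on the coordinates $s \in T$, together with ``$X(n) \ne X^0(n)$ for all $n$'' on the $X$-side, and then verify that this stronger $\Pi^0_1$ class still has full projection on (the requisite portion of) $3^\omega$. This verification is the delicate point: it presumably demands a basis-theorem or forcing-style argument on the tree of $Q$, exploiting the full-projection hypothesis to promote the ``almost everywhere'' constraints given by the contradiction assumption to the ``everywhere'' constraints needed for the inductive application.
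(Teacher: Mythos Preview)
Your base case $r=1$ is correct and clean. The inductive step, however, has a structural gap that you yourself flag but do not close, and I do not see how it can be closed along the lines you sketch.

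The difficulty is not merely that verifying full projection is ``delicate''; the geometry is wrong. Once you fix $(X^0,Y^0)$ and restrict to $X$'s that are (everywhere) disjoint from $X^0$, the space of such $X$'s is canonically $2^\omega$, not $3^\omega$: at each $n$ there are exactly two values avoiding $X^0(n)$. So whatever $Q'$ you write down naturally lives in $2^\omega\times(2^\omega)^{|T^c|}$, and the inductive hypothesis, which is about $3^\omega$, does not apply. Worse, the $2^\omega$ analogue of the lemma is outright false (take $Q=\{(X,X):X\in 2^\omega\}$), so there is no cheaper statement to fall back on. Independently, your family $\mathcal{F}$ is defined via a non-canonical choice of $Y^X$ for each $X$; even after strengthening ``almost disjoint'' to ``everywhere disjoint'' this does not give a closed set, and the contradiction hypothesis tells you nothing about \emph{which} $Y\in Q_X$ realizes the vector $v^*$, so there is no reason full projection survives the restriction.

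The paper avoids induction on $r$ entirely. It constructs $((X^0,Y^0),(X^1,Y^1))$ directly by a forcing with finite conditions $((\rho^0,\sigma^0),(\rho^1,\sigma^1))$, where each $(\rho^i,\sigma^i)$ is \emph{good} in the sense that every $X\succ\rho^i$ still has a companion $Y\succ\sigma^i$ with $(X,Y)\in Q$. One extends so that the $\rho$-sides never gain a common color value (ensuring $X^0,X^1$ end up almost disjoint) while forcing the $\sigma$-sides to repeatedly gain a common value on each coordinate $s<r$. The engine is a locking claim: if a condition cannot make positive progress on coordinate $s$, then in fact there is a single $Y^i_s\in 2^\omega$ such that every good extension of $(\rho^i,\sigma^i)$ has its $s$-component lying along $Y^i_s$. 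Passing to a good pair that locks a maximal set of coordinates, and starting both sides of the condition from that same pair, one shows no further coordinate can ever be excluded, so positive progress is always available. This locking mechanism has no analogue in your outline and is what replaces the missing inductive step.
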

The proof is given in section \ref{limitsec0} Lemma \ref{limitlem1}.
Lemma \ref{limitlem9} is interesting in its own and 
further rises two questions:
\begin{enumerate}
\item How complex does  $Q$ has to be in order to maintain the cross constraint,
i.e, \begin{align}\nonumber
&\text{for every }(X^0,Y^0),(X^1,Y^1)\in Q,
\text{ if }X^0,X^1\text{ are almost disjoint}, \\ \nonumber
&\text{ then }
Y^0_s,Y^1_s\text{ are almost disjoint for some }s<r.
\end{align}

\item When $Q$ is a $\Pi_1^0$  class,  how weak can the two members,
violating the constraint, be. For example,
can they be low?

\end{enumerate}

\ \\

\noindent\textbf{Complexity of the cross constraint.}
Note that when $r=1$,
the conclusion follows obviously and the reason
is ``finitary": there are three $3$-colorings
that are mutually almost disjoint while for every
three $2$-colorings, two of them are not  almost disjoint.

On the other hand,
 when $r>1$, for every finitely many $3$-colorings
$X^0,\cdots,X^{n-1}$,
there exist some $2$-colorings
$Y^0,\cdots,Y^{n-1}$
such that for every $m\ne m'<n$,
$X^m,X^{m'}$ being almost disjoint implies
that there exists $s<r$
 such that $Y^m_s,Y^{m'}_s$ are almost disjoint.
 Even if we let two players play this in a game,
 the party who wants to maintain the constraint  has
 a winning strategy. i.e.,
 The game is an infinite sequence $X^0,Y^0,X^1,Y^1,\cdots$
 where A presents $X^m$ then B presents $Y^m$;
 B wins iff for every $m\ne m'$,
 $X^m,X^{m'}$ being almost disjoint implies
that there exists $s<r$
 such that $Y^m_s,Y^{m'}_s$ are almost disjoint.
 It's easy to see that B has a winning strategy.
 But does B has a winning strategy without looking at
 the history of the game? i.e., Does there exists a function
 $f:3^\omega\rightarrow (2^\omega)^r$ such that
 for every $X^0,X^1$ being almost disjoint, there exists
 an $s<r$ such that the $s^{th}$ component of $f(X^0),f(X^1)$ are
 almost disjoint.
  Lemma \ref{limitlem1} says that such function $f$,
  if exists, can not be $\Delta_2^0$.
  We will further address
  these questions in section \ref{limitsec1}.


\ \\

\noindent\textbf{The weakness of the witnesses.}
Clearly, weakening the two members in Lemma \ref{limitlem9} 
give rise to constraint version basis theorem for $\Pi_1^0$ class.
For example, the assertion that  the two  members 
does not compute a given incomputable Turing degree, is 
the constraint version cone avoidance for $\Pi_1^0$ class;
the assertion that they (together) are low, is the constraint 
version low basis theorem.
Exploring the combinatorial nature of $Q$, we   prove the constraint
version cone avoidance as a demonstration how these basis theorems are proved.
We also introduce some similar constraint version basis ``theorem",
among which some are unknown.

Despite interesting in its own, the cross constraint version basis theorem
is also motivated by a question of Cholak, Dzhafarov, Hirschfeldt and Patey \cite{cholak2019some}.
Let $\msf{D}^2_k$ denote the problem that an instance 
is a coloring $C:[\omega]^2\rightarrow k$ 
that is stable, i.e., for every $n\in\omega$, 
there exists a $j\in k$ such that $C(\{n,m\}) = j$
for almost all $m\in\omega$;
a solution to $C$ is an infinite set $G$ monochromatic for $C$.
We say $\msf{P}_0$ is \emph{ computably reducible} 
to $\msf{P}_1$ iff: for every $\msf{P}_0$ instance $I_0$,
$I_0$ computes a $\msf{P}_1$ instance $I_1$ such that 
for every solution $G_1$ of $I_1$, $I_0\oplus G$ compute
a solution of $I_0$. 

\begin{question}\label{limitques0}
Is $\msf{D}^2_3$ computably reducible to $\msf{D}^2_2\times \msf{D}^2_2$.
\end{question}

By relativization, this question boils down to:
is there a $\Delta_2^0$ $\msf{RT}_3^1$ instance
$C$ such that for every $\Delta_2^0$ $\msf{RT}_2^1$ instances
$C_0,C_1$, there exists a solution $\v G$ to 
$(C_0,C_1)$ such that $\v G$ does not compute 
a solution to $C$.
In another word, this impose restriction on both the encoding instance
and the encoded instance. Therefore, it is neither
a strengthen or a weakening of the question 
whether $\msf{RT}_3^1$ is soc-reducible to $\msf{RT}_2^1\times \msf{RT}_2^1$.
However, we strength the result $\msf{RT}_3^1\nleq_{soc} (\msf{RT}_2^1)^{<\omega}$ by
showing, in Theorem \ref{limitth4}, that there is a $\Delta_2^0$
$\msf{RT}_3^1$ instance as a witness, answering 
question \ref{limitques0} in negative.
It turns out this strengthen can be proved by weakening 
the two members of Lemma \ref{limitlem9} in certain ways,
i.e., to preserve a sort of hyperimmune.

\ \\

\noindent\textbf{Related literature.}

We review some literature in computability theory
concerning weakness of a problem.
The are mainly two types of these questions,
the first one consider instance of certain complexity; while
the second one consider arbitrary instance.

In reverse math, 
it is established that there are five  axioms
where each is equivalent to many natural mathematical theorems
(see \cite{Simpson2009Subsystems} for an introduction on reverse math).
Among the five axioms, Recursive comprehension (or
$\msf{RCA}$ for short) is the weakest. For certain 
type of mathematical theorems
(those can be written as $\forall X\exists Y \psi(X,Y)$
where $X$ is seen as an instance and $Y$ is seen as a solution 
to $X$) $\msf{P}_0,\msf{P}_1$, to prove $\msf{P}_1$
does not imply $\msf{P}_0$ (over $\msf{RCA}$), it almost always boils down
to constructing weak solution of a computable instance of $\msf{P}_1$
(see \cite{Hirschfeldt2015Slicing} for recent development in reverse math).
For example, Seetapun and Slaman \cite{Seetapun1995strength} proved 
$\msf{RT}_2^2$ does not imply $\msf{RT}_2^3$ 
by showing that every computable instance of $\msf{RT}_2^2$
admits a solution that does not compute a given incomputable degree.

In computability randomness theory, there are  questions
concerning how to extract randomness by certain combinatorial notions $\msf{P}$.
Since not able to extract randomness is a sort of weakness,
these questions sometimes boils down to construct weak solution 
of $\msf{P}$ instance
of certain complexity.
For example, Greenberg and Miller \cite{greenberg2011diagonally}
proved that there exists a $\msf{DNR}_h$
that does not compute any $1$-random real.
Kjos-Hanssen asked whether every $1$-random real $X\in 2^\omega$
admit an infinite subset that does not compute any $1$-random real.
Thinking of ``subset" as a combinatorial notion where the solution
is an infinite subset of the instance, this is a question
concerning $1$-random instance of ``subset" problem.
The question is answered through a sequence of papers
\cite{Kjos-Hanssen2009Infinite}\cite{kjos2019extracting}.

Sometimes, studying the $\msf{P}$ instance of certain complexity 
translates to study an arbitrary instance of another problem.
For example, to study computable $\msf{RT}_2^2$ instance,
Seetapun and Slaman \cite{Seetapun1995strength} (and later Cholak, Jocksuch and Slaman 
\cite{Cholak2001strength}) applied so called CJS-decomposition to reduce 
$\msf{RT}_2^2$ to $\msf{SRT}_2^2$ and $\msf{COH}$.
It turns out, any computable $\msf{SRT}_2^2$ instance
is coded by a $\Delta_2^0$ $\msf{RT}_2^1$ instance.
Meanwhile, almost every avoidance results concerning $\Delta_2^0$
$\msf{RT}_2^1$ instance generalize to the strong avoidance version. 
Here we list a few of other examples studying an instance of arbitrary complexity
and show how these research are related to reverse math.

\begin{itemize}
\item 
A well known simple result is that fast-grow-function
(hence forth $\msf{FGF}$)
admit strong cone avoidance for non hyperarithmetic Turing degree.
i.e., Given a function  $f\in\omega^\omega$ there exists 
a function $g\geq f$ such that $g$ does not compute
a given non hyperarithmetic Turing degree.

\item A major question in reverse math
was whether $\msf{SRT}_2^2$ implies $\msf{COH}$.
This question boils down to constructing certain weak 
solution of a computable $\msf{SRT}_2^2$ instance.
In \cite{Dzhafarov2016Ramseys}, Dzhafarov, Patey, Solomon and Westrick considers arbitrary instance of $\msf{SRT}_2^2$,
and studied whether $\msf{COH}$ is soc-reducible to $\msf{SRT}_2^2$.
 They proved that 
$\msf{RT}_2^1$ is not soc-reducible to $\msf{FGF}$;
actually they proved that $\msf{RT}_{k+1}^1$ is not 
soc-reducible to $\msf{FGF}\times \msf{RT}_k^1$,
thus $\msf{SRT}_2^2$ is not soc-reducible to $\msf{COH}$
since $\msf{SRT}_2^2$ is soc-equivalent to
$\msf{RT}_2^1\times\msf{FGF}$.

Chong, Slaman and Yang employ the nonstandard model
answering the  $\msf{SRT}_2^2$ vs $\msf{COH}$ question 
in negative.
Later, Monin and Patey \cite{monin2019weakness} study the jump control of 
an arbitrary $\msf{RT}_2^1$ instance, this approach finally
solve the
 standard model version of the question.
 More recently, Monin and Patey proved strong hyperarithmetic
 and arithmetic
 avoidance of $\msf{RT}_2^1$. Their method also
 answers some questions of Wang \cite{Wang2014Cohesive}
 and generalize the low$_2$ construction
 in \cite{Cholak2001strength}.

\item An old question of Sacks asked whether 
there is a solution to a the universal $\msf{DNR}$ instance
of minimal degree, which is answered in affirmative 
by Kumabe \cite{Kumabe2009fixed} using bushy tree method.
Later, people start to wonder whether there is a real of positive 
hausdorff dimension that is of minimal degree.
These reals are coded by
$\msf{DNR}_h$ for some    computable non decreasing unbounded function $h$.
In \cite{khan2017forcing}\cite{liu2019avoid},  it is shown that 
for an arbitrary $\msf{DNR}_h$ instance admit minimal degree solution
if $h$ is fast enough.
Hopefully, reducing the fast growing condition of $h$ could finally solve the question.

\item A major question in reverse math was whether 
$\msf{RT}_2^2$ implies 
$\msf{WKL}$, where $\msf{WKL}$ is the weak K$\ddot{o}$nig Lemma. In \cite{Dzhafarov2009Ramseys},
Dzhafarov and Jockusch proved strong cone avoidance of $\msf{RT}_2^1$.
This is later generalized by author \cite{Liu2012RT22} to strong avoidance of PA degree, answering
the question negatively.

\item Downey, Greenberg, Jockusch and Milans \cite{downey2011binary} proves that
$\msf{DNR}_2$ is not  soc-reducible
to $\msf{DNR}_3$ in a uniform manner, i.e., given
a Turing function $\Psi$, there always exists
a $\msf{DNR}_2$ instance $X$ such that for every $\msf{DNR}_3$
instance $\h X$, there is a solution to $\h X$ that  does not compute
 a solution to $X$.

\end{itemize}

\ \\

\noindent\textbf{Organization.}
In section \ref{limitsec0} we prove Lemma \ref{limitlem9}
and demonstrate how
$\msf{RT}_3^1\nleq_{soc}(\msf{RT}_2^1)^{<\omega}$
can be proved using Lemma \ref{limitlem9}.
Section \ref{limitsec1}
discuss how complex does the set $Q$ in Lemma \ref{limitlem9}
need to be in order to satisfy the cross constraint.
In section \ref{limitsec2}, we presents some 
constraint version basis theorem; among them is the preservation 
of $\Gamma$-hyperimmune of a $3$-coloring (see section \ref{limitsubsec2}
for a definition). 
In section \ref{limitsec3}, we introduce some results
and questions on product on infinitely many colorings.


\ \\

\noindent\textbf{Notations.}
For a sequence of poset $(W_0,<_{p_0}),\cdots,(W_{n-1},<_{p_{n-1}})$,
and $(x_0,\cdots,x_{n-1})$ ,
$(y_0,\cdots,y_{n-1})\in W_0\times\cdots\times W_{n-1}$,
we write $(y_0,\cdots,y_{n-1})\leq_p$
$(<_p\text{ respectively})$
$(x_0,\cdots,x_{n-1})$
if $y_m\leq_{p_m}
(<_{p_m} \text{ respectively}) x_m$ for all $m<n$.

We use $\sigma,\tau,\rho,\xi$ to denote strings of finite length.
 For a string $\rho$, we let $[\rho]^{\preceq}=\{\sigma: \sigma\succeq \rho\}$;
similarly, for a set $S$ of strings, let $[S]^{\preceq} = \{\sigma: \sigma\succeq \rho\text{ for some }\rho\in S\}$.
For a tree $T$, let $[T]$ denote the set of infinite paths on $T$;
for a string $\rho$,
 let $[\rho]= \{X\in \omega^\omega:X\succeq \rho\}$;
 for a finite set $V$ of strings, let $[V]=\{[\rho]:\rho\in V\}$.
 For a string $\rho$, we use $|\rho|$ to denote the length of $\rho$.
 For a set $S$ of strings, let $\ell(S)$
 denote the set of leaves of $S$ (i.e., the set of strings in $S$
 having no proper extension in $S$ when $S\ne\emptyset$)
 and we rules $\ell(\emptyset) = \bot$.

Two $k$-colorings $X_0,X_1\in k^\omega$
are \emph{almost disjoint on }$Z\subseteq \omega$ if for every
$j\in k$, $X_0^{-1}(j)\cap X_1^{-1}(j)\cap Z$
is finite; when $Z=\omega$, we simply say
$X_0,X_1$ are almost disjoint.

\section{Weakness of a product of partitions}
\label{limitsec0}
The main objective in this section is to address the 
question whether $\msf{RT}_3^1$ is soc-reducible to 
$(\msf{RT}_2^1)^r$.
Let $(\msf{RT}_2^1)^r$
denote the problem whose instance is an $r$-tuple $(C_0,\cdots, C_{r-1})$ of $2$-colorings,
a solution to $(C_0,\cdots,C_{r-1})$ is an $r$-tuple of infinite sets of integers
$(G_0,\cdots,G_{r-1})$ such that $G_s$ is monochromatic for $C_s$ for all $s<r$.
Let $(\msf{RT}_2^1)^{<\omega}$ denote 
$\bigcup_r (\msf{RT}_2^1)^r$.

\begin{theorem}\label{limitth0}
We have $\msf{RT}_3^1\nleq_{soc}
(\msf{RT}_2^1)^{<\omega}$. Moreover, there exists a $\emptyset^{(\omega)}$-computable
 $\msf{RT}_3^1$ instance  
witnessing the conclusion.

\end{theorem}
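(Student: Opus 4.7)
The plan is to build a $\emptyset^{(\omega)}$-computable $X \in 3^\omega$ by a stage-wise forcing argument whose conditions are finite approximations $\sigma \in 3^{<\omega}$ to $X$. I enumerate the pairs $(r, e) \in \omega \times \omega$, dovetail, and at stage $\langle r, e\rangle$ extend the current $\sigma$ to $\sigma'$ so that, for every $X \succeq \sigma'$ and every $Y \in (2^\omega)^r$, some solution $\vec G$ to $Y$ has $\Phi_e^{\vec G}$ failing to be a solution to $X$. Meeting every such stage yields an $X$ witnessing $\msf{RT}_3^1 \nleq_{soc} (\msf{RT}_2^1)^{<\omega}$.

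To show the desired $\sigma'$ always exists, I argue by contradiction. If no such $\sigma'$ does, then the class
\[
 Q = \bigl\{(X, Y) \in 3^\omega \times (2^\omega)^r : X \succeq \sigma \text{ and for every solution } \vec G \text{ to } Y,\ \Phi_e^{\vec G} \text{ is an infinite monochromatic set for } X\bigr\}
\]
has dense projection on $[\sigma] \cap 3^\omega$. Although $Q$ itself has complexity above $\Pi_1^0$, a Mathias-style refinement $\widehat Q \subseteq Q$ that replaces the ``$\forall \vec G$ solution'' clause by a ``$\forall$ Mathias condition consistent with $Y$'' clause together with a forcing statement controlling the behavior of $\Phi_e$ is $\Pi_1^0$ and retains the full-projection property required by Lemma \ref{limitlem9}.

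Applying Lemma \ref{limitlem9} to $\widehat Q$ yields $(X^0, Y^0), (X^1, Y^1) \in \widehat Q$ with $X^0, X^1$ almost disjoint and each $Y^0_s, Y^1_s$ not almost disjoint. For each $s < r$ pick a color $c_s \in \{0, 1\}$ with $(Y^0_s)^{-1}(c_s) \cap (Y^1_s)^{-1}(c_s)$ infinite, and let $G_s$ be an infinite subset of this intersection; then $\vec G = (G_0, \ldots, G_{r-1})$ is simultaneously a solution to $Y^0$ and $Y^1$. By the $\widehat Q$-clause, $H := \Phi_e^{\vec G}$ is an infinite monochromatic set for both $X^0$ (say color $j_0$) and $X^1$ (color $j_1$). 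If $j_0 = j_1 = j$, then $H \subseteq (X^0)^{-1}(j) \cap (X^1)^{-1}(j)$ is finite by almost disjointness, yielding the sought contradiction.

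The main obstacle is the cross-color case $j_0 \ne j_1$, where $H$ may innocently live in the infinite cross-cell $(X^0)^{-1}(j_0) \cap (X^1)^{-1}(j_1)$. I will handle it by refining the stages and the class by an additional ``intended output color'' $j \in \{0, 1, 2\}$: the color-pinned refinement $\widehat Q_{r, e, j}$ asks that $\Phi_e^{\vec G} \subseteq X^{-1}(j)$ for every solution $\vec G$, and applying Lemma \ref{limitlem9} to it pins $j_0 = j_1 = j$ automatically so the contradiction goes through as before. Combining the three color-pinned sub-stages into a single $(r, e)$-stage is delicate but feasible by a joint forcing construction, and ensuring each of the three $\widehat Q_{r, e, j}$ has the requisite large projection (when $Q$ does) is the primary technical point to verify. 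Finally, $\emptyset^{(\omega)}$-computability of $X$ follows because each stage poses an arithmetic question of bounded complexity answerable uniformly by a finite jump $\emptyset^{(k)}$, with $k$ growing with the stage index, so the full construction fits within $\emptyset^{(\omega)}$.
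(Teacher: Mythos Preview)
Your plan has a quantifier-order problem that makes it fall short of the theorem. Witnessing $\msf{RT}_3^1 \nleq_{soc} (\msf{RT}_2^1)^{<\omega}$ by a $3$-coloring $C$ means: for every $Y$ there is a \emph{single} solution $\vec G$ to $Y$ such that \emph{for all} $e$, $\Phi_e^{\vec G}$ is not a solution to $C$. Your stage $\langle r,e\rangle$ only arranges that for each $Y$ some $\vec G$ defeats that one functional $\Phi_e$; nothing glues the $\vec G$'s for different $e$ into one. Meeting all stages therefore yields only $\forall Y\,\forall e\,\exists \vec G$, not the required $\forall Y\,\exists \vec G\,\forall e$, so the final sentence of your first paragraph does not follow. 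The paper resolves this by building on the other side: $C$ is fixed once and for all to be hyperimmune relative to every arithmetic degree, and then for each given $Y=(C_0,\ldots,C_{r-1})$ one runs a Mathias-type forcing with conditions $(\{\vec F_{\mbk}\}_{\mbk\in 2^r},\vec X)$ to produce a single $\vec G$. Each step of that forcing handles one $2^r$-tuple of requirements $\bigvee_{\mbk}\mcal{R}^{j_{\mbk}}_{\Psi_{\mbk}}$, and it is \emph{inside that step} that Lemma~\ref{limitlem9} is invoked (when the associated $\Pi^0_1$ class $Q_n$ has full projection on $3^\omega$), while the hyperimmunity of $C$ handles the complementary case $Q_n\ne 3^\omega$.

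Even setting that aside, the contradiction argument has gaps. The failure of your stage gives only \emph{dense} projection of $Q$ on $[\sigma]$, and your passage to an unspecified $\Pi^0_1$ refinement $\widehat Q$ does not automatically upgrade this to full projection (indeed the natural Mathias reformulation enlarges $Q$ rather than shrinks it, and its projection is then closed --- but you wrote $\widehat Q\subseteq Q$). Your color-pinning uses a single $j\in 3$, whereas in the paper the colors form a whole tuple $(j_{\mbk})_{\mbk\in 2^r}$ tied to the $2^r$ branches of the Mathias condition; with a single pinned $j$ there is no reason the restricted class should still have large projection, which you acknowledge as ``the primary technical point'' but do not address. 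Finally, the clause ``$\Phi_e^{\vec G}$ is infinite'' in your $Q$ is $\Pi^0_2$ and does not drop out under a Mathias reformulation; the paper's $Q_n$ never asks for infinitude, and the contradiction produced via Lemma~\ref{limitlem9} is precisely that $\Psi_{\mbk^*}^{\vec G}$ is \emph{finite}.
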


The rest of this section will prove
Theorem \ref{limitth0}.
Let's firstly briefly discuss the framework of its proof.
First, we fix a somewhat complex $\msf{P}_0$ instance $I_0$ and an arbitrary
 $\msf{P}_1$ instance $I_1$. The goal is to construct a solution of 
 $I_1$ that does not compute any solution of 
 $I_0$.
 The general approach is to construct  a sequence of
conditions $d_0,d_1,\cdots$:
\begin{itemize}
\item where each condition
is essentially a closed set of
candidates of the weak solution we construct,

\item each requirement is forced by some condition
(meaning every member of the condition satisfy the requirement).
\end{itemize}
Then we 
take the common element $G$ of $d_0,d_1,\cdots$
(which exists by compactness),
so it satisfies all requirements. Thus $G$ is the desired 
weak solution of $I_1$
 not computing any solution of $I_0$. 

The main point in this approach is how to extend a condition 
in order to force a given requirement. 
Usually, this turns out to be a uniform encoding question.
i.e., whether for every instance of the encoded problem, 
there  is an instance of the encoding problem so that every solution 
of the encoding instance compute, \emph{via a given algorithm}, a solution
of the given encoded instance.
In  particular, in theorem \ref{limitth0}, the uniform encoding 
question is exactly the uniform version of the original encoding question
(which might not be the case in general).
i.e., roughly speaking, it boils down to the following:
let $C$ be the complex $\msf{RT}_3^1$ instance; 
\begin{align}\label{limiteq19}
&\text{ given a tuple of Turing functional }\{\Psi_{\mbk}\}_{\mbk\in 2^r},
\ \ \text{ a tuple of colors }\{j_{\mbk}\}_{\mbk\in 2^r}\subseteq 3,
\\ \nonumber
&\text{   a }(\msf{RT}_2^1)^r \text{ instance }(C_0,\cdots,C_{r-1}),
\\ \nonumber
 &\text{ there is a solution }\v G \text{ in color }\mbk\text{ of }(C_0,\cdots,C_{r-1})\text{ for some }\mbk
 \text{ such that }
 \\ \nonumber
 &\ \ \Psi^{\v G}\text{ is not a solution of }C\text{ in color }j_{\mbk}.
\end{align}

If $C$ is not encoded via $\{\Psi_{\mbk}\}_{\mbk\in 2^r},\{j_{\mbk}\}_{\mbk\in 2^r}$
in a fashion as (\ref{limiteq19}), then we can finitely extend the initial segment of a 
condition to force the Turing functional $\Psi_{\mbk}$ to 
violate $C$ deterministically.

On the other hand, if $C$ is encoded via 
$\{\Psi_{\mbk}\}_{\mbk\in 2^r},\{j_{\mbk}\}_{\mbk\in 2^r}$ in that way,
we observe the behavior of 
$\Psi_{\mbk}$ and  look at all $3$-colorings $\t C$ encoded 
in that way. 
It turns out that the encoded $\t C$ forms a $\Pi_1^0$ class.
Since the very complex $C$ is in it, this class contains
a ``lot" of $3$-colorings. Now
we look at the set of pairs $(\t C,\h C)$
where  $\t C$ is uniformly encoded by $\h C$ 
via
$\{\Psi_{\mbk}\}_{\mbk\in 2^r},\{j_{\mbk}\}_{\mbk\in 2^r}$ similarly as (\ref{limiteq19}).
We prove in Lemma \ref{limitlem1} that there are $(\t C^0, \h C^0),(\t C^1, \h C^1)$ 
such that 
\begin{itemize}
\item $\t C^0,\t C^1$ has no common solution 
in any color $j\in 3$ and
\item  $\h C^0,\h C^1$ do have a common solution 
in some color $\mbk^*\in 2^r$.
\end{itemize}
Thus, let $\v G$ be that common solution of $\h C^0,\h C^1$.
We have $\Psi_{\mbk^*}^{\v G}\subseteq^* (\t C^0)^{-1}(j_{\mbk^*})\cap (\t C^1)^{-1}(j_{\mbk^*})$,
therefore it is a finite set.

More specifically, what we need is the following lemma.
Let $Q\subseteq 3^\omega\times (2^\omega)^r$ be
a $\Pi_1^0$ class that has full projection on $3^\omega$.
\begin{lemma}\label{limitlem1}
There exist $(X^0,Y^0),(X^1,Y^1)\in Q$
such that:
suppose $Y^i=(Y^i_0,\cdots,Y^i_{r-1})$, we have
 $X^0,X^1$ are almost disjoint
and $Y^0_s,Y^1_s$ are not almost disjoint for all $s<r$.
Moreover, $(X^0,Y^0)\oplus (X^1,Y^1)\leq_T \emptyset'$.
\end{lemma}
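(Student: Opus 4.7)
I plan to prove Lemma \ref{limitlem1} by an $\emptyset'$-effective forcing argument. The conditions track three parallel paths $(X^i,Y^i)\in Q$ for $i\in\{0,1,2\}$ with the first components $X^i$ pointwise pairwise distinct on a fixed tail (ensuring pairwise almost disjointness in the limit); the two paths claimed by the lemma are extracted from these three at the end. A condition is a triple $((\sigma^i,\tau^i))_{i<3}$ with $(\sigma^i,\tau^i)$ on the tree $T$ for $Q$, all of equal length, and $\sigma^i(n)\ne\sigma^j(n)$ for $i\ne j$ on the tail. Liveness---extendibility to a triple of paths in $Q$ satisfying the tail constraint---is $\Pi^0_1$ and hence checkable from $\emptyset'$. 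The requirements, besides length, are: for each $s<r$ and $N\in\omega$, the requirement $R^s_N$ that some pair $(i,j)$ has $\tau^i_s(n)=\tau^j_s(n)$ for some $n>N$.

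The combinatorial heart is an \emph{extension lemma}: every live condition can be extended to force $R^s_N$ at some pair. I would prove it by contradiction. Suppose that for every pair $(i,j)$, no extension of the condition has $\tau^i_s(n)=\tau^j_s(n)$ for some $n>N$. Then in every live extension, $Y^j_s(n)=1-Y^i_s(n)$ for all $n>N$. Fixing $(X^i,Y^i)$ and varying $X^j$ among extensions of $\sigma^j$ disjoint from $X^i$ on the tail (valid by the full projection of $Q$), then varying $Y^i\in Q_{X^i}$, then varying $X^i$, one derives the rigidity: there exists $\alpha^i\in 2^\omega$ such that $Y_s(n)=\alpha^i(n)$ for every $n>N$ and every $(X,Y)\in Q$ with $X\succeq\sigma^i$ and $Y\succeq\tau^i$. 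Applying this argument to each of the three pairs $(0,1),(0,2),(1,2)$ yields the relations $\alpha^0(n)=1-\alpha^1(n)$, $\alpha^0(n)=1-\alpha^2(n)$, and $\alpha^1(n)=1-\alpha^2(n)$ for all $n>N$; combining these gives $\alpha^0(n)=1-\alpha^0(n)$ for $n>N$, a contradiction. This combinatorial core exploits essentially that three colors leave no room for a three-way complementation to close up.

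With the extension lemma in hand, I build the three paths via $\emptyset'$-forcing: enumerate all requirements and at each stage use $\emptyset'$ to find a live extension forcing the next requirement at some pair. To obtain a single pair $(i^*,j^*)$ witnessing the lemma simultaneously for all coordinates, I would prioritize a designated target pair and only switch when rigidity on the target pair is detected; the three-way argument ensures such switches are controlled so that, with careful bookkeeping, one pair remains good for all requirements in the limit. This yields $(X^{i^*},Y^{i^*}),(X^{j^*},Y^{j^*})\in Q$ with their join computable from $\emptyset'$, which after relabeling gives the claimed pair $(X^0,Y^0),(X^1,Y^1)\le_T\emptyset'$. The main obstacle is this two-to-three-to-two extraction---ensuring that a single pair rather than different pairs for different coordinates is the witness---which requires careful priority management in the $\emptyset'$-forcing to handle the rigidity events for the target pair.
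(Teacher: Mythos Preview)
Your extension lemma is correct, and in fact simpler than your sketch suggests: in any live triple extended past $N$, the three values $\tau^0_s(n),\tau^1_s(n),\tau^2_s(n)$ lie in $\{0,1\}$ and cannot be pairwise distinct, so some pair already agrees at $n$. No rigidity derivation is needed for that step.

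The genuine gap is the extraction, which you flag as the main obstacle but do not resolve. Your construction ensures that for each $(s,N)$ \emph{some} pair agrees beyond $N$, hence for each $s$ some pair $(i_s,j_s)$ is not almost disjoint on component $s$; but nothing forces these pairs to coincide across $s$. Pigeonhole over three pairs and $r$ components does not help: it is perfectly consistent that each of the three pairs fails on one distinct component. Your priority idea---switch the target pair upon detecting rigidity---is not a fix. First, the rigidity you would need to detect is per-pair, and for that your varying-$X^j$ argument requires the stronger per-side \emph{goodness} (for every $X\succeq\sigma^i$, $Q_X\cap[\tau^i]\ne\emptyset$), not mere liveness of the triple; your appeal to ``full projection of $Q$'' only yields $Q_X\ne\emptyset$. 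Second, even granting detection, switching the target pair does not retroactively secure for the new pair the components already handled for the old one; there is no evident finite-injury structure that makes this converge.

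The paper avoids extraction entirely by working with two paths and a pre-processing step. Call a good pair $(\rho,\sigma)$ a \emph{lock} for component $s$ if there is a fixed $Y\in 2^\omega$ such that every good extension of $(\rho,\sigma)$ has its $s$-th coordinate along $Y$. One chooses a \emph{maximal} set $I\subseteq r$ of components simultaneously lockable by a single good pair $(\rho,\sigma)$ and starts both sides of the two-path condition at $(\rho,\sigma)$. The paper's Claim (the two-path analogue of your rigidity) shows that if a condition cannot be extended to progress on component $s$ while keeping the $X$-sides disjoint, then each side locks $s$. But each side is a good extension of $(\rho,\sigma)$, hence already locks all of $I$; so one side would lock $I\cup\{s\}$, contradicting maximality of $I$. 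Thus from this starting point every component can always be progressed for the \emph{same} two paths, and no extraction is needed.
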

\begin{proof}
The approach is to build two sequences
of pairs $(\rho^i_0,\sigma^i_0)\prec (\rho^i_1,\sigma^i_1)\prec\cdots$,
$i\in 2$ such that the colorings $X^i=\cup_t \rho^i_t$,
$Y^i = \cup_t \sigma^i_t = (Y^i_0,\cdots, Y^i_{r-1})$
are the desired witness.

For $\rho\in 3^{<\omega}$,
$\sigma\in 2^{<\omega}$,
 $(\rho,\sigma)$ is \emph{\good}
iff
for every $X\in [\rho]$,
 $\QX\cap [\sigma]\ne\emptyset$
 where $\QX=\{Y\in (2^\omega)^r: (X,Y)\in Q\}$.
 It's easy to see that
 \begin{align}\label{limiteq1}
 &\text{ for every good pair }
 (\rho,\sigma),\text{ every }\h\rho\succeq \rho,
 \\ \nonumber
 &\text{ there exists a good pair }
 (\t\rho,\t\sigma)\succ(\h\rho,\sigma).
 \end{align}

A \emph{condition} in this lemma
is a $2$-tuple $((\rho^0,\sigma^0), (\rho^1,\sigma^1))$
such that  $(\rho^i,\sigma^i)\in 3^{<\omega}\times 2^{<\omega}$ is good for all $i\in 2$.
We will try to build the desired two sequences by
$\emptyset'$-computing a sequence of condition which
fulfill all necessary requirements.
More specifically, for each condition
$((\rho^0_t,\sigma^0_t), (\rho^1_t,\sigma^1_t))$
in the sequence, try to extend it to
 $((\rho^0_{t+1},\sigma^0_{t+1}), (\rho^1_{t+1},\sigma^1_{t+1}))$
 so that 
 \begin{itemize}
 \item $(\rho^0_{t+1},\rho^1_{t+1})$ does not \emph{positively progress
 on any color} $j\in 3$ \emph{ compared to }$(\rho^0_t,\rho^1_t)$,
 $$\text{i.e., }(\rho^0_{t+1})^{-1}(j)\cap (\rho^1_{t+1})^{-1}(j)
 = (\rho^0_{t})^{-1}(j)\cap (\rho^1_{t})^{-1}(j)
 \text{ for all }j\in 3;$$
\item while for each $s<r$, for some $k\in 2$, $(\sigma^0_{t+1},\sigma^1_{t+1})$
\emph{positively progress on color $k$ on the $s^{th}$ component
compared to }$(\sigma^0_t,\sigma^1_t)$,
$$\text{ i.e., }(\sigma^0_{t+1,s})^{-1}(k)\cap (\sigma^1_{t+1,s})^{-1}(k)
\supsetneq (\sigma^0_{t,s})^{-1}(k)\cap (\sigma^1_{t,s})^{-1}(k).$$
Where $\sigma^i_t=(\sigma^i_{t,0},\cdots,\sigma^i_{t,r-1})$.
\end{itemize}
If we can keep positively progress on each component, then we are done.
We will show in Claim \ref{limitclaim0} that this is indeed
 the case.

A condition $((\rho^0,\sigma^0), (\rho^1,\sigma^1))$
\emph{\exclude} component $s$ if it cannot  be extended
to make positive progress on  the $s^{th}$ component.
More specifically, for every condition
$((\h{\rho}^0,\h{\sigma}^0), (\h{\rho}^1,\h{\sigma}^1))$
extending $((\rho^0,\sigma^0), (\rho^1,\sigma^1))$,
if $(\h{\rho}^0,\h{\rho}^1)$ does not positively progress on
color any color $j\in 3$
  compared to $(\rho^0,\rho^1)$, then $(\h{\sigma}^0,\h{\sigma}^1)$
does not positively progress on any color
on the $s^{th}$ component compared to $(\sigma^0,\sigma^1)$.

\begin{claim}\label{limitclaim0}
If  condition $((\rho^0,\sigma^0), (\rho^1,\sigma^1))$
\exclude\ component $s$,
then there exists
two computable $Y^0,Y^1\in 2^\omega$
such that for every $i\in 2$, every \good\ pair
$(\h{\rho}^i,\h{\sigma}^i)\succeq
(\rho^i,\sigma^i)$,
we have $\h{\sigma}^i_s\prec Y^i$
 $($in which case we say $(\rho^i,\sigma^i)$
 \emph{lock} the $s^{th}$ component$)$.
\end{claim}
\begin{proof}
It suffices to prove that for every sufficiently large
$n\in \omega$,
there exists $k_0,k_1\in 2$ (depending on $n$)
such that for
every
 condition
$((\h{\rho}^0,\h{\sigma}^0), (\h{\rho}^1,\h{\sigma}^1))$
extending $((\rho^0,\sigma^0), (\rho^1,\sigma^1))$,
$\h{\sigma}^i_s(n)=k_i$ whenever $\h{\sigma}^i_s(n)$
is defined.
Suppose otherwise.
Then there exists an $i\in 2$ (say $i=0$) and
two \good\ pairs $(\h{\rho},\h{\sigma})$,
$(\t{\rho},\t{\sigma})$ extending
$ (\rho^0,\sigma^0)$
such that $\h{\sigma}_s(n)\ne\t{\sigma}_s(n)$
are defined.
Now extends $(\rho^1,\sigma^1) $ to a \good\ pair
$(\rho',\sigma')$ where $\sigma'_s(n)$ is defined
and neither of  $(\rho',\h{\rho}),(\rho',\t{\rho})$
 positively progress on any color $j\in 3$
compared to $(\rho^1,\rho^0)$.
But clearly, either
$(\sigma',\h{\sigma})$ or $(\sigma',\t{\sigma})$
will positively progress on some color $k\in 2$ on the
$s^{th}$ component compared to $(\sigma^1,\sigma^0)$
(depending on $\sigma'_s(n)=\h{\sigma}_s(n)$
or $\sigma'_s(n)=\t{\sigma}_s(n)$).
A contradiction to \exclude\ component $s$.

\end{proof}

Let $I\subseteq r$ be a maximal set such that
there exists a \good\ pair $(\rho,\sigma)$
lock $s^{th}$ component for all $s\in I$.
Starting with the condition
$((\rho,\sigma),(\rho,\sigma))$, it is clear to see that
for any condition $((\rho^0,\sigma^0), (\rho^1,\sigma^1))$
extending $((\rho,\sigma),(\rho,\sigma))$
with $(\rho^0,\rho^1)$  not positively progress on any
color $j\in 3$ compared to $(\rho,\rho)$, $((\rho^0,\sigma^0), (\rho^1,\sigma^1))$
does not \exclude\ any component.
For example, if $I=r$, then there even exists
computable $X^0,X^1\in 3^\omega, Y\in 2^\omega$
such that $(X^0,Y),(X^1,Y)\in Q$ are as desired, namely
$X^0,X^1$ are almost disjoint.
Thus we will be able to keep making positive progress on each
component.
Thus we are done.

\end{proof}

Now we are ready to prove Theorem \ref{limitth0}.
\begin{definition}[Hyperimmune] 
A $\msf{RT}_k^1$ instance
$C$ is \emph{hyperimmune} relative to $D$ if
for every $D$-computable array of
 $k$-tuple of mutually disjoint finite
sets $\{(F_{n,0},\cdots,F_{n,k-1})\}_{s\in\omega}$
with $\bigcup_{j<k} F_{n,j}>n$ for each $n$,
there exists an $n^*$ such that
$F_{n^*,j}\subseteq C^{-1}(j)$ for all $j<k$.

\end{definition}

\begin{proof}[Proof of Theorem \ref{limitth0}]
Let $C$ be a $\emptyset^{(\omega)}$-computable $\msf{RT}_3^1$ instance hyperimmune
 relative to any arithmetic degree;
 let $r\in \omega$ and let $(C_0,\cdots,C_{r-1})$
 be a $ (\msf{RT}_2^1)^r$
instance.

The \emph{condition} we use in this theorem
is a tuple $(\{\v{F}_{\mbk}\}_{\mbk\in 2^r}, \v{X})$
such that
\begin{itemize}
\item Each $\v{F}_{\mbk}=(F_{\mbk,0},\cdots,F_{\mbk,r-1})$
is an $r$-tuple of finite set in color $\mbk$ of $(C_0,\cdots,C_{r-1})$.
i.e., $F_{\mbk,s}\subseteq C_s^{-1}(\mbk(s))$ for all $s<r$.

\item $\v{X}=(X_0,\cdots,X_{r-1})$ is arithmetic; each
$X_s\subseteq \omega$ is infinite and $X_s>F_{\mbk,s}$
for all $\mbk\in 2^r,s<r$.

\end{itemize}
As in the Mathias forcing, 
let $(\v F_{\mbk},\v X)$ denote the
collection
\begin{align}\nonumber
\{(G_0',\cdots,G_{r-1}'):
F_{\mbk,s}\subseteq G_s'\wedge G_s'\subseteq F_{\mbk,s}\cup X_s
\text{ for all }s<r\}.
\end{align}

A condition $(\{\v{F}_{\mbk}\}_{\mbk\in 2^r}, \v{X})$ is seen as a collection of candidates of
the solution $(G_0,\cdots,G_{r-1})$ we construct. Namely:
$\bigcup_{\mbk\in 2^r}(\v F_{\mbk},\v X)$.

A condition $(\{\v{\h{F}}_{\mbk}\}_{\mbk\in 2^r}, \v{\h{X}})$
\emph{extends} another condition $(\{\v{F}_{\mbk}\}_{\mbk\in 2^r}, \v{X})$
(written as
$(\{\v{\h{F}}_{\mbk}\}_{\mbk\in 2^r}, \v{\h{X}})$
$\subseteq
(\{\v{F}_{\mbk}\}_{\mbk\in 2^r}, \v{X})$)
if the collection represented by $(\{\v{\h{F}}_{\mbk}\}_{\mbk\in 2^r}, \v{\h{X}})$
is a subset of that of $(\{\v{F}_{\mbk}\}_{\mbk\in 2^r}, \v{X})$. Or in another word,
$F_{\mbk,s}\subseteq \h{F}_{\mbk,s}\subseteq F_{\mbk,s}\cup X_s, \h{X}_s\subseteq X_s $ for all
$s<r,\mbk\in 2^r$.

Given a tuple of requirements $\{\mcal{R}_{\mbk}\}_{\mbk\in 2^r}$,
a condition $(\{\v{F}_{\mbk}\}_{\mbk\in 2^r}, \v{X})$
\emph{forces} $\bigvee_{\mbk\in 2^r}\mcal{R}_{\mbk}$
iff there \emph{exists} a $\mbk\in 2^r$ such that for
 every $(G_0,\cdots,G_{r-1})\in (\v{F}_{\mbk},\v{X})$, we have
$(G_0,\cdots,G_{r-1})$ satisfies $\mcal{R}_{\mbk}$.

For every Turing functional $\Psi$, every $j\in 3$,
let $\mcal{R}_\Psi^j$ denote the requirement:
$$\Psi^{(G_0,\cdots,G_{r-1})}
\text{ is not an infinite subset of } C^{-1}(j).$$
Fix a tuple of requirements  $\{\mcal{R}_{\mbk}\}_{\mbk\in 2^r}$,
where for some Turing functional
$\Psi_{\mbk}$, some $j_{\mbk}\in 3$,
$\mcal{R}_{\mbk}$ is $\mcal{R}_{\Psi_{\mbk}}^{j_{\mbk}}$;
fix a condition $(\{\v{F}_{\mbk}\}_{\mbk\in 2^r}, \v{X})$.
We will show that
there exists an extension of $(\{\v{F}_{\mbk}\}_{\mbk\in 2^r}, \v{X})$
that forces  $\bigvee_{\mbk\in 2^r}\mcal{R}_{\mbk}$.
In this way, we will have a sequence of conditions
$d_0\supseteq d_1\supseteq\cdots \supseteq d_t=(\{\v{F}_{\mbk}^t\}_{\mbk\in 2^r}, \v{X}^t)\supseteq\cdots$
such that  for every $2^r$-tuple of Turing functionals
$\{\Psi_{\mbk}\}_{\mbk\in 2^r}$ and every $2^r$-tuple of integers
$\{j_{\mbk}\}_{\mbk\in 2^r}$,
$\bigvee_{\mbk\in 2^r}\mcal{R}_{\Psi_{\mbk}}^{j_{\mbk}}$
is forced by some $d_t$.
Then there exists a $\mbk^*\in 2^r$ such that,
let $G_s= \cup_t F^{t}_{\mbk^*,s}$ for each $s<r$, we have
$(G_0,\cdots,G_{r-1})$ 
 satisfies all requirements $\mcal{R}_\Psi^j$.
This means
 $(G_0,\cdots,G_{r-1})$ does not compute a solution of $C$. 
 Clearly $(G_0,\cdots,G_{r-1})$ is in color $\mbk^*$ of $(C_0,\cdots,C_{r-1})$.
 It is automatic
that $G_s$ is infinite for all $s<r$ since for every $n$,
there are such Turing functional
$\Psi$ that for every $m$,
$\Psi^{(G_0,\cdots,G_{r-1})}(m)\downarrow=1$ if and only if $|G_s|>n$ for all $s<r$.

As we said, it remains to prove the following.
\begin{claim}\label{limitclaim5}
There exists an extension of
$(\{\v{F}_{\mbk}\}_{\mbk\in 2^r}, \v{X})$, forcing
$\bigvee_{\mbk\in 2^r}\mcal{R}_{\mbk}$.
\end{claim}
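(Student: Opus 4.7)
The plan is to force the disjunction by a dichotomy that reduces the nontrivial case to a relativized application of Lemma \ref{limitlem1}. Call $\mbk^{*} \in 2^r$ \emph{diagonalizable} from the current condition if there is an extension $\v F'$ of $\v F_{\mbk^{*}}$ with $F_{\mbk^{*},s} \subseteq F'_s \subseteq F_{\mbk^{*},s} \cup X_s$ for all $s$, together with some $n$ such that $\Psi_{\mbk^{*}}^{\v F'}(n) \downarrow = 1$ while $C(n) \ne j_{\mbk^{*}}$. If some $\mbk^{*}$ is diagonalizable, I replace $\v F_{\mbk^{*}}$ by $\v F'$ and shrink the reservoir to stay above $\v F'$; the resulting extension forces $\mcal R_{\mbk^{*}}$, because every $\v G$ in its $\mbk^{*}$-candidate set contains $n$ in the image of $\Psi_{\mbk^{*}}$ but $n \notin C^{-1}(j_{\mbk^{*}})$.

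Otherwise no $\mbk$ is diagonalizable, so every $\v F'$ with $F_{\mbk,s} \subseteq F'_s \subseteq F_{\mbk,s} \cup X_s$ and $F'_s \subseteq C_s^{-1}(\mbk(s))$ satisfies $\Psi_{\mbk}^{\v F'}(n) \downarrow = 1 \Rightarrow C(n) = j_{\mbk}$. Let $Q \subseteq 3^\omega \times (2^\omega)^r$ be the $\Pi_1^0$ class (relative to the arithmetic parameters) of those $(\t C, \h C)$ such that for every $\mbk$, every $\v F'$ with $F_{\mbk,s} \subseteq F'_s \subseteq F_{\mbk,s} \cup (X_s \cap (\h C_s)^{-1}(\mbk(s)))$, and every $n$, if $\Psi_{\mbk}^{\v F'}(n) \downarrow = 1$ then $\t C(n) = j_{\mbk}$. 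The case assumption gives $(C, (C_0,\ldots,C_{r-1})) \in Q$. Using the $\Gamma$-hyperimmunity of $C$ for an arithmetic $\Gamma$ coding the parameters, I argue that a suitable enlargement of $Q$ has full projection on $3^\omega$: for each $\t C$ I pad $\h C$ by making each color class of $\h C_s$ on $X_s$ so sparse that the admissible extension families become trivially handled, with hyperimmunity of $C$ ruling out that the padding inadvertently produces $\Psi$-outputs outside $\t C^{-1}(j_{\mbk})$.

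I then apply Lemma \ref{limitlem1} relativized to the parameters to obtain $(\t C^0, \h C^0), (\t C^1, \h C^1) \in Q$ with $\t C^0, \t C^1$ almost disjoint and $\h C^0_s, \h C^1_s$ not almost disjoint for each $s$. For each $s$ pick $k_s \in 2$ with $(\h C^0_s)^{-1}(k_s) \cap (\h C^1_s)^{-1}(k_s)$ infinite, set $\mbk^{*} = (k_0,\ldots,k_{r-1})$, and define $X'_s = X_s \cap (\h C^0_s)^{-1}(k_s) \cap (\h C^1_s)^{-1}(k_s)$, which is infinite. The new condition $(\{\v F_{\mbk}\}, \v X')$ extends the original, and any $\v G$ in its $\mbk^{*}$-candidate set has every finite initial segment admissible in both $\h C^0$ and $\h C^1$, so $Q$-membership yields $\Psi_{\mbk^{*}}^{\v G} \subseteq (\t C^0)^{-1}(j_{\mbk^{*}}) \cap (\t C^1)^{-1}(j_{\mbk^{*}})$, which is finite by almost disjointness; hence $\mcal R_{\mbk^{*}}$ is forced.

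The main technical obstacle is the full-projection hypothesis of Lemma \ref{limitlem1}. The naive projection of $Q$ is only $\Sigma_2^0$ and contains $C$ but need not cover $3^\omega$; this is exactly where the hypothesis that $C$ is hyperimmune relative to every arithmetic degree is consumed. I expect to handle it either by the padding enlargement sketched above, or by replacing Lemma \ref{limitlem1} with a variant whose hypothesis only asks that the projection contain a sufficiently generic element with respect to the arithmetic parameters. A secondary check, that $X'_s$ remains infinite, is immediate from the non-almost-disjointness clause together with the arithmetic definability of $\h C^0 \oplus \h C^1$ and hyperimmunity of $C$ against the arithmetic array it induces.
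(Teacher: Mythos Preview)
Your overall architecture is right, and your application of Lemma~\ref{limitlem1} in the second case is essentially the paper's Case~2. The gap is precisely where you flag it: full projection. Your padding idea cannot work as stated, because no choice of $\hat C$ can suppress the constraints coming from the \emph{base} tuples $\vec F_{\mbk}$ themselves --- the empty extension is always admissible, so if $\Psi_{\mbk}^{\vec F_{\mbk}}(m)\downarrow=1$ for some $m$ with $\tilde C(m)\ne j_{\mbk}$, then $(\tilde C,\hat C)\notin Q$ regardless of $\hat C$. Hence your $Q$ genuinely need not project onto $3^\omega$, and the variant-lemma escape route is too vague to rescue this.

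The missing idea is to parametrize the class by a cutoff $n$ and only demand $\Psi_{\mbk}^{\vec F'}\cap(n,\infty)\subseteq \tilde C^{-1}(j_{\mbk})$. Call the resulting projection $Q_n\subseteq 3^\omega$; these are $\Pi_1^{0,\vec X}$ and increasing in $n$. The correct dichotomy is whether $Q_n=3^\omega$ for some $n$. If yes, the full-projection hypothesis of Lemma~\ref{limitlem1} is met for free and your Case~2 argument goes through verbatim. If no, then for every $n$ there is (by compactness, $\vec X$-uniformly) a partition $(E_{n,0},E_{n,1},E_{n,2})$ of some interval $(n,l_n)$ witnessing $Q_n\ne 3^\omega$; this gives an $\vec X$-computable array against which the hyperimmunity of $C$ fires, producing an $n^*$ where $C$ agrees with the partition on $(n^*,l_{n^*})$. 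Plugging in the actual $(C_0,\dots,C_{r-1})$ then yields exactly your ``diagonalizable'' situation for some $\mbk^*$. In other words, your first case is not a primary branch of the dichotomy but rather the \emph{conclusion} of the $Q_n\ne 3^\omega$ branch, and hyperimmunity is what gets you there.
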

\begin{proof}
For every $n$, consider the following set $Q_n$ of $\msf{RT}_3^1$ instance $\t{C}$
encoded by some $ (\msf{RT}_2^1)^r$ instance via the condition 
and the given Turing functionals. 
That is, for some $ (\msf{RT}_2^1)^r$ instance
$(\h{C}_0,\cdots,\h{C}_{r-1})$, $(\h{C}_0,\cdots,\h{C}_{r-1})$ together with
$(\{\v{F}_{\mbk}\}_{\mbk\in 2^r}, \v{X})$
forces $\Psi_{\mbk}^{(G_0,\cdots,G_{r-1})}\cap (n,\infty)
\subseteq \h C^{-1}(j_{\mbk})$ for all $\mbk\in 2^r$.
More specifically,  $\t C\in Q_n$ iff:
\begin{align}\nonumber
&\text{There exists a } (\msf{RT}_2^1)^r\text{ instance }(\h{C}_0,\cdots,\h{C}_{r-1})
\text{ such that}\\ \nonumber
&\ \ \text{ for every }\mbk\in 2^r,
\text{ every finite } (\h{F}_0,\cdots,\h{F}_{r-1})\in (\v{F}_{\mbk},\v{X})
\\ \nonumber
&\ \ \text{ with }(\h{F}_0\setminus F_{\mbk,0},\cdots,\h{F}_{r-1}\setminus F_{\mbk,r-1})
\text{ in color }\mbk\text{ of }(\h{C}_0,\cdots,\h{C}_{r-1}),
\\ \nonumber
& \ \ \text{ we have: }\Psi^{(\h{F}_0,\cdots,\h{F}_{r-1})}_{\mbk}\cap(n,\infty) 
\subseteq\t C^{-1}(j_{\mbk}).
\end{align}

\textbf{Case 1}.
For every $n$, $Q_n\ne 3^\omega$.

In this case, since the sequence $Q_n$ does not contain
sufficiently many $3$-colorings, by hyperimmune of $C$,
we show that $C$ will be violated by some finite extension 
of the condition.

By compactness argument and  since $Q_n$ is a $\Pi_1^{0,\v{X}}$ class uniformly in $n$
there exists an $\v{X}$-computable
 increasing array of disjoint triple of finite
 sets $\{(E_{n,0},E_{n,1},E_{n,2})\}_{n\in\omega}$, where
 each $(E_{n,0},E_{n,1},E_{n,2})$ is a partition of
$(n,l_n)\cap\omega$ for some $l_n>n$
such that:

\begin{align}\label{limiteq0}
 &\text{For every  }
 (\msf{RT}_2^1)^r
 \text{ instance }(\h{C}_0,\cdots,\h{C}_{r-1}),
\\ \nonumber
&\ \ \text{ there exists
a }\mbk\in 2^r,
\text{ a  finite  }(\h{F}_0,\cdots,\h{F}_{r-1})\in (\v{F}_{\mbk},\v{X})
\\ \nonumber
&\ \ \text{ with }
(\h{F}_0\setminus F_{\mbk,0},\cdots,
\h{F}_{r-1}\setminus F_{\mbk,r-1})
\text{ in color }\mbk \text{ of }(\h{C}_0,\cdots,\h{C}_{r-1})
\\ \nonumber
&\ \ \text{ such that }\Psi^{(\h{F}_0,\cdots,\h{F}_{r-1})}\cap (n,l_n) 
\nsubseteq E_{n,j_{\mbk}}.
\end{align}
Since $C$ is hyperimmune relative to $\v{X}$,
there exists an $n^*$ such that
$C^{-1}(j)\cap (n,l_n)=E_{n^*,j}$ for all $j\in 3$.
Taking $(\h{C}_0,\cdots,\h{C}_{r-1})$ in (\ref{limiteq0}) to be the given
$(C_0,\cdots,C_{r-1})$,
we have:
\begin{align}\nonumber
&\text{ There exists
a $\mbk^*\in 2^r$
and a  finite  $(\h{F}_0,\cdots,\h{F}_{r-1})\in (\v{F}_{\mbk^*},\v{X})$
}
\\ \nonumber
&\ \ \text{ with  $(\h{F}_0\setminus F_{\mbk^*,0},
\cdots,\h{F}_{r-1}\setminus F_{\mbk^*,r-1})$ in
color $\mbk^*$ of $(C_0,\cdots,C_{r-1})$}
\\ \nonumber
&\ \ \text{
such that $\Psi^{(\h{F}_0,\cdots,\h{F}_{r-1})}\cap
(n,l_n) \nsubseteq C^{-1}(j_{\mbk^*})$.
}
\end{align}
Thus let
$(\{\v{F}^*_{\mbk}\}_{\mbk\in 2^r},\v{X})$ be such a condition
that $\v{F}^*_{\mbk}=\v{F}_{\mbk}$ if $\mbk\ne \mbk^*$
and
$\v{F}^*_{\mbk^*}=(\h{F}_{0},\cdots,\h{F}_{r-1})$, then we are done.

\ \\

\textbf{Case 2}.
For some $n$, $Q_n=3^\omega$.

Let $Q = \{
(\t{C},\h{C}): \t{C}\in 3^\omega,
\h{C}\in (2^\omega)^r\text{ witnesses }
\t{C}\in Q_n
\}$.
Clearly $Q$ satisfies the hypothesis of
Lemma \ref{limitlem1}.
Apply (the relativized) Lemma \ref{limitlem1} to get
 $(\t{C}^0,\h{C}^0),(\t{C}^1,\h{C}^1)\in Q$
 and a $\mbk^*\in 2^r$ such that
 \begin{itemize}
 \item $(\t{C}^0,\h{C}^0)\oplus (\t{C}^1,\h{C}^1)\leq_T
 (\v{X})'$;
 \item  $\t{C}^0,\t{C}^1$ are almost disjoint, while
 \item 
 $(\h{C}^0_s)^{-1}(\mbk^*(s))\cap
(\h{C}^1_s)^{-1}(\mbk^*(s))\cap X_s =X^*_s$
is infinite for all $s<r$.
\end{itemize}
By definition of $Q$, let $\v{X}^*=(X^*_0,\cdots,
X^*_{r-1})$,
for every $(G_0,\cdots,G_{r-1})
\in (\v{F}_{\mbk^*},\v{X}^*)$, we have:
$\Psi_{\mbk^*}^{(G_0,\cdots,G_{r-1})}\cap (n,\infty)\subseteq
(\t{C}^0)^{-1}(j_{\mbk^*})\cap
 (\t{C}^1)^{-1}(j_{\mbk^*})$ is finite.
 Thus the condition
 $(\{\v{F}_{\mbk}\}_{\mbk\in 2^r},
 \v{X}^*)$ extends
 $(\{\v{F}_{\mbk}\}_{\mbk\in 2^r},
 \v{X})$ and forces
$\bigvee_{\mbk\in 2^r}\mcal{R}_{\Psi_{\mbk}}^{j_{\mbk}}$.
Thus we are done.
\end{proof}

\end{proof}

\section{Complexity of the cross constraint}
\label{limitsec1}

\def\force{\vdash}
\def\nforce{\nvdash}

Let $Q\subseteq 3^\omega\times (2^\omega)^r$
be   such that
$Q$ has full projection on $3^\omega$.
We wonder how complex does $Q$ has to be in order to
satisfy the following cross constraint:
\begin{align}\label{limiteq3}
&\text{for every }(X^0,Y^0),(X^1,Y^1)\in Q,
\text{ if }X^0,X^1\text{ are almost disjoint}, \\ \nonumber
&\text{ then }
Y^0_s,Y^1_s\text{ are almost disjoint for some }s<r.
\end{align}

It turns out that $Q$ can not be analytic.

\begin{proposition}
If $Q$ is $\Sigma_1^1$, then $Q$ does not satisfy
$($\ref{limiteq3}$)$.
\end{proposition}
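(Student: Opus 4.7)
The plan is to lift the forcing construction of Lemma \ref{limitlem1} from the $\Pi_1^0$ to the $\Sigma_1^1$ setting, replacing $\Pi_1^0$-compactness with Baire category applied to a measurable selector. I would first apply the Jankov--von Neumann uniformization theorem to the $\Sigma_1^1$ set $Q$ (which has full projection on $3^\omega$) to obtain a $\sigma(\Sigma_1^1)$-measurable selector $f:3^\omega\to (2^\omega)^r$ with $(X,f(X))\in Q$ for every $X$. Such $f$ has the Baire property, hence is continuous on a comeager $G_\delta$ set $G\subseteq 3^\omega$, which is in particular dense in $3^\omega$.

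Next I would redefine goodness so as to accommodate $Q$ being merely $\Sigma_1^1$: call $(\rho,\sigma)\in 3^{<\omega}\times (2^r)^{<\omega}$ \emph{good} if for every $\hat\rho\succeq\rho$ there exists $X\in G\cap [\hat\rho]$ with $f(X)\succeq\sigma$. The empty pair is good, and the key extension property (the analogue of (\ref{limiteq1})) would be verified as follows: given good $(\rho,\sigma)$ and $\hat\rho\succeq\rho$, pick $X\in G\cap [\hat\rho]$ with $f(X)\succeq\sigma$, set $\tilde\sigma=f(X)\upharpoonright(|\sigma|+1)\succ\sigma$, and use continuity of $f\upharpoonright G$ at $X$ to find $m^{\ast}$ such that $f(X')\succeq\tilde\sigma$ for every $X'\in G\cap [X\upharpoonright m^{\ast}]$; then $\tilde\rho=X\upharpoonright\max(m^{\ast},|\hat\rho|+1)$ strictly extends $\hat\rho$, and $(\tilde\rho,\tilde\sigma)$ is good by density of $G$.

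With this revised notion of good, the argument of Lemma \ref{limitlem1}---in particular the condition construction and Claim \ref{limitclaim0}---should transfer essentially verbatim, being a purely combinatorial argument using only the extension property and the three-color flexibility. Interleaving countably many density requirements to force the limits $X^0,X^1$ to land in $G$, continuity of $f\upharpoonright G$ together with goodness forces $f(X^i)\succeq\sigma^i_t$ for every $t$ (since $\{X\in G\cap[\rho^i_t]:f(X)\succeq\sigma^i_t\}$ is clopen-in-$G$ and dense in $G\cap[\rho^i_t]$, hence equals all of it), whence $f(X^i)=\bigcup_t\sigma^i_t=:Y^i$ and $(X^i,Y^i)\in Q$. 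These pairs are almost disjoint in the first coordinate but not componentwise almost disjoint in the second, contradicting (\ref{limiteq3}). The main obstacle is substituting Baire category for $\Pi_1^0$-compactness in the extension step; once that is in place, the combinatorial locking argument of Claim \ref{limitclaim0} is insensitive to the definability class of $Q$.
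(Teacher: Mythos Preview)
Your proposal is correct and takes a genuinely different route from the paper's own proof.

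The paper does \emph{not} pass through a measurable uniformization. Instead it writes the $\Sigma^1_1$ set $Q$ as the projection of a closed set in $3^\omega\times(2^\omega)^r\times\omega^\omega$, introduces an ordinal-ranked forcing relation $\rho\vdash_\gamma \hat Q\cap[(\rho,\sigma)]=\emptyset$, and defines a triple $(\rho,\sigma,\tau)$ to be \emph{good} when no extension of $\rho$ ever forces the corresponding cylinder empty. A short density claim (the analogue of your continuity-at-$X$ step) then yields the same extension property as in (\ref{limiteq1}), after which the combinatorics of Claim \ref{limitclaim0} are reused verbatim. So the paper keeps the whole argument internal to a Cohen-style forcing over the tree representation of $Q$.

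Your approach replaces this bespoke forcing with an off-the-shelf descriptive-set-theoretic package: Jankov--von Neumann gives a Baire-measurable selector $f$, and continuity of $f$ on a comeager $G_\delta$ set $G$ supplies exactly the extension property you need. The step where you argue that $\{X\in G\cap[\rho^i_t]:f(X)\succeq\sigma^i_t\}$, being closed in $G\cap[\rho^i_t]$ and dense there by goodness, must equal all of $G\cap[\rho^i_t]$ is the crux, and it is sound. Interleaving the dense-open requirements to land $X^0,X^1$ in $G$ is compatible with the non-progress constraint on the $3$-coloring side (goodness is upward closed in $\rho$, and with three colors one can always pre-extend $\rho^i$ into any dense open set while staying disjoint from $\rho^{1-i}$ before invoking the extension property).

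What each approach buys: the paper's argument is self-contained and transparently effective in the sense that it never leaves the tree combinatorics; your argument is shorter and conceptually cleaner, trading the ordinal induction for a classical black box, and it makes plain that the only obstruction to lifting Lemma \ref{limitlem1} is the loss of compactness, which Baire category fully compensates for at the $\Sigma^1_1$ level.
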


\begin{proof}

The proof is by constructing
a sequence of \good\ pairs (as a sequence of conditions)  as in
Lemma \ref{limitlem1},
 except
that we redefine ``\good" to adapt to $\Sigma_1^1$ set.

We inductively define a forcing notion
in spirit of Cohen forcing.
For a closed set $\h Q\subseteq \omega^\omega\times \omega^\omega$,
let $\h T$ be the tree defining $\h Q$, i.e., $[\h T]=\h Q$;
for a $\rho,\sigma\in \omega^{<\omega}$,
\begin{itemize}
\item we write
$\rho\force_0 \h Q\cap [(\rho,\sigma)]=\emptyset$
if $(\rho,\sigma)\notin \h T$;

\item for an ordinal $\gamma$, we write
$\rho\force_\gamma \h Q\cap [(\rho,\sigma)]=\emptyset$
if the set
$$\big\{
\h\rho\succeq\rho: \text{ for some }
\h \gamma<\gamma,
\h \rho \force_{\h \gamma}\h Q\cap [(\h\rho,\sigma^\smallfrown n)]=\emptyset
\big\}$$ is dense over $\rho$
for all $n\in\omega$.

\item we write
$\rho\force \h Q\cap [(\rho,\sigma)]=\emptyset$
if $\rho\force_\gamma \h Q\cap [(\rho,\sigma)]=\emptyset$
for some ordinal $\gamma$.
 \end{itemize}

\begin{claim}\label{limitclaim1}
If $\rho\nforce\ \h Q\cap [(\rho,\sigma)]=\emptyset$,
then for some $n\in\omega$, some $\t\rho\succeq\rho$,
we have:
$\h\rho\nforce\ \h Q\cap [(\h\rho,\sigma^\smallfrown n)]=\emptyset$
for all $\h \rho\succeq\t\rho$.
\end{claim}
\begin{proof}
For each $n\in\omega$,
consider the set
$$
A_n=\{
\h\rho\succeq\rho: \h\rho\force \h Q\cap [(\h\rho,\sigma^\smallfrown n)]=\emptyset
\}.
$$
Since $A_n$
is countable, there exists an ordinal $\gamma$ such that
$$A_n= \{
\h\rho\succeq\rho: \h\rho\force_\gamma \h Q\cap [(\h\rho,\sigma^\smallfrown n)]=\emptyset
\}\text{ for all }n.$$

If $A_n$ is dense over $\rho$ for all $n$,
then we have $\rho\force\ \h Q\cap [(\rho,\sigma)]=\emptyset$,
a contradiction.
Therefore, suppose $A_n$ is not dense over $\rho$.
Then there exists $\t\rho\succeq\rho$
such that $A_n\cap [\t\rho]^\preceq=\emptyset$.
Thus $\t\rho,\sigma^\smallfrown n$
is the desired object.

\end{proof}

We think of $Q$ as a subset set of the product space of
$3^\omega$ and $(2^\omega)^r\times \omega^\omega$.
A \emph{\good} triple is a triple
$(\rho,\sigma,\tau)\in 3^{<\omega}\times 2^{<\omega}\times \omega^{<\omega}$
such that
for every $\h \rho\succeq \rho$,
$\h \rho\nforce\ Q\cap [(\h\rho,\sigma,\tau)]=\emptyset$.

Similarly as (\ref{limiteq1}), we have the following:
 \begin{align}\nonumber
 &\text{ for every good triple }
 (\rho,\sigma,\tau),\text{ every }\h\rho\succeq \rho,
 \\ \nonumber
 &\text{ there exists a good triple }
 (\t\rho,\t\sigma,\t\tau)\succ(\h\rho,\sigma,\tau).
 \end{align}

The rest of the proof proceeds exactly as
Lemma \ref{limitlem1}.

\end{proof}

On the other hand,
Johannes Schürz point out that a non principal
ultrafilter on $\omega$ give rise to such a set $Q$.
Recall that a non principal ultrafilter $\mcal{U}$ on $\omega$
is a collection of sets of integers such that:
\begin{itemize}
\item
$\mcal{U}$ is closed under finite intersection (filter),
\item $\mcal{U}$ is upward closed (filter);
\item for every $Z\subseteq \omega$, either $Z\in \mcal{U}$
or $\omega\setminus Z\in \mcal{U}$ (ultrafilter);

\item it is not of form $\{Z\subseteq \omega: n\in Z\}$ for some $n\in\omega$
(non principal).
\end{itemize}
\begin{proposition}[Johannes Schürz]\label{limitprop1}
If there exists a non principal ultrafilter on $\omega$,
then there exists a function $f:3^\omega\rightarrow (2^\omega)^2$
such that for every two almost disjoint\ $X^0,X^1\in 3^\omega$,
$f(X^0)_s,f(X^1)_s$ are almost disjoint\ for some $s<2$.
\end{proposition}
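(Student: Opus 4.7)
The plan is to use the ultrafilter $\mcal{U}$ to extract a single ``dominant color'' invariant from each $3$-coloring and to define $f$ as a function of this invariant alone. For each $X\in 3^\omega$, the three sets $X^{-1}(0),X^{-1}(1),X^{-1}(2)$ partition $\omega$, so exactly one of them lies in $\mcal{U}$; denote its index by $j^*(X)\in\{0,1,2\}$. The key observation I would verify first is that if $X^0,X^1$ are almost disjoint, then $j^*(X^0)\ne j^*(X^1)$: otherwise both $(X^0)^{-1}(j)$ and $(X^1)^{-1}(j)$ would lie in $\mcal{U}$ for the common value $j=j^*(X^0)=j^*(X^1)$, so their intersection would also be in $\mcal{U}$ by closure under finite intersection, and hence be infinite since $\mcal{U}$ is non principal and thus contains no finite set, contradicting the almost-disjointness hypothesis.

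Having reduced the problem to distinguishing elements of $\{0,1,2\}$, I would then exhibit three pairs of constant $2$-colorings witnessing the desired cross property. Let $\bar 0,\bar 1\in 2^\omega$ denote the two constant $2$-colorings and assign
\[
(c_{0,0},c_{0,1})=(\bar 0,\bar 0),\quad
(c_{1,0},c_{1,1})=(\bar 0,\bar 1),\quad
(c_{2,0},c_{2,1})=(\bar 1,\bar 0),
\]
then define $f(X)=(c_{j^*(X),0},c_{j^*(X),1})$. For any two distinct indices $j_0\ne j_1$ in $\{0,1,2\}$, a case check on the three assignments above shows that at least one coordinate $s<2$ satisfies $c_{j_0,s}\ne c_{j_1,s}$, in which case the pair $(c_{j_0,s},c_{j_1,s})$ consists of $\bar 0$ and $\bar 1$, which are trivially almost disjoint (each color-preimage of one coloring is empty on the other). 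Combined with the previous paragraph, this gives almost-disjointness of $f(X^0)_s$ and $f(X^1)_s$ on that coordinate whenever $X^0,X^1$ are almost disjoint.

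There is no substantive obstacle here; the argument rests entirely on the two relevant axioms of $\mcal{U}$ (closure under finite intersection, non-principality) together with the elementary combinatorial fact that three elements can be separated by two binary coordinates. Conceptually, the ultrafilter is doing all the work by converting the ``global'' almost-disjointness constraint on $3$-colorings into the ``local'' constraint of choosing distinct labels in $\{0,1,2\}$, after which a fixed discrete encoding into $\{0,1\}^2$ suffices.
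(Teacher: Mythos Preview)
Your proof is correct and is essentially identical to the paper's: the paper defines $h(X)=j$ iff $X^{-1}(j)\in\mcal{U}$ (your $j^*$) and sets $f(X)$ to one of $(\emptyset,\emptyset),(\emptyset,\omega),(\omega,\emptyset)$ according to $h(X)$, which is precisely your assignment of constant pairs $(\bar 0,\bar 0),(\bar 0,\bar 1),(\bar 1,\bar 0)$. You even give slightly more detail than the paper in justifying $j^*(X^0)\ne j^*(X^1)$ via closure under intersection.
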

\begin{proof}
Let $\mcal{U}$ be a non principal ultrafilter on $\omega$.
For $X\in 3^\omega$,
let $h(X)= j$ iff $X^{-1}(j)\in \mcal{U}$.
Let $f(X) = (\emptyset,\emptyset),(\emptyset,\omega), (\omega,\emptyset)$
respectively
iff $h(X) = 0,1,2$ respectively
(where $(\emptyset,\emptyset)$ represents
such a $(Y_0,Y_1)\in (2^\omega)^2$ that $Y_0^{-1}(1)=Y_1^{-1}(1)=\emptyset$,
and similarly for $(\omega,\emptyset), (\emptyset,\omega)$).
Note that if $X^0,X^1\in 3^\omega$ are almost disjoint,
then $h(X^0)\ne h(X^1)$, since $\mcal{U}$ is non principal
(which means no finite set is in $\mcal{U}$).
But clearly, if $h(X^0)\ne h(X^1)$,
then $f(X^0)_s,f(X^1)_s$ are almost disjoint for some $s<2$.

\end{proof}

The following result shows that it is consistent with
$\msf{ZFC}$ that ``there exists a $\Pi_1^1$
set $Q\subseteq 3^\omega\times (2^\omega)^3$
with full projection on $3^\omega$
satisfying (\ref{limiteq3})".

\begin{proposition}[Jonathan]\label{limitprop0}
If $\mathbf{V}=\mathbf{L}$, then
there exists a $\Pi_1^1$ set $Q\subseteq 3^\omega\times (2^\omega)^3$
$($a $\Sigma_2^1$ set $Q\subseteq 3^\omega\times (2^\omega)^2$ respectively$)$
with full projection on $3^\omega$ satisfying $($\ref{limiteq3}$)$.
\end{proposition}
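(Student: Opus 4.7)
The plan is to handle the two assertions separately, using the $\Sigma^1_2$ well-ordering $<_L$ of the reals provided by $\mathbf{V}=\mathbf{L}$.

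For the $\Sigma^1_2$ statement with $r=2$, I plan to reduce to Proposition~\ref{limitprop1} by first constructing a non-principal ultrafilter $\mathcal{U}$ on $\omega$ whose membership relation is $\Sigma^1_2$. This is the standard $\mathbf{L}$-construction: enumerate the subsets of $\omega$ along $<_L$ as $\{A_\alpha : \alpha < \omega_1\}$, and at stage $\alpha$ insert whichever of $A_\alpha$ or $\omega \setminus A_\alpha$ is compatible with the finite-intersection property accumulated so far, breaking ties by $<_L$. The predicate ``$A \in \mathcal{U}$'' then reads ``some countable initial segment of the $\mathbf{L}$-hierarchy witnesses that $A$ was inserted,'' which is $\Sigma^1_2$. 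Feeding $\mathcal{U}$ into Proposition~\ref{limitprop1} produces a function $f : 3^\omega \to (2^\omega)^2$; taking $Q$ to be its graph yields a $\Sigma^1_2$ set with full projection on $3^\omega$ that satisfies (\ref{limiteq3}).

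For the $\Pi^1_1$ statement with $r=3$, the extra coordinate is essential since no non-principal ultrafilter can be even analytic. I plan to construct, under $\mathbf{V}=\mathbf{L}$, a proper $3$-coloring $\chi : 3^\omega \to \{0, 1, 2\}$ of the almost-disjoint graph on $3^\omega$ (i.e.\ $\chi(X^0) \ne \chi(X^1)$ whenever $X^0, X^1$ are almost disjoint) whose graph is $\Pi^1_1$, and then set
\[
Q = \bigl\{(X, (Y_0, Y_1, Y_2)) : Y_{\chi(X)} \equiv 1 \text{ and } Y_j \equiv 0 \text{ for every } j \ne \chi(X)\bigr\}.
\]
Full projection is immediate; for almost-disjoint $X^0, X^1$ with $i = \chi(X^0) \ne \chi(X^1)$, the $i$-th components $Y^0_i \equiv 1$ and $Y^1_i \equiv 0$ are disjoint, hence almost disjoint, which yields (\ref{limiteq3}). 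To produce $\chi$, I plan to enumerate $3^\omega$ along $<_L$ as $\{X_\alpha : \alpha < \omega_1\}$ and recursively assign to $X_\alpha$ the $<_L$-least color not yet used on any $<_L$-earlier almost-disjoint partner of $X_\alpha$; a color is always available because the $\Sigma^1_2$ ultrafilter from the first part already witnesses that $3$ bounds the chromatic number of the almost-disjoint graph.

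A naive reading of this recursion gives $\chi$ only as $\Sigma^1_2$; to refine it to $\Pi^1_1$ I plan to re-express ``$\chi(X) = j$'' as ``for every countable $X$-admissible ordinal $\alpha$ large enough to contain $X$'s $<_L$-stage, the $\mathbf{L}_\alpha$-internal recursion assigns color $j$ to $X$,'' which is a $\Pi^1_1$ form in $X$ by Kleene's normal form. Under $\mathbf{V}=\mathbf{L}$ we have $\omega_1 = \omega_1^\mathbf{L}$, so every $X$ has a countable $<_L$-stage, and the internal computations in cofinally many $X$-admissibles stabilize to the true value. The main obstacle, as usual for such $\mathbf{L}$-constructions, is the complexity bookkeeping in this final step --- confirming that the universal quantifier over countable admissibles really compresses the $<_L$-recursion into a $\Pi^1_1$ predicate rather than a $\Pi^1_2$ one. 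This amounts to verifying (a) that the ``$\mathbf{L}_\alpha$-internal $\chi$'' is uniformly $\Delta_1$ in $\alpha$ over $X$, and (b) that stabilization of the $\mathbf{L}_\alpha$-internal values is captured by well-foundedness of an $X$-recursive tree --- both standard but delicate facts about constructibility that need to be checked carefully.
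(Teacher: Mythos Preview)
Your $\Sigma^1_2$ argument for $r=2$ is fine and matches the paper's: build a $\Sigma^1_2$ non-principal ultrafilter $\mathcal{U}$ along $<_L$ and take $Q$ to be the graph of the function $f$ from Proposition~\ref{limitprop1}.

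Your $\Pi^1_1$ argument for $r=3$, however, cannot work as written, and the paper proceeds quite differently. There are two problems. First, the greedy step is unjustified: the fact that the almost-disjoint graph on $3^\omega$ has chromatic number $3$ does \emph{not} imply that greedy coloring along an arbitrary well-ordering uses only three colors. Indeed one can write down seven explicit elements of $3^\omega$ and an ordering of them on which greedy needs a fourth color; nothing you have said rules this out for $<_L$. Second, and more fundamentally, your $Q$ is the graph of a \emph{function} $3^\omega\to(2^\omega)^3$. If this $Q$ were $\Pi^1_1$, then each fiber $\chi^{-1}(j)$ would be a $\Pi^1_1$ section of $Q$, and since the three fibers partition $3^\omega$ each would be $\Delta^1_1$, i.e.\ Borel. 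But there is \emph{no} Borel $3$-coloring of the almost-disjoint graph: any proper $3$-coloring is forced to be constant on $E_0$-classes (because each $\{X,\sigma X,\sigma^2 X\}$-triple together with its $E_0$-saturations forms a complete tripartite graph, whose only $3$-colorings are constant on parts), hence by generic ergodicity of $E_0$ it is generically constant, contradicting $\chi(X)\ne\chi(\sigma X)$. So no amount of complexity bookkeeping will rescue this plan.

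The paper's idea is to make $Q$ \emph{multi-valued} so as to avoid exactly this obstruction. Since $\mathcal{U}$ is $\Sigma^1_2$, write it as the projection of a $\Pi^1_1$ set $\mathcal{V}\subseteq 2^\omega\times 2^\omega$. With $Y^0,Y^1,Y^2\in(2^\omega)^2$ the three fixed pairs from Proposition~\ref{limitprop1}, put
\[
Q=\bigl\{(X,(Y^j,\hat Y))\in 3^\omega\times(2^\omega)^3:\ (X^{-1}(j),\hat Y^{-1}(1))\in\mathcal{V}\text{ for some }j<3\bigr\}.
\]
The third coordinate $\hat Y$ carries a $\Pi^1_1$-witness for $X^{-1}(j)\in\mathcal{U}$, so $Q$ is $\Pi^1_1$ outright; full projection holds because some $X^{-1}(j)$ lies in $\mathcal{U}$; and the cross constraint holds exactly as in Proposition~\ref{limitprop1} via the first two coordinates. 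The extra coordinate is used not to encode a third color but to absorb the existential quantifier in $\Sigma^1_2$.
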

\begin{proof}
In $\mathbf{L}$, we
can construct a $\Sigma_2^1$ non principal ultrafilter
$\mcal{U}$
on $\omega$.
Suppose $\mcal{U}$ is the projection of a $\Pi_1^1$ set
$\mcal{V}\subseteq 2^\omega\times 2^\omega$ on the first component.
Let $Y^0,Y^1,Y^2$ denote $(\emptyset,\emptyset), (\emptyset,\omega),(\omega,\emptyset)
\in (2^\omega)^2$
respectively.
Let $Q$ consists of such $(X,Y)\in 3^\omega\times (2^\omega)^3$ that:
\begin{align}\nonumber
 \text{ for some }j<3, Y= (Y^j,\h Y)\text{ and }
(X^{-1}(j), \h Y^{-1}(1))\in \mcal{ V}.
\end{align}

Clearly $Q$ is $\Pi_1^1$ and has full projection on
$3^\omega$.
It's also clear that  $Q$ satisfies (\ref{limiteq3}).

\end{proof}

As demonstrated in Proposition \ref{limitprop0},
a set $Q$ in $3^\omega\times (2^\omega)^2$
is more complex than one in $3^\omega\times (2^\omega)^3$.
We wonder if it is necessarily more complex.
i.e., let $\msf{ECC(r)}$ denote the assertion
``
 there is  a set $Q\subseteq 3^\omega\times (2^\omega)^r$
 satisfying (\ref{limiteq3})"; let
 $\msf{EU(\omega)}$ denote the assertion ``there exists a non principal ultrafilter on $\omega$".
 By Proposition \ref{limitprop1},
 over $\msf{ZF}$, $\msf{EU}(\omega)\rightarrow \msf{ECC}(r)\rightarrow \msf{ECC}(r-1)$
 for all $r>3$.
 \begin{question}
 Are implications (set theoretic) in
 $\msf{EU}(\omega)\rightarrow \msf{ECC}(r)\rightarrow \msf{ECC}(r-1)$
strict?
 \end{question}

\section{Basis theorem for $\Pi_1^0$ class with cross 
constraint}
\label{limitsec2}

In \cite{cholak2019some}, Cholak, Dzhafarov, Hirschfeldt, Patey asked
whether $\msf{D}^2_3$ is computably reducible to 
$\msf{D}^2_2$.
This is equivalent (modulo the relativization) to ask
whether there is a $\Delta_2^0$
$3$-coloring $C\in 3^\omega$
such that for every two $\Delta_2^0$ $2$-colorings
$C_0,C_1\in 2^\omega$, there is a solution 
$(G_0,G_1)$ to $(C_0,C_1)$ such that $(G_0,G_1)$ 
does not compute a solution of $C$.
The main objective of this section is to prove the following 
improvement of Theorem \ref{limitth4} which address the above question
in affirmative.

\begin{theorem}\label{limitth4}
We have $\msf{RT}_3^1\nleq_{soc}
(\msf{RT}_2^1)^{<\omega}$. Moreover, there exists a $\Delta_2^0$ $\msf{RT}_3^1$ instance $C $
witnessing the conclusion.

\end{theorem}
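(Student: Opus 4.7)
The plan is to adapt the argument of Theorem \ref{limitth0} with $C$ now only $\Delta_2^0$, replacing the hypothesis that $C$ is hyperimmune relative to \emph{every} arithmetic degree by a weaker hyperimmunity that is actively \emph{preserved} throughout the forcing. Fix a $\Delta_2^0$ $\msf{RT}_3^1$ instance $C$ that is hyperimmune relative to $\emptyset$ (such a $C$ exists by a standard construction). Run the same tuple-Mathias forcing with conditions $(\{\v F_{\mbk}\}_{\mbk\in 2^r},\v X)$ as in the proof of Theorem \ref{limitth0}, but strengthen the condition notion by demanding that the reservoir $\v X$ preserves hyperimmunity of $C$, in the sense that $C$ remains hyperimmune relative to $\v X$. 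This is compatible with the initial condition $\v X=(\omega,\ldots,\omega)$, and extending via subsets of previous reservoirs, as we shall do, cannot by itself introduce the preservation constraint; it must be enforced at every extension step.

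With this modified forcing, the key Claim \ref{limitclaim5} is re-proved along the same Case~1/Case~2 dichotomy. Case~1 ($Q_n\ne 3^\omega$) goes through unchanged: the relativized hyperimmunity of $C$ to $\v X$ produces $n^*$ and a finite extension of the condition diagonalizing against $\Psi_{\mbk^*}$, and this finite extension does not change the reservoir, so preservation is trivial. Case~2 ($Q_n=3^\omega$) is where the genuine new input is needed: we require a hyperimmune-preserving strengthening of Lemma \ref{limitlem1}, stating that given a $\Pi_1^{0,\v X}$ class $Q\subseteq 3^\omega\times (2^\omega)^r$ with full projection on $3^\omega$, one can find $(\t C^0,\h C^0),(\t C^1,\h C^1)\in Q$ satisfying the original cross constraint and additionally such that $C$ remains hyperimmune relative to $\v X\oplus(\t C^0,\h C^0)\oplus(\t C^1,\h C^1)$. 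Granted this, the updated reservoir $\v X^*$ built as in Case~2 of Theorem \ref{limitth0} lies in the stronger forcing, and the usual compactness argument yields a solution $(G_0,\ldots,G_{r-1})$ of $(C_0,\ldots,C_{r-1})$ computing no solution of $C$.

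The main obstacle is the hyperimmune-preservation strengthening of Lemma \ref{limitlem1}. The plan is to reuse the two-sequence-of-\good-pairs forcing from the proof of Lemma \ref{limitlem1}, but to thread through it a hyperimmunity-preservation condition on the partial objects $\rho^i_t,\sigma^i_t$. Concretely, the density statement (\ref{limiteq1})—that every \good\ pair admits a \good\ extension—must be upgraded so that, given any $(\v X\oplus \rho^0_t\oplus\sigma^0_t\oplus\rho^1_t\oplus\sigma^1_t)$-computable array of disjoint triples witnessing a potential failure of hyperimmunity, one can find a \good\ extension whose future cannot be forced to match the array at any column. This is a standard hyperimmune-preservation basis argument for $\Pi_1^0$ classes, and it meshes with the lock/positive-progress analysis of Claim \ref{limitclaim0} because that analysis only uses the density of \good\ pairs in a component-wise manner. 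The hardest point is verifying that the two density requirements—making positive progress on each non-excluded component and avoiding each hyperimmune-challenge array—can be satisfied simultaneously; this reduces to showing that the augmented forcing is still sufficiently branching, i.e., that one can always find a \good\ extension of $(\rho^i,\sigma^i)$ whose $\sigma^i_s$ end-extends in at least two ways beyond a given length. Once this refined lemma is proved, the remainder of the proof of Theorem \ref{limitth4} is a line-by-line translation of the proof of Theorem \ref{limitth0} with ``hyperimmune relative to $\v X$'' replacing ``hyperimmune relative to any arithmetic degree.''
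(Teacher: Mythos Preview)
Your high-level plan matches the paper's: strengthen the Mathias conditions of Theorem \ref{limitth0} so that $C$ stays hyperimmune relative to the reservoir $\v X$, and reduce Case~2 to a hyperimmunity-preserving variant of Lemma \ref{limitlem1}. The gap is in the sentence ``This is a standard hyperimmune-preservation basis argument for $\Pi_1^0$ classes.'' It is not, and this is where the real work of Theorem \ref{limitth4} lies.

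The relevant notion is $3$-coloring hyperimmunity: for every computable array of finite partial $3$-colorings $\{c_n\}_n$ with $\dom(c_n)>n$, some $n$ has $C\uhr\dom(c_n)=c_n$. Unlike function hyperimmunity, there is no ``max'' that collapses the finitely many values $\Psi^X(n)$ (as $X$ ranges over a $\Pi_1^0$ class) into a single partial coloring to test $C$ against. Inside the good-pair forcing the condition carries a $\Pi_1^{0,D}$ class $P$ (as in Lemma \ref{limitlem2}), and to force ``$C$ diagonalizes $\Psi^{(X^0,Y^0)\oplus(X^1,Y^1)}$'' you face a \emph{tree} of candidate partial $3$-colorings, one per node of $T_P$, with branches that may later die. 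No single computable array of partial colorings captures this, so ordinary hyperimmunity of $C$ gives you nothing to diagonalize against. Your ``sufficiently branching'' remark addresses a different issue and does not touch this obstruction.

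The paper's solution is a new notion, $\Gamma$-hyperimmunity, built on a hierarchy $\Gamma_0,\Gamma_1,\ldots$ where $\Gamma_0$ is the set of finite partial $3$-colorings and $\Gamma_m$ consists of finite tree-computation paths in $\Gamma_{m-1}$ (finite trees that may grow at leaves and be pruned, but never to empty, at internal nodes). Each $\Gamma_m$ is well-founded, so a $\Delta_2^0$ coloring $C$ can be made $\Gamma$-hyperimmune (Lemma \ref{limitlem7}). The payoff is a level shift: diagonalizing a $\Gamma_{m-1}$-valued functional inside a $\Pi_1^0$-class forcing yields a computable $\Gamma_m$-approximation (Lemma \ref{limitlem5}), and the same holds inside the cross-constraint forcing (Lemma \ref{limitlem6}). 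Thus $\Gamma$-hyperimmunity---unlike ordinary hyperimmunity---is preserved through both, and with ``$\Gamma$-hyperimmune'' in place of ``hyperimmune'' your outline becomes exactly the paper's proof.
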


Just like Theorem \ref{limitth0} is reduced to Lemma \ref{limitlem1},
Theorem \ref{limitth4} is reduced to the following
improvement of Lemma \ref{limitlem1} where 
the two members are weakened in certain ways,
i.e., preserving a sort of hyperimmune of the 
$3$-coloring $C$.
Let $Q\subseteq 3^\omega\times (2^\omega)^r$ be
a $\Pi_1^0$ class having full projection on $3^\omega$.
Let $C\in 3^\omega$ be $\Gamma$-hyperimmune (see section \ref{limitsubsec2}
for the definition). 
\begin{restatable}{lemma}{limitlem}
\label{limitlem6}
There exist $(X^0,Y^0),(X^1,Y^1)\in Q$
such that suppose $Y^i=(Y^i_0,\cdots,Y^i_{r-1})$, we have
 $X^0,X^1$ are almost disjoint
and $Y^0_s,Y^1_s$ are not almost disjoint for all $s<r$.
Moreover, $C$ is
$\Gamma$-hyperimmune relative to
$(X^0,Y^0)\oplus (X^1,Y^1) $.
\end{restatable}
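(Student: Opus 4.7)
The plan is to keep the forcing skeleton of Lemma \ref{limitlem1} intact, with the same notion of good pair and the same two-branch condition $d = ((\rho^0,\sigma^0),(\rho^1,\sigma^1))$, and to overlay on it an extra family of requirements that preserve the $\Gamma$-hyperimmunity of $C$ relative to $(X^0,Y^0)\oplus (X^1,Y^1)$. As in Lemma \ref{limitlem1}, admissible extensions of $d$ are required to (i) avoid positive progress on any color $j\in 3$ in the $X$-coordinates and (ii) positively progress on every component $s<r$ in the $Y$-coordinates; the locking claim (Claim \ref{limitclaim0}) then continues to guarantee an initial condition along which positive progress on every remaining component can be indefinitely maintained, so that the combinatorial output of Lemma \ref{limitlem1} is not disturbed.

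For each Turing functional $\Phi$, I add the requirement $\mcal{H}_\Phi$ asserting that $\Phi^{(X^0,Y^0)\oplus(X^1,Y^1)}$ does not witness the failure of $\Gamma$-hyperimmunity of $C$. To force $\mcal{H}_\Phi$ at a given condition $d$, I proceed via a dichotomy modelled on the Case 1 / Case 2 split in the proof of Theorem \ref{limitth0}. Either some admissible extension of $d$ already forces $\Phi$ to be partial or to produce an ill-formed tuple of disjoint finite sets, in which case that extension is our new condition and we are done. Otherwise, every admissible extension of $d$ keeps pushing $\Phi$'s output past any prescribed threshold $n$. By a compactness argument applied to the $\Pi^0_1$ class of admissible extensions of $d$, the family of possible $\Phi$-values at index $n$ yields a $\v{X}$-computable array of $3$-tuples of disjoint finite sets of the shape controlled by $\Gamma$-hyperimmunity. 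Invoking $\Gamma$-hyperimmunity of $C$ produces an index $n^*$ whose tuple is correctly coloured by $C$, and the corresponding witnessing admissible extension of $d$ then refutes $\Phi$ at $n^*$, satisfying $\mcal{H}_\Phi$.

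The main obstacle will be carrying out the second horn of the dichotomy without violating the structural constraints from Lemma \ref{limitlem1}, namely good-pairness of each branch and the no-positive-progress constraint on the $X$-coordinates. The array one hands to $\Gamma$-hyperimmunity must both be $\v{X}$-computable and arise from the restricted sub-$\Pi^0_1$-class of those extensions that simultaneously remain good and color-disjoint on the $X$-side; the non-trivial point is that, under these side-conditions, the projection of possible $\Phi$-outputs is still large enough to support the application of $\Gamma$-hyperimmunity. I expect this is precisely where the exact strength of $\Gamma$ developed in Section \ref{limitsubsec2} must be calibrated: strong enough to defeat arrays drawn from this restricted class, yet weak enough that a $\Delta_2^0$ instance of $\msf{RT}_3^1$ can still be $\Gamma$-hyperimmune, which is ultimately what enables Theorem \ref{limitth4}.
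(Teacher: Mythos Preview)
There is a genuine gap, and it lies in the forcing notion you choose. You propose to reuse the bare conditions $((\rho^0,\sigma^0),(\rho^1,\sigma^1))$ of Lemma~\ref{limitlem1}, but the paper's proof requires the augmented conditions $((\rho^0,\sigma^0),(\rho^1,\sigma^1),P,D)$ of Lemma~\ref{limitlem2}, where $P\subseteq Q$ is a $\Pi_1^{0,D}$ class and $C$ is $\Gamma$-hyperimmune relative to $D$. The reason is that forcing $\mcal H_\Psi$ does not in general produce a finite extension of the pair alone: it produces a nonempty $\Pi^0_1$ class $\mcal T$ of trees $\h T\subseteq T_P$ on which $\Psi(n^*)$ is frozen, and one must then invoke Lemma~\ref{limitlem5} to pick $\h T\in\mcal T$ with $C$ still $\Gamma$-hyperimmune relative to $\h T$, so that $((\h\rho^0,\h\sigma^0),(\h\rho^1,\h\sigma^1),[\h T],\h T)$ is again a condition. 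Your conditions have no slot for this refinement, so the iteration cannot continue. (Relatedly, your references to a ``$\v X$-computable array'' are imported from the Mathias forcing of Theorem~\ref{limitth0}; there is no $\v X$ here.)

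The second gap is in how you intend to invoke $\Gamma$-hyperimmunity. You expect compactness to yield ``an array of $3$-tuples of disjoint finite sets'', but $\Gamma$-hyperimmunity diagonalizes against $\Gamma_m$-approximations, i.e.\ tree-computation paths, not flat arrays. The whole point of the $\Gamma_m$ hierarchy is to absorb backtracking: goodness of $(\rho^i,\sigma^i)$ for $P$ is only $\Pi^0_1$, so a candidate extension may look good at stage $s$ and later be revealed bad, and the approximation must then prune that branch (the ``or case'' of one-step variation) while keeping a nonempty set of siblings. The paper organizes this via the notions of \emph{sufficient} and \emph{sub-sufficient} sets (Definition~\ref{limitdefsuf}) and Claim~\ref{limitclaim3}, which guarantees that after a disjoint-preserving extension and deletion of bad nodes, the surviving set is still sub-sufficient and hence nonempty. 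The resulting computable object is a genuine $\Gamma_m$-approximation $f(n,\cdot)$; diagonalizing $C$ against $f(n^*)$ singles out a leaf whose associated class $\mcal T$ is nonempty, feeding into the step above. Your dichotomy (``either $\Phi$ is partial/ill-formed, or compactness gives an array'') does not produce such an object and, as you yourself anticipate in your last paragraph, the side-conditions of goodness and no-positive-progress cannot be handled by a simple array.
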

The proof will be delayed to section \ref{limitsubsec4}.
Clearly, Lemma \ref{limitlem6} is a cross constraint version 
 basis theorem for $\Pi_1^0$ class since non empty $\Pi_1^0$ class
do admit members preserving the $\Gamma$-hyperimmune.
Of course, Lemma \ref{limitlem1}  can be seen as a cross constraint version
basis theorem for $\Pi_1^0$
class as well  and its normal version is simply that every non empty $\Pi_1^0$
class admit a $\Delta_2^0$ member.

The proof of Lemma \ref{limitlem6} consists of two separate
ideas:
\begin{enumerate}
\item Cross constraint version 
basis theorem for $Q$  (say cone avoidance, see 
Lemma \ref{limitlem2});

\item Defining the appropriate hyperimmune notion, namely $\Gamma$-hyperimmune,
and demonstrate how it is preserved by, say $\msf{WKL}$.

\end{enumerate}

We illustrate the idea  of cross constraint version basis theorem
in section \ref{limitsubsec1} where we prove  the cross constraint version 
 cone avoidance (Lemma \ref{limitlem2}).
In section \ref{limitsubsec2} we define the $\Gamma$-hyperimmune 
and prove in section \ref{limitsubsec3}
Lemma \ref{limitlem5} that  it is preserved by $\msf{WKL}$.
This not only serves as a demonstration of how $\Gamma$-hyperimmune is preserved
but is also needed in Lemma \ref{limitlem6}.
Combining the two ideas, we prove Lemma \ref{limitlem6} in
section \ref{limitsubsec4}.
Before we proceed to Lemma \ref{limitlem2}, we explore some 
combinatorial aspect on $Q$ in section \ref{limitsubsec0}.
We end up this part of the section by proving Theorem \ref{limitth4}
using Lemma \ref{limitlem6}.

\begin{proof}[Proof of Theorem \ref{limitth4}]
Let $C$ be a $\Delta_2^0$ $3$-coloring 
that is $\Gamma$-hyperimmune (which exists 
by Lemma \ref{limitlem7}).
Fix an $r\in\omega$ and $r$ many 
$2$-colorings $C_0,\cdots, C_{r-1}$.

We use similar \emph{condition }
$(\{\v F_{\mbk}\}_{\mbk\in 2^r}, \v X)$
as in Theorem \ref{limitth0} except that 
instead of $\v X$ being arithmetic,
we require $C$ to be $\Gamma$-hyperimmune relative 
to $\v X$.
The definition of extension, forcing
and requirement are identical as Theorem \ref{limitth0}.
It suffices to show how to extend a condition to force a 
requirement.
This part follows exactly as Claim \ref{limitclaim5}
except that:
\begin{enumerate}
\item In case 1, to see $C$ is not uniformly encoded,
we take advantage of $C$ being $\Gamma$-hyperimmune relative 
to $\v X$ (instead of hyperimmune relative to $\v X$);

\item In case 2, instead of selecting 
$(\t C^0,\h C^0), (\t C^1, \h C^1)$ so that 
$(\t C^0,\h C^0)\oplus (\t C^1, \h C^1)\leq_T (\v X)'$,
we require that $C$ is $\Gamma$-hyperimmune relative 
to $(\t C^0,\h C^0)\oplus (\t C^1, \h C^1)$,
promised by Lemma \ref{limitlem6}.

\end{enumerate}

The rest of the proof are exactly the same.

\end{proof}

\subsection{Combinatorial lemmas}
\label{limitsubsec0}

For   $\rho_0,\rho_1\in k^{<\omega}$, we say
$\rho_0,\rho_1$ are \emph{disjoint} iff 
$\rho_0^{-1}(j)\cap \rho_1^{-1}(j)=\emptyset$
for all $j\in k$.

For $n,\h n\in \omega$, a set $A\subseteq k^n$,
 an injective function $ext: A\rightarrow k^{\h n}$, we say
 $ext$ is a \emph{disjoint preserving extension over $(\rho^0,\rho^1)$}
 iff
\begin{itemize}
\item  $ext(\rho)\succeq\rho$ for all $\rho\in A$; and
\item for every $\t\rho^0,\t\rho^1\in 3^n$, if
  $(\t\rho^0,\t\rho^1)$ does not positively progress on any color $j\in 3$
compared to $(\rho^0,\rho^1)$, then
 neither does $(ext(\t\rho^0),ext(\t\rho^1))$.
\end{itemize}

\begin{lemma}\label{limitlemcomb0}
For every $k\in \omega$, $ n\leq \h n\in \omega$, 
every $\rho\in k^n,\h\rho\in k^{\h n}$,
there is a disjoint preserving extension
over $(\bot,\bot)$, namely  $ext:k^n\rightarrow k^{\h n}$
such that $ext(\rho) = \h\rho$.
\end{lemma}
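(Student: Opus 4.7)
The plan is to exhibit an explicit algebraic $ext$. First I would unpack what "disjoint preserving over $(\bot,\bot)$" means: since $\bot^{-1}(j)=\emptyset$ for every $j$, the condition "does not positively progress on any colour" reduces to being \emph{disjoint}, i.e., differing at every coordinate. So what is required is an injective $ext:k^n\to k^{\hat n}$ with $ext(\tau)\succeq\tau$, $ext(\rho)=\hat\rho$, that sends pointwise-different pairs to pointwise-different pairs. (Implicit in the statement is $\hat\rho\succeq\rho$; otherwise $ext(\rho)=\hat\rho\succeq\rho$ is inconsistent.)

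The degenerate cases $k\leq 1$, $n=0$, or $\hat n=n$ dispose of themselves, so I may assume $k\geq 2$ and $1\leq n<\hat n$. Identifying the colour set with $\mathbb{Z}/k\mathbb{Z}$, I would define
\[
ext(\tau)(i)\;=\;\begin{cases}\tau(i),& i<n,\\ \tau(0)+\bigl(\hat\rho(i)-\rho(0)\bigr)\pmod{k},& n\leq i<\hat n.\end{cases}
\]

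Three short verifications then finish the argument. (a) Substituting $\tau=\rho$ makes the shift collapse to $\hat\rho(i)$ at each $i\geq n$, so $ext(\rho)=\hat\rho$. (b) The first $n$ coordinates coincide with $\tau$, yielding $ext(\tau)\succeq\tau$ and injectivity. (c) For disjoint $\tau_0,\tau_1$ we have $\tau_0(0)\neq\tau_1(0)$, hence $ext(\tau_0)(i)-ext(\tau_1)(i)=\tau_0(0)-\tau_1(0)\not\equiv 0\pmod{k}$ at every extension coordinate $i\geq n$, and the extensions remain pointwise distinct on those coordinates as well. There is no real obstacle; the one observation worth flagging is that a single coordinate-wise shift keyed to $\tau(0)$ simultaneously propagates disjointness into every extension position, so no per-coordinate choice of projection $j_i<n$ is required.
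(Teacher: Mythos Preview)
Your proof is correct and is essentially the paper's approach made explicit: the paper's proof is the single sentence ``The function $ext$ could simply be a point wise isomorphism,'' and your translation $x\mapsto x+(\hat\rho(i)-\rho(0))\pmod k$ at each extension coordinate is precisely such a coordinate-wise bijection. Your verifications (a)--(c) fill in the routine details the paper omits, and your remark that $\hat\rho\succeq\rho$ is an implicit hypothesis is also correct.
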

\begin{proof}
The function 
$ext$ could simply be a point wise isomorphism.  
\end{proof}

Let $k\in \omega$, and for each $i\in 2$,
let $f_i:3^{<\omega}\rightarrow \mcal{P}(k)$
be two functions.
For a $\rho\in 3^{<\omega}$, a $j\in k$,
we say $(\rho,j)$ is \emph{\good}\ for $f_i$ iff:
for every $\h\rho\succeq \rho$, $j\in f_i(\h\rho)$.

\begin{lemma}\label{limitlemcomb}
 Suppose
for every $\h\rho\succeq\rho$, $f_i(\h\rho)\subseteq f_i(\rho)\ne\emptyset$
for all $i\in 2$.
Then for every $n$, every  
set $A\subseteq 3^n$,
there exist an $\h n\in \omega$,
a disjoint preserving extension over $(\bot,\bot)$
  $ext:A\rightarrow 3^{\h n}$
such that for every $\rho\in A$, every $j\in k$, every $i\in 2$,
either $(ext(\rho),j)$ is \good\ for $f_i$
or $j\notin f_i(ext(\rho))$.

\end{lemma}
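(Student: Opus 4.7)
The plan is to build $ext$ iteratively, resolving one triple $(\rho,j,i)\in A\times k\times 2$ at a time via Lemma \ref{limitlemcomb0}, and composing the resulting extensions.

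Two preliminary observations guide the construction. First, disjoint preserving extensions over $(\bot,\bot)$ compose: if $e_0:A\to 3^{n_0}$ and $e_1:3^{n_0}\to 3^{n_1}$ are both disjoint preserving over $(\bot,\bot)$, then so is $e_1\circ e_0:A\to 3^{n_1}$, because disjointness of a pair $\tilde\rho^0,\tilde\rho^1\in A$ is preserved by $e_0$ into $3^{n_0}$ and then by $e_1$ into $3^{n_1}$. Second, by the monotonicity hypothesis $f_i(\hat\rho)\subseteq f_i(\rho)$ for $\hat\rho\succeq\rho$, the resolution of a triple persists under further extension: if $\hat\rho\succeq\rho$ and $(\rho,j)$ is \good\ for $f_i$ then so is $(\hat\rho,j)$, and if $j\notin f_i(\rho)$ then $j\notin f_i(\hat\rho)$.

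Starting from $ext_0$ equal to the inclusion $A\hookrightarrow 3^n$, I would run the following loop. Call a triple $(\rho,j,i)$ \emph{unresolved at stage $t$} if $j\in f_i(ext_t(\rho))$ but $(ext_t(\rho),j)$ is not \good\ for $f_i$. If no triple is unresolved, output $ext=ext_t$ and stop. Otherwise pick any unresolved $(\rho,j,i)$; by the definition of \good\ there is $\hat\rho\succeq ext_t(\rho)$ with $j\notin f_i(\hat\rho)$. Apply Lemma \ref{limitlemcomb0} (with $k=3$) to produce a disjoint preserving extension $e:3^{n_t}\to 3^{|\hat\rho|}$ over $(\bot,\bot)$ with $e(ext_t(\rho))=\hat\rho$, and set $ext_{t+1}=e\circ ext_t$.

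By the first observation each $ext_t$ is disjoint preserving over $(\bot,\bot)$. By the second, every triple that has been resolved at some earlier stage remains resolved, and the triple chosen at stage $t$ becomes resolved at stage $t+1$ since $j\notin f_i(\hat\rho)=f_i(ext_{t+1}(\rho))$. Because there are only $2k|A|$ triples in total, the loop terminates in finitely many stages, and the output $ext$ satisfies the conclusion of the lemma. The only delicate point is verifying the second observation — that later extensions cannot resurrect an excluded color nor destroy an already \good\ pair — but both cases reduce immediately to the downward monotonicity of $f_i$ along $\succeq$.
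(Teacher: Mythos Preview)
Your proof is correct and follows essentially the same strategy as the paper's: iteratively resolve elements, invoking Lemma \ref{limitlemcomb0} at each stage to extend the remaining strings in a disjointness-preserving way, and using the monotonicity of the $f_i$ to ensure that past resolutions persist. The only cosmetic difference is granularity: the paper handles one $\rho\in A$ per step (choosing a single $\hat\rho\succeq\rho$ that simultaneously resolves all pairs $(j,i)$ for that $\rho$), whereas you handle one triple $(\rho,j,i)$ per step; both terminate in finitely many stages for the same reason.
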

\begin{proof}

Suppose $A = \{\rho_0,\cdots,\rho_{n-1}\}$.
We handle each member one at a time, by that we mean
 to extend \emph{all} members,   preserving the mutually disjoint relation,
 while the handled member $\rho_m$ is extended to some $\h\rho$ 
so that $(\h\rho,j)$ is either \good\ for $f_i$ or $(\h\rho,j)\notin f_i(\h\rho)$
for all $j\in k,i\in 2$.

At step $0$, to deal with $\rho_0$, let $\h\rho_0\succeq \rho_0$ be such that 
for every $i\in 2$, every $j\in k$,
either $(\h\rho_0,j)$ is \good\ for $f_i$ or 
$(\h\rho_0,j)\notin f_i(\h\rho_0)$
.
Such $\h\rho_0$ exists by the hypothesis on  $f_i$.
Extend each  $\rho_1,\cdots,\rho_{n-1}$
to $\h\rho_1,\cdots,\h\rho_{n-1}\in 3^{|\h\rho_0|}$  respectively
so that for every $m_0,m_1<n$,
if $\rho_{m_0},\rho_{m_1}$ are disjoint,
then $\h\rho_{m_0},\h\rho_{m_1}$ are disjoint
(such extension exists by Lemma \ref{limitlemcomb0}).

At step $1$, repeat what we do in step $0$
(with $A$ replaced by $\{\h\rho_0,\cdots,\h\rho_{n-1}\}$) to deal with 
$\h\rho_1$.

Repeat this procedure until all elements are dealt with. 
In the end, we get a set $\{\rho_0^*,\cdots,\rho_{n-1}^*\}$.
Clearly  for every $m_0,m_1<n$, if $\rho_{m_0},\rho_{m_1}$
 are disjoint, then 
 $\rho_{m_0}^*,\rho_{m_1}^*$ are disjoint
 since the mutually disjoint relation is preserved throughout each step.
 
 Now consider the function $ext(\rho_m) = \rho^*_m$
 for all $m<n$.
 It's easy to see that $ext$ satisfies the conclusion.

\end{proof}

Note that for any $n\in \omega$, the function $$f: 3^{<\omega}
\ni\rho \mapsto \{\sigma\in 2^n: [(\rho,\sigma)]\cap Q\ne\emptyset \}\in \mcal{P}(2^n)$$
satisfies the hypothesis of Lemma \ref{limitlemcomb}.

\subsection{Cross constraint version cone avoidance}
\label{limitsubsec1}
Let $Q\subseteq 3^\omega\times (2^\omega)^r$ be
a $\Pi_1^0$ class that
has full projection on $3^\omega$;
let $\ch D$ be an incomputable oracle.

\begin{lemma}\label{limitlem2}
There exists $(X^0,Y^0),(X^1,Y^1)\in Q$
such that:
suppose $Y^i=(Y^i_0,\cdots,Y^i_{r-1})$, we have
 $X^0,X^1$ are almost disjoint
and $Y^0_s,Y^1_s$ are not almost disjoint for all $s<r$.
Moreover, $(X^0,Y^0)\oplus (X^1,Y^1)\ngeq_T \ch D$.
\end{lemma}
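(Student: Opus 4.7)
The plan is to adapt Lemma \ref{limitlem1}'s forcing to additionally satisfy the cone-avoidance requirements $\mcal{R}_\Phi\colon \Phi^{(X^0, Y^0) \oplus (X^1, Y^1)} \ne \ch D$ for each Turing functional $\Phi$. I will build a sequence $d_0 \supseteq d_1 \supseteq \cdots$ of pair-of-good-pair conditions satisfying the cross-constraint progress (no positive $X$-progress on any color $j \in 3$; positive $Y$-progress on each component $s<r$), and arrange for each $\mcal{R}_\Phi$ to be forced at some stage. The overall scaffolding is identical to Lemma \ref{limitlem1}; the new work is inserting cone-avoidance handling between progress steps.

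The core is an extension lemma in the style of the classical cone-avoidance basis theorem: given a condition $d$ and a functional $\Phi$, there exists a valid extension $d'$ of $d$ forcing $\mcal{R}_\Phi$. Let $V_n(d) := \{v : \text{there is a valid extension } d' \text{ of } d \text{ with } \Phi^{d'}(n)\downarrow = v\}$. If $V_n(d) = \emptyset$ for some $n$, then $d$ already forces $\Phi^{\cdot}(n)\uparrow$, and I take any valid extension preserving this via Claim \ref{limitclaim0}. If $|V_n(d)| \ge 2$ for some $n$, at most one value equals $\ch D(n)$, so I use $\ch D$ as a side oracle to select the disagreeing value and its witnessing extension as $d'$. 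The delicate remaining case is when every non-empty $V_n(d)$ is a singleton $\{v(n)\}$, where I need $v \ne \ch D$ at some argument to extract a disagreement.

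The main obstacle lies precisely in this singleton subcase: I must show that $v$ is $\emptyset$-computable, so that $v = \ch D$ contradicts $\ch D$ being incomputable. Since the ``good pair'' predicate is $\Pi_1^0$ rather than computable, a direct argument yields only $v \le_T \emptyset'$, which is compatible with $\ch D$ being merely incomputable (e.g.\ when $\ch D \le_T \emptyset'$). The resolution invokes the combinatorial machinery of section \ref{limitsubsec0}, notably Lemmas \ref{limitlemcomb0} and \ref{limitlemcomb}, to replace the $\Pi_1^0$ goodness check with a computable certification: by restricting attention to extensions obtained via disjoint-preserving maps realizing prescribed $(ext(\rho), j)$-good data at each stage (where the relevant $f_i$ is instantiated by $\rho \mapsto \{\sigma : [(\rho,\sigma)] \cap Q \ne \emptyset\}$), the enumeration of $V_n(d)$ becomes $\emptyset$-computable. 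Within this restricted family the same case analysis goes through, $v$ is $\emptyset$-computable, and $v = \ch D$ gives the sought contradiction, closing the cone-avoidance argument.
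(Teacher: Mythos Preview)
Your outline correctly identifies the framework and the relevance of the combinatorial lemmas, but the singleton case does not close as written. You claim that restricting to extensions produced via Lemmas \ref{limitlemcomb0}--\ref{limitlemcomb} makes the enumeration of $V_n(d)$ $\emptyset$-computable; it does not. The disjoint-preserving map $ext$ of Lemma \ref{limitlemcomb} is itself obtained only by an $\emptyset'$ search: for each $\rho\in A$ and each $j$ one must wait until either $j$ is seen to leave $f_i(ext(\rho))$ or goodness stabilises, and this is a genuine $\Pi_1^0$ alternative. So even in your restricted family the function $n\mapsto v(n)$ is only $\emptyset'$-computable, and $v=\ch D$ gives no contradiction when $\ch D\leq_T\emptyset'$. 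Restricting the family of admissible extensions can also shrink $V_n(d)$, so the trichotomy you run on the full $V_n(d)$ need not transfer to the restricted one.

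The paper circumvents this by enriching conditions to tuples $((\rho^0,\sigma^0),(\rho^1,\sigma^1),P,D)$ with $P\subseteq Q$ a $\Pi_1^{0,D}$ class and $D\not\geq_T\ch D$. The case split is not on $|V_n(d)|$ but on whether, for some $\tilde D$, the $\Pi_1^0$ class $\mcal T_{\tilde D}$ of trees $T\subseteq T_P$ forcing $\Psi=\tilde D$ is nonempty. If so (this absorbs your singleton case), the ordinary cone-avoidance basis theorem applied to $\{(\tilde D,T):T\in\mcal T_{\tilde D}\}$ yields $\tilde D\oplus T\not\geq_T\ch D$, and one passes to the extension with $P$ replaced by $[T]$ and $D$ by $\tilde D\oplus T$; this forces $\Psi$ to be either partial or equal to $\tilde D\ne\ch D$ globally, without ever computing any $V_n$. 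Only in the complementary case $\mcal T_{\ch D}=\emptyset$ does compactness give a finite level $N$ at which every suitable tree witnesses a disagreement with $\ch D$, and Lemma \ref{limitlemcomb} is then invoked --- not to make anything computable --- but purely existentially, to certify that among the finitely many disagreeing pairs one can extend the $3^{<\omega}$-components (in a disjoint-preserving way) to a genuine pair of good pairs.
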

\begin{proof}
For $\rho\in 3^{<\omega}$,
$\sigma\in 2^{<\omega}$,
for a closed set $P\subseteq  3^\omega\times 2^\omega$,
we say $(\rho,\sigma)$ is \emph{\good\ for $P$}
iff
for every $X\in [\rho]$,
 $\PX\cap [\sigma]\ne\emptyset$
 where $\PX=\{Y\in (2^\omega)^r: (X,Y)\in Q\}$.

A \emph{condition} in this lemma
is a tuple $((\rho^0,\sigma^0), (\rho^1,\sigma^1),P,D)$
such that  
\begin{itemize}
\item $D$ is an oracle that does not compute $\ch D$;
\item $P\subseteq 3^\omega\times (2^\omega)^r$ is a $\Pi_1^{0,D}$ class;
\item $(\rho^i,\sigma^i)\in 3^{<\omega}\times 2^{<\omega}$ is good 
for $P$ for all $i\in 2$.

\end{itemize}

As usual, a condition $((\rho^0,\sigma^0), (\rho^1,\sigma^1),P,D)$
is seen as a set of candidates of the desired $((X^0,Y^0),(X^1,Y^1))$
(also denoted as $((\rho^0,\sigma^0), (\rho^1,\sigma^1),P,D)$),
namely
\begin{align}\nonumber
&\big\{((\h X^0,\h Y^0),(\h X^1,\h Y^1))\in
(P\times P)\cap ([(\rho^0,\sigma^0)]\times [(\rho^1,\sigma^1)]):
\\ \nonumber
&\ \ \ (\h X^0,\h X^1)\text{ does not positively progress on any color
}j\in 3\text{ compared to }(\rho^0,\rho^1).
\big\}
\end{align}


A condition $((\h\rho^0,\h\sigma^0), (\h\rho^1,\h\sigma^1),\h P,\h D)$
\emph{extends} a condition 
$((\rho^0,\sigma^0), (\rho^1,\sigma^1),P,D)$
iff 
$$
((\h\rho^0,\h\sigma^0), (\h\rho^1,\h\sigma^1),\h P,\h D)
\subseteq 
((\rho^0,\sigma^0), (\rho^1,\sigma^1),P,D);$$
or equivalently:
\begin{itemize}
\item $(\h\rho^0,\h\rho^1)$ does not positively progress on
any color $j\in 3$ compared to $(\rho^0,\rho^1)$;

\item $(\h\rho^i,\h\sigma^i)\succeq(\rho^i,\sigma^i)$
for each $i\in 2$ and $\h P\subseteq P$.
\end{itemize}

The requirement we want to satisfy is: for each Turing functional 
$\Psi$,
$$
\mcal{R}_\Psi: \Psi^{(X^0,Y^0)\oplus (X^1,Y^1)}\ne\ch D.
$$
A condition $((\rho^0,\sigma^0), (\rho^1,\sigma^1),P,D)$ 
\emph{forces} $\mcal{R}_\Psi$
iff every member $((\h X^0,\h Y^0),(\h X^1,\h Y^1))$
in $ ((\rho^0,\sigma^0), (\rho^1,\sigma^1),P,D)$ satisfies $\mcal{R}_\Psi$.

Fix a Turing functional $\Psi$, a condition  $((\rho^0,\sigma^0), (\rho^1,\sigma^1),P,D)$,
we prove that:
\begin{claim}\label{limitclaim2}
There exists an extension of  $((\rho^0,\sigma^0), (\rho^1,\sigma^1),P,D)$
forcing $\mcal{R}_\Psi$.
\end{claim}
\begin{proof} 
For simplicity, assume $|\rho^0|=|\rho^1|, |\sigma^0|=|\sigma^1|$,
otherwise extend them to be so;
and suppose $D=\emptyset$.
Let $T_P$ be a computable tree in $3^{<\omega}\times 2^{<\omega}$ defining $P$
so that for every $\rho\in 3^{<\omega}$,
the set $\{\sigma\in 2^{<\omega}:(\rho,\sigma)\in T_P\}$
is a pruned tree.

For every oracle $\t D\in 2^\omega$,
consider the following set $\mcal{T}_{\t D}$ of trees
such that $T\in \mcal{T}_{\t D}$ iff $[T]$ is a condition forcing
$\Psi^{(X^0,Y^0)\oplus (X^1,Y^1)}=\t D$. More specifically,
$T\in \mcal{T}_{\t D}$ iff:
\begin{itemize}
\item $T\subseteq T_P$ and $(\rho^i,\sigma^i)$
is \good\ for $[T]$ for all $i\in 2$;

\item for every $(\t\rho^i,\t\sigma^i)
\in T\cap [(\rho^i,\sigma^i)]^\preceq$,
every $n\in\omega$,
if $(\t\rho^0,\t\rho^1)$ does not positively progress
on any color $j\in 3$ compared to $(\rho^0,\rho^1)$, then 
$\Psi^{(\t\rho^0,\t\sigma^0)\oplus (\t\rho^1,\t\sigma^1)}(n)\downarrow\rightarrow
\Psi^{(\t\rho^0,\t\sigma^0)\oplus (\t\rho^1,\t\sigma^1)}(n)= \t D(n)$.

\end{itemize}

The key note is that $\mcal{T}_{\t D}$ is
a $\Pi_1^{0,\t D}$ class uniformly in $\t D$.

\ \\

\textbf{Case 1.} For every $\t D\in 2^\omega$,
$\mcal{T}_{\t D}=\emptyset$.

In particular, $\mcal{T}_{\ch D}=\emptyset$.
By compactness, there exists an $N\in \omega$,
such that for 
every tree $T\subseteq 3^{\leq  N}\times 2^{\leq N}$
with $\ell(T)\subseteq 3^N\times 2^N$, if
\begin{align}\label{limiteq4}
&\text{$T\subseteq T_P$ and }
\text{ for every $\rho\in 3^{ N}$, 
}\\ \nonumber
&\ \ \ \text{
there exists a
$\sigma\in 2^N$ such that $(\rho,\sigma)\in T$},
\end{align}
then we have
that 
\begin{align}\label{limiteq6}
&\text{ there exist,  for each  }i\in 2, \text{ a }(\t\rho^i,\t\sigma^i)
\in \ell(T)\cap [(\rho^i,\sigma^i)]^\preceq,
\text{ an } m\in\omega,
\text{ such that }
\\ \nonumber
& (\t\rho^0,\t\rho^1)
\text{ does not positively progress
on any color $j\in 3$ compared to }(\rho^0,\rho^1),
\text{ and } 
\\ \nonumber
&\Psi^{(\t\rho^0,\t\sigma^0)\oplus (\t\rho^1,\t\sigma^1)}(m)\downarrow\ne \t D(m).
\end{align}

Now we illustrate how to use Lemma \ref{limitlemcomb}.
Let $k= 2^{N-|\sigma^0|}$; let $f_i:3^{<\omega}\rightarrow \mcal{P}(2^{N-|\sigma^0|})$
be such that 
for every $\rho\in 3^{<\omega}$,
\begin{align}\nonumber
f_i(\rho) = \{\sigma\in 2^{N-|\sigma^0|}: (\rho^i\rho, \sigma^i\sigma)\in T_P\}.
\end{align}
Since for each $i\in 2$, $(\rho^i,\sigma^i)$ is \good\ for $P$,
 $f_i$ satisfies the hypothesis of Lemma \ref{limitlemcomb} for all $i\in 2$.
Let $A = 3^{N-|\rho^0|}$  and let $ext:3^{N-|\rho^i|}\rightarrow 3^{<\omega} $
 be as in the conclusion of
Lemma \ref{limitlemcomb}.
 Let $g_i:3^{N-|\rho^i|}\rightarrow 2^{N-|\sigma^i|}$
 be such that $g_i(\rho)\in f_i(ext(\rho))$
for all $\rho\in 3^{N-|\rho^0|}$ and all $i\in 2$.
Which means, by definition of $ext$,
\begin{align}\label{limiteq12}
\text{ $(ext(\rho),g_i(\rho))$ is \good\ for $f_i$
for all  $\rho\in 3^{N-|\rho^0|}$ and all $i\in 2$.
}
\end{align}

Now take $T\subseteq 3^{\leq N}\times 2^{\leq N}$
to be   a tree generated by the downward closure
of the following set:
\begin{align}\nonumber
B=\{(\rho,\sigma)\in 3^N\times 2^N: 
\text{ for some }i\in 2, \text{ some }\rho', 
\rho=\rho^i\rho' \text{ and }\sigma= \sigma^i g_i(\rho')\}.
\end{align}

First we note that $T\subseteq T_P$ and satisfies (\ref{limiteq4}).
To see this, for each $(\rho,\sigma)\in B$,
suppose $\rho=\rho^i\rho'$ and $\sigma = \sigma^i g_i(\rho')$.
By definition of $g_i$, $g_i(\rho')\in f_i(ext(\rho'))\subseteq f_i(\rho')$.
Therefore, by definition of $f_i$, $(\rho,\sigma)\in T_P$.

By (\ref{limiteq6}),
let 
$(\t\rho^i,\t\sigma^i)
\in B\cap [(\rho^i,\sigma^i)]^\preceq$ be such that for some 
$n\in\omega$, $(\t\rho^0,\t\rho^1)$
does not positively progress
on any color $j\in 3$ compared to $(\rho^0,\rho^1)$
and 
$\Psi^{(\t\rho^0,\t\sigma^0)\oplus (\t\rho^1,\t\sigma^1)}(m)\downarrow\ne \ch D(m)$.

We now apply the conclusion of Lemma \ref{limitlemcomb} on $ext$ to
show that there are $\h\rho^i\succeq\t\rho^i$
such that 
\begin{itemize}
\item $(\h\rho^i,\t\sigma^i)$
is \good\ for $P$ for all $i\in 2$; and
\item  $(\h\rho^0,\h\rho^1)$
does not positively progress on any color $j\in 3$ compared to $(\rho^0,\rho^1)$.
\end{itemize}
By definition of $B$,
 suppose $\t\rho^i=\rho^i\rho'_i$ and
  $\t\sigma^i= \sigma^i g_i(\rho'_i)$.
 It is clear that $\rho'_0,\rho'_1$
 are disjoint since $(\t\rho^0,\t\rho^1)$
does not positively progress
on any color $j\in 3$ compared to $(\rho^0,\rho^1)$
(and since $|\rho^0|=|\rho^1|$).
By definition of $ext$  (where $ext$ is required to be a disjoint preserving extension
over $(\bot,\bot)$),
  \begin{itemize}
  \item $ext(\rho'_i)\succeq \rho'_i$ for all $i\in 2$;
  \item $ext(\rho'_0),ext(\rho'_1)$ are disjoint
and  
\item $(ext(\rho'_i), g_i(\rho'_i))$
is \good\ for $f_i$. 
\end{itemize}

Unfolding the definition of $f_i$, $(ext(\rho'_i),g_i(\rho'_i))$
is \good\ for $f_i$
 means for every $\rho''\in [ext(\rho'_i)]^\preceq$,
 $g_i(\rho'_i)\in f_i(\rho'')$, i.e.,
  for every $\rho''\in [ext(\rho'_i)]^\preceq$,
  $(\rho^i\rho'',\t\sigma^i)=
  (\rho^i\rho'',\sigma^i g_i(\rho'_i))\in T_P$.
  Thus, let $$\h\rho^i= \rho^i ext(\rho'_i),$$
  we have $(\h\rho^i,\t\sigma^i)$
  is \good\ for $P$.
  Obviously, $\h\rho^i\succeq \rho^i$ for all $i\in 2$.
  On the other hand, since $ext(\rho'_0),ext(\rho'_1)$
  are disjoint and $|\rho^0|=|\rho^1|$,
   therefore $(\h\rho^0,\h\rho^1)$ does not positively progress 
   on any color $j\in 3$ compared to $(\rho^0,\rho^1)$.
  
  In summary,  $((\h\rho^0,\t\sigma^0), (\h\rho^1,\t\sigma^1),P,D)$
  is the desired extension of $((\rho^0,\sigma^0),(\rho^1,\sigma^1),P,D)$
  forcing the requirement in a deterministic way.

\ \\

\textbf{Case 2.} There exists a $\t D$ such that $\mcal{T}_{\t D}\ne\emptyset$.

Note that the set of $\t D$ such that $\mcal{T}_{\t D}\ne\emptyset$
consists of a $\Pi_1^0$ class.
By cone avoidance of $\Pi_1^0$ class,
there exists a $\t D$, a $T\in \mcal{T}_{\t D}$
such that $\t D\oplus T\ngeq_T \ch D$.
Let $\h P=[T]$. 
By definition of $\mcal{T}_{\t D}$,
$(\rho^i,\sigma^i)$ is \good\ for $\h P$.
Thus,
 $((\rho^0,\sigma^0), (\rho^1,\sigma^1),\h P,\t D\oplus T)$
is a condition extending 
$((\rho^0,\sigma^0), (\rho^1,\sigma^1),P,D)$.
To see that  $((\rho^0,\sigma^0), (\rho^1,\sigma^1),\h P,\t D\oplus T)$
forces $\mcal{R}_\Psi$, we note that 
for every $((\h X^0,\h Y^0),(\h X^1,\h Y^1))\in ((\rho^0,\sigma^0), (\rho^1,\sigma^1),\h P,\t D\oplus T)$,
if $\Psi^{(\h X^0,\h Y^0)\oplus(\h X^1,\h Y^1)}$ is total,
then $\Psi^{(\h X^0,\h Y^0)\oplus(\h X^1,\h Y^1)}=\t D\ne \ch D$.
Thus we are done in this case.

\end{proof}

A condition $((\rho^0,\sigma^0), (\rho^1,\sigma^1),P,D)$
\emph{\exclude} component $s$ if it cannot  be extended
to make positive progress on  the $s^{th}$ component.
More specifically, for every condition
$((\h\rho^0,\h\sigma^0), (\h\rho^1,\h\sigma^1),\h P,\h D)$
extending $((\rho^0,\sigma^0), (\rho^1,\sigma^1))$,
 $(\h{\sigma}^0,\h{\sigma}^1)$
does not positively progress on any color
on the $s^{th}$ component compared to $(\sigma^0,\sigma^1)$.

Given a pair $(\rho,\sigma)$,
a closed set $P\subseteq 3^\omega\times (2^\omega)^r$ 
we say $(\rho,\sigma)$ \emph{lock} the $s^{th}$ component
iff: there exists a $Y\in 2^\omega$ such that 
for every 
 $(\h\rho,\h\sigma)\succeq(\rho,\sigma)$ that is 
 \good\ for $ P$, $\h\sigma_s\prec Y$
 where $\h\sigma =(\h\sigma_0,\cdots,\h\sigma_{r-1})$.

 Let $I\subseteq r$ be a maximal set such that
there exists a $D\ngeq_T \ch D$, a 
$\Pi_1^{0,D}$ class $P\subseteq Q$,
a pair $(\rho,\sigma)$ \good\ for $P$ such that 
$(\rho,\sigma)$
locks the $s^{th}$ component for all $s\in I$.
Starting with  the condition
$d_0= ((\rho,\sigma), (\rho,\sigma),P,D)$,
note that
for any extension $ d$
of $d_0$,
 $d$
does not \exclude\ any component
(see Claim \ref{limitclaim0}).

Thus combine with Claim \ref{limitclaim2},
there is a sequence of conditions
$d_0\supseteq d_1\supseteq \cdots$
such that every requirement is forced by some $d_t$;
moreover, let 
$((X^0,Y^0),(X^1,Y^1)) = \cup_t((\rho^0_t,\sigma^0_t),(\rho^1_t,\sigma^1_t))$,
we have that $Y^0_s,Y^1_s$ are not almost disjoint for all $s<r$
since we can make positive progress on each component infinitely often;
while $X^0,X^1$ are almost disjoint by definition of extension.

\end{proof}

\subsection{A notion of hyperimmune}
\label{limitsubsec2}
\def\onestep{one step variation}
\def\treecomputation{tree-computation path}
\def\tcp{tcp}
\def\approximation{approximation}
\def\overr{over}

For two finite trees
$T_0, T_1$ in $\omega^{<\omega}$, we say $T_1$ 
is a \emph{\onestep}\ of $T_0$
iff:
\begin{itemize}
\item either there is a $\xi\in \ell(T_0)$, a finite non empty set 
$B\subseteq \omega^{|\xi|+1}$
such that $T_1= T_0\cup B$;
\item or there is a non leaf $\xi\in T_0$,
a non empty set $B\subsetneq T_0\cap [\xi]^\preceq\cap \omega^{|\xi|+1}$ such that
$T_1 = (T_0\setminus [\xi]^\prec)\cup B$. 
\end{itemize}

The key fact in this definition is that the set $B$ in the or case
is non empty. This will be used in Lemma \ref{limitlem3}.

Given a poset $(W,\prec_p)$ with a root,
a \emph{\treecomputation} in $W$
is a sequence of pairs $(T_0,\phi_0),(T_1,\phi_1),\cdots$ (finite or infinite)
for every $u\in\omega$, $T_u$  is a finite tree (possibly empty) in $\omega^{<\omega}$;
$\phi_u: T_u\cup \ell(T_u)\rightarrow W$ is a function such that,
\begin{itemize}
\item $T_0= \emptyset$;
\item for  every $\h\xi,\xi\in T_u$
with $\h\xi\succ\xi$, $\phi_u(\h\xi)\succ_p \phi_u(\xi)$;

\item   $T_{u+1}$ is a  \onestep\
of $T_u$; moreover,

\item $\phi_{u+1}\uhr T_{u+1}\cap T_u = \phi_u\uhr T_{u+1}\cap T_u$.

\item $\phi_u(\bot) = $ the root of $W$.

\end{itemize}
In our application, the function $\phi_u$
will be given by a Turing functional we  diagonal against;
a  tree in the sequence represent a tree of initial segments.
The either case of \onestep\ represents the case that for some 
initial segment, which is the leaf of the tree, sufficiently many
extension makes the Turing functional's computation progress;
the or case  represents that it is found that some initial 
segment cannot be extended to be the next condition.

A simple observation is 
\begin{lemma}\label{limitlem3}
If $(W,\prec_p)$ is well founded,
 then there is no infinite \treecomputation\ in $W$.
\end{lemma}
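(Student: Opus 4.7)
The plan is to derive a contradiction with well foundedness by manufacturing, from an infinite \treecomputation, an infinite strictly $\prec_p$-ascending chain in $W$. Suppose then that $(T_u,\phi_u)_{u\in\omega}$ is an infinite \treecomputation\ in $W$.

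The first step is a bookkeeping lemma, proved by induction on $|\xi|$: for each $\xi\in\omega^{<\omega}$ the number of times $\xi$ is added (that is, belongs to $T_{u+1}\setminus T_u$) is finite, and so is the number of case (b) events performed at $\xi$. The key observation is that a case (a) at $\xi$ requires $\xi$ to be a leaf, and each leaf-moment of $\xi$ is initiated either by an addition of $\xi$ or by a case (b) at a proper ancestor which happens to keep $\xi$ in its $B$; each case (b) at an ancestor strictly shrinks that ancestor's children set, so within one internal-phase only finitely many can occur, and the inductive hypothesis controls everything at smaller depth.

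Now set $T_\infty^\cup:=\bigcup_u T_u\subseteq\omega^{<\omega}$. This is a prefix-closed tree, and the bookkeeping lemma says each node has only finitely many immediate successors (one finite $B$ per case (a) at that node, and only finitely many case (a)'s). If $T_\infty^\cup$ were finite the total number of transitions $\sum_\xi[\#\text{case (a) at }\xi + \#\text{case (b) at }\xi]$ would be a finite sum of finite numbers, contradicting the infinitude of $(T_u,\phi_u)$. Hence $T_\infty^\cup$ is an infinite locally finite tree, and K\"onig's lemma produces an infinite path $\bot\prec\xi_1\prec\xi_2\prec\cdots$.

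Finally, to turn this into a $\prec_p$-chain, let $u_j$ be the earliest moment with $\xi_j\in T_{u_j}$; by prefix-closure $\xi_0,\dots,\xi_j\in T_{u_j}$ and monotonicity of $\phi_{u_j}$ along prefixes yields $\phi_{u_j}(\xi_0)\prec_p\cdots\prec_p\phi_{u_j}(\xi_j)$. Since each $\xi_i$ has only finitely many life-cycles, the set $V_i:=\{\phi_u(\xi_i):\xi_i\in T_u\}$ of realised labels is finite. Form the tree whose level-$i$ vertices are ascending tuples $(v_0,\dots,v_i)\in V_0\times\cdots\times V_i$ occurring as $(\phi_{u_j}(\xi_0),\dots,\phi_{u_j}(\xi_i))$ for infinitely many $j\geq i$; pigeonhole forces every level non-empty and finite branching together with a second application of K\"onig's lemma yield an infinite path through this tree, which is exactly an infinite $\prec_p$-ascending chain in $W$---contradicting well foundedness. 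The hardest part is the initial bookkeeping, since the wipe-and-reappear dynamics couples additions, leaf-moments, internal-phases, and case (b) events, so the induction must propagate all of these finite bounds together through the depth.
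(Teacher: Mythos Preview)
Your proof is correct, but it takes a more circuitous route than the paper's. The paper argues directly that there exists an $X \in \omega^\omega$ and stages $u_0 < u_1 < \cdots$ with $X\uhr n \in T_v$ for all $v \geq u_n$---that is, an \emph{eventually permanent} path. The construction is a straight induction using the nonempty-$B$ clause: the root is permanent, and once a node $\xi$ is permanent with no further or-cases at strict ancestors, at most one either-case and then only finitely many or-cases can occur at $\xi$ (each strictly shrinking the children), after which $\xi$'s nonempty child set is frozen and any surviving child is itself permanent. Because labels are inherited on $T_{u+1}\cap T_u$, the label of $X\uhr n$ is constant from stage $u_n$ on, so the values $\phi_{u_n}(X\uhr n)$ already form the strictly $\prec_p$-increasing chain---no second K\"onig is needed.

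Your approach instead passes through the union tree $\bigcup_u T_u$, whose path need not consist of permanent nodes; a node along it can be removed and re-added, picking up different labels in different life-cycles. That is exactly what forces your secondary K\"onig step on label-tuples. The paper's route is more economical: a single stabilization argument (children of a permanent node freeze) replaces both your bookkeeping lemma and the second K\"onig. Your bookkeeping lemma is a correct standalone fact, and your handling of the label ambiguity is sound, but for this particular lemma both are doing more work than necessary; targeting the limit-inferior tree of permanent nodes rather than the union tree collapses the argument to a few lines.
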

\begin{proof}
The key fact here is that in the or case of \onestep,
the set $B$ is non empty.
Therefore, suppose $(T_0,\phi_0),\cdots$
is an infinite \treecomputation\ in $W$.
Then there is an $X\in \omega^\omega$ and a
sequence of integers $u_0<u_1<\cdots$
 such that $X\uhr n\in T_{v}$
for all $n,v\geq u_n$.
Since $\phi_{u+1}$ inherits $\phi_u$ by definition of 
\treecomputation, therefore
$\phi_{u_{n+1}}(X\uhr n+1)\succ_p \phi_{u_n}(X\uhr n)$.
Thus,
a contradiction to the well foundness of $W$.
\end{proof}

\begin{definition}[The $\Gamma_m$ space]
We inductively define the following set $\Gamma_m$.
\begin{enumerate}
\item
Let $\Gamma_0$ be the set of partial functions 
from $\omega$ to $3$ with finite domain.
Although $\Gamma_0$ admit a natural partial order,
we here define $\prec_{p_0}$ on $\Gamma_0$
so that every two elements of $\Gamma_0$
 are incomparable provided they have non empty domain;
 and the root of $\Gamma_0$ is the partial function
 with empty domain.

\item Suppose we have inductively defined $\Gamma_0,\cdots,\Gamma_{m-1}$
where each is a poset with a root.
Define 
\begin{align}\nonumber
\text{ $\Gamma_m$ as the set of
 finite \treecomputation\ in $\Gamma_{m-1}$.}
 \end{align}
The root of $\Gamma_m$ is clearly
the singleton $(\emptyset,\phi)$
(by definition of \treecomputation,
$\phi$ is the function with domain $\{\bot\}=\ell(\emptyset)$
and $\phi(\bot) = $ the root of $\Gamma_{m-1}$). 
 \item Since each element of $\Gamma_m$ is a sequence, it makes
 sense to say one \treecomputation\ in $\Gamma_{m-1}$ 
 is an initial segment of the other.
 This give rise to a natural partial order $\prec_{p_m}$ on $\Gamma_m$
 where $\tcp_0\prec_{p_m}\tcp_1$ means 
 $\tcp_0$ is an initial segment of $\tcp_1$.

\item For a $c\in \Gamma_0$, we say $c$ is \emph{\overr}
$n$ if $dom(c)\subseteq (n,\infty)$.

\item Suppose we have defined \overr\ $n$  for elements in $\Gamma_{m-1}$,
for a 

$\tcp= ((T_0,\phi_0),\cdots,(T_{u-1},\phi_{u-1}))\in \Gamma_m$, we say $\tcp$ is \overr\ $n$
if for every $v<u$, every $\xi\in T_v\cup\ell(T_v)$,
$\phi_v(\xi)$ is \overr\ $n$.

\end{enumerate}
\end{definition}

According to Lemma \ref{limitlem3}, by induction,  it is direct to see
\begin{lemma}\label{limitlem4}
For every $m$, $\Gamma_m$
is well founded.
\end{lemma}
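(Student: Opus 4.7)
The plan is a straightforward induction on $m$, with Lemma \ref{limitlem3} supplying the work in the inductive step.

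Base case ($m=0$): The poset $(\Gamma_0,\prec_{p_0})$ has very simple shape—the root is the empty partial function and lies below every other element, and any two elements of nonempty domain are declared incomparable. Thus every $\prec_{p_0}$-chain has length at most $2$, so $\Gamma_0$ is trivially well founded.

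Inductive step: Assume $\Gamma_{m-1}$ is well founded. I will show $\Gamma_m$ admits no infinite strictly ascending chain $\tcp_0\prec_{p_m}\tcp_1\prec_{p_m}\cdots$. Since $\prec_{p_m}$ is by definition the proper initial-segment relation on finite sequences of pairs $(T,\phi)$, the lengths $|\tcp_i|$ strictly increase, and the coherence clause $\phi_{u+1}\uhr T_{u+1}\cap T_u = \phi_u\uhr T_{u+1}\cap T_u$ propagates along the chain, so the union $\bigcup_i \tcp_i$ assembles into a genuine infinite sequence $(T_0,\phi_0),(T_1,\phi_1),\ldots$ satisfying every clause in the definition of a \treecomputation\ in $\Gamma_{m-1}$. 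Lemma \ref{limitlem3}, applied with $W=\Gamma_{m-1}$, rules out any infinite \treecomputation\ in a well founded poset; this contradicts the inductive hypothesis.

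The only point requiring verification is that the union of the ascending chain is itself a \treecomputation—i.e., that each consecutive pair $(T_u,\phi_u),(T_{u+1},\phi_{u+1})$ in the assembled sequence still satisfies the \onestep\ condition and that the $\phi_u$ cohere. Both facts are immediate from the initial-segment definition of $\prec_{p_m}$, so the step is mechanical rather than conceptual; I do not anticipate a genuine obstacle beyond unpacking the definitions.
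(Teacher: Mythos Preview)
Your proposal is correct and matches the paper's own argument: the paper simply says the lemma follows ``by induction'' from Lemma~\ref{limitlem3}, and your base case and inductive step spell out exactly this. The only point worth noting is that ``well founded'' in this paper means no infinite \emph{ascending} chain (see the proof of Lemma~\ref{limitlem3}), which is the reading you correctly adopted.
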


\begin{definition}[Diagonal against]
For a partial function $C$ from $\omega$ to $3$,
\begin{enumerate}
\item 
We say $C$ \emph{diagonal against} a $c\in \Gamma_0$ iff
$C\uhr dom(c) = c$.

\item Suppose we have defined what it means
 for $C$ diagonal against $\tcp$ when  $\tcp\in \Gamma_{m-1}$.
 For a 
 $\tcp= ((T_0,\phi_0),\cdots,(T_{u-1},\phi_{u-1}))\in \Gamma_m$,
 we say $C$ \emph{diagonal against}  $\tcp$
 iff  
 there exists a $\xi\in \ell(T_{u-1})$,
 such that $C$ diagonal against $\phi_{u-1}(\xi)$.

\end{enumerate}

\end{definition}
Intuitively, $C$ diagonal against $\tcp$ means
$C$ agree with some partial function $\omega\rightarrow 3$
involved in some top node of $T_{s-1}$.
It can be shown by induction that if 
$\tcp\in \Gamma_m$ is over $n$, then there is a 
partial function $C$ from $\omega$ to $3$ 
with a finite domain $dom(C)\subseteq (n,\infty)$
such that $C$ diagonal against $\tcp$.

\begin{definition}[$\Gamma$-approximation]

For each $m\in\omega$, a $\Gamma_m$-\emph{approximation}
is a function $f:\omega\times \omega\rightarrow \Gamma_m$ such that 
for every $n,s\in\omega$,  $f(n ,s)$ is \overr\ $n$,
$f(n,s+1)\succeq_{p_m} f(n,s)$ and $f(n,0) = (\emptyset,\phi)$.
A \emph{$\Gamma$-approximation} is a $\Gamma_m$-approximation for some $m$.

\end{definition}

Note that by  Lemma \ref{limitlem4},
for every $\Gamma$-approximation $f$, every 
$n$, there is an $s$ so that 
the computation of $f(n,\cdot)$ converges
to $f(n,s)$, i.e., $f(n,t)=f(n,s)$
for all $t\geq s$; denote  this $f(n,s)$ as $f(n)$.

\begin{definition}[$\Gamma$-hyperimmune]
A $3$-coloring $C$ is \emph{$\Gamma$-hyperimmune} relative 
to a Turing degree $D$
 iff for every $D$-computable $\Gamma$-approximation $f$,
 there exists an $n\in\omega$ such that 
  $C$ diagonal against $f(n)$.
 When $D$ is computable, we simply say $C$
 is $\Gamma$-hyperimmune.

\end{definition}

Obviously, $\Gamma$-hyperimmune generalizes the idea of hyperimmune
and implies hyperimmune. 
Due to Lemma \ref{limitlem4},
\begin{lemma}\label{limitlem7}
There exists a $\Delta_2^0$
$3$-coloring $C\in 3^\omega$ that is 
$\Gamma$-hyperimmune.

\end{lemma}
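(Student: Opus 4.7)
The plan is a standard $\emptyset'$-computable diagonalization, building $C$ in stages that each commit $C$ on a fresh finite block. Enumerate all pairs $(e,m)$ and interpret the $e$-th partial computable function $\varphi_e$ as a candidate $\Gamma_m$-\approximation\ $f$. At stage $\langle e,m\rangle$, if $\varphi_e$ fails to satisfy the defining clauses of a $\Gamma_m$-\approximation\ (\overr-ness of $f(n,s)$, monotonicity $f(n,s+1)\succeq_{p_m} f(n,s)$, and $f(n,0)=(\emptyset,\phi)$), do nothing; otherwise choose $n_{\langle e,m\rangle}$ strictly above every integer already entered in $\mathrm{dom}(C)$ at previous stages.

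By Lemma \ref{limitlem4}, $\Gamma_m$ is well founded, so the sequence $\bigl(f(n_{\langle e,m\rangle},s)\bigr)_{s}$ stabilizes; using $\emptyset'$ we locate the stabilization and read off the limit $\tcp := f(n_{\langle e,m\rangle}) \in \Gamma_m$. Since $\tcp$ is \overr\ $n_{\langle e,m\rangle}$, by the inductive observation recorded immediately after the definition of ``diagonal against'' there exists a partial function $c$ with finite domain contained in $(n_{\langle e,m\rangle},\infty)$ that diagonalizes against $\tcp$. A witness $c$ can be extracted $\emptyset'$-uniformly by induction on the level: at level $\Gamma_{m'}$ pick any $\xi\in \ell(T_{u-1})$ (where $\tcp=((T_0,\phi_0),\ldots,(T_{u-1},\phi_{u-1}))$) and recurse on $\phi_{u-1}(\xi)\in\Gamma_{m'-1}$, terminating at $\Gamma_0$ where the partial function $c$ itself diagonalizes against itself. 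We then commit $C\uhr \mathrm{dom}(c)=c$.

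Because the thresholds $n_{\langle e,m\rangle}$ strictly exceed the finitely many previously committed positions, the commitments made across stages are pairwise disjoint, so $C$ is well defined; set $C(k)=0$ at all positions never committed. Then $C$ is total and $\Delta_2^0$ since the whole construction is uniformly $\emptyset'$-computable. Finally, every genuine computable $\Gamma_m$-\approximation\ $f$ arises as $\varphi_e$ for some index $e$; at the corresponding stage $\langle e,m\rangle$ we committed $C\uhr \mathrm{dom}(c)=c$ with $c$ diagonalizing against $f(n_{\langle e,m\rangle})$, hence $C$ diagonalizes against $f(n_{\langle e,m\rangle})$, as required.

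The main obstacle is bookkeeping rather than combinatorics: one must check in the limit, via $\emptyset'$, both that $\varphi_e$ obeys the $\Gamma_m$-\approximation\ axioms at every argument and that the witness $c$ can be read off uniformly from $\tcp$. Both reduce to a syntactic recursion on the level $m$, bounded by the well-foundedness already established in Lemma \ref{limitlem4}, so no genuine difficulty arises.
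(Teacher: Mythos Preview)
Your proposal is correct and follows the same approach as the paper: enumerate candidate computable $\Gamma$-approximations, use $\emptyset'$ together with the well-foundedness of $\Gamma_m$ (Lemma~\ref{limitlem4}) to compute the limit $f(n)$ at a fresh threshold $n$, and commit $C$ on the resulting finite block so as to diagonalize. One small technical point: deciding globally whether $\varphi_e$ is a genuine $\Gamma_m$-approximation is $\Pi^0_2$ and hence not directly available to $\emptyset'$, but this is handled (as the paper's ``we can computably enumerate all computable $\Gamma$-approximations'' implicitly does) by padding each $\varphi_e$ to a total $\Gamma_m$-approximation and always acting, so no real gap arises.
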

\begin{proof}
Note that 
we can computably enumerate all computable $\Gamma$-approximations
$f_0,f_1,\cdots$.
Suppose we have defined $C$ on $[0,n]\cap \omega$.
To diagonal against a $\Gamma_m$-approximation $f$,
simply $\emptyset'$-compute $f(\h n)$ for some $\h n\geq n$.
That is, 
thanks to Lemma \ref{limitlem4},  we can 
$\emptyset'$-compute the $s$ such that $f(\h n,s)=f(\h n,t) $
for all $t\geq s$.

\end{proof}

\subsection{Preservation of the $\Gamma$-hyperimmune
for $\Pi_1^0$ class}
\label{limitsubsec3}
Fix a $\Gamma$-hyperimmune $3$-coloring $C$,
a non empty $\Pi_1^0$ class $Q\subseteq 2^\omega$.
\begin{lemma}\label{limitlem5}
There exists an $X\in Q$
such that $C$ is $\Gamma$-hyperimmune relative to $X$. 

\end{lemma}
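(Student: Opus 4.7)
The plan is to carry out a $\Pi_1^0$-class forcing, using non-empty $\Pi_1^0$ subclasses $P \subseteq Q$ as conditions and treating one pair $(\Psi, m)$ at a time, where $\Psi$ is a Turing functional and $m \in \omega$ is a level. Given a condition $P$ and such a pair, the goal is to produce a non-empty $\Pi_1^0$ subclass $P' \subseteq P$ forcing the requirement that either $\Psi^X$ fails to be a $\Gamma_m$-\approximation\ for every $X \in P'$, or there is a uniform $n$ such that for every $X \in P'$, $C$ diagonalizes against the limit $f^X(n)$ of the approximation $f^X$ produced by $\Psi^X$. Intersecting a generic decreasing sequence of such conditions then yields an $X \in Q$ witnessing the lemma.

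The main step is to attempt to compute, from the $\Pi_1^0$ class $P$ alone, a single $\Gamma_m$-\approximation\ $g$ which majorizes all the $\Psi^X$ in the following sense: whenever $\Psi^X$ is a valid $\Gamma_m$-\approximation, any $3$-coloring that diagonalizes against $g(n)$ must also diagonalize against $f^X(n)$. I would build $g(n, s)$ by induction on $s$, merging the finite \treecomputation s $\Psi^X(n, s)$ over $X \in P$; compactness of $P$ lets one define the merge as a computable function of $s$, and the well-foundedness of $\Gamma_m$ furnished by Lemma \ref{limitlem4} guarantees that the merged sequence stabilizes at every $n$. Once $g$ is in hand, $\Gamma$-hyperimmunity of $C$ produces an $n$ with $C$ diagonalizing against $g(n)$, hence against every $f^X(n)$, so the original $P$ already forces the requirement.

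If the majorization attempt fails at some stage, the failure is witnessed by a finite incompatibility between candidate \tcp s computed along different branches of the canonical tree for $P$. Such an obstruction is $\Sigma_1^0$, and it lets us prune $P$ to a non-empty $\Pi_1^0$ subclass $P'$ on which $\Psi^X$ cannot be a legitimate $\Gamma_m$-\approximation\ for any $X$. Either case forces the requirement, and iterating over all pairs $(\Psi, m)$ in the standard manner yields the desired $X \in Q$.

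The main obstacle I expect is the verification that the merge operation on \treecomputation s yields a valid \onestep\ at each stage and a valid \tcp\ overall: the definition of \onestep\ allows either extending a leaf or refining a non-leaf by replacing its children with a non-empty subset, and different $X \in P$ may pull in different directions. The fix is to perform a refinement only when it is forced by every $X$ in some clopen neighborhood of the canonical tree of $P$, so that compactness turns the merge into a uniformly computable operation; the well-foundedness of $\Gamma_m$ then guarantees that the induction on the merge terminates and that the $\Sigma_1^0$ obstruction appearing in the failure case is actually detectable in finite time.
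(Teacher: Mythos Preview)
Your overall framework matches the paper's: force with non-empty $\Pi_1^0$ subclasses of $Q$, and for each requirement build a single computable $\Gamma$-approximation $g$ from the current class $P$, then use $\Gamma$-hyperimmunity of $C$. The construction you gesture at---where the tree inside $g(n,s)$ mirrors the branching of the pruned tree $T_P$, and the \onestep s correspond either to ``every live extension of this node makes $\Psi$ progress'' or to ``this node has died in $T_P$''---is precisely the paper's Claim~\ref{limitclaim4}.

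But your stated target for $g$ is too strong, and this is where the proposal breaks. You ask that diagonalizing against $g(n)$ imply diagonalizing against $f^X(n)$ for \emph{every} $X\in P$. That cannot be arranged: by definition, $C$ diagonalizes against a \tcp\ $((T_0,\phi_0),\ldots,(T_{u-1},\phi_{u-1}))$ iff $C$ diagonalizes against $\phi_{u-1}(\xi)$ for \emph{some} leaf $\xi\in\ell(T_{u-1})$. This is existential over leaves, not universal; so a merge that places different $\Psi^{\rho}(n)$ on different leaves yields only a disjunction over branches of $T_P$, never a conjunction. (At the base level $\Gamma_0$ the obstruction is concrete: a single partial coloring cannot dominate two incompatible ones.) What the paper actually does is maintain an injection $\psi$ from the nodes of $T_{u-1}$ into $T_P$ with $P\subseteq[\psi(\ell(T_{u-1}))]$ and $\phi_{u-1}(\xi)=\Psi^{\psi(\xi)}(n)$. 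When $C$ diagonalizes against $f(n^*)$ at some leaf $\xi$, one sets $\rho^*=\psi(\xi)$ and passes to the $\Pi_1^0$ subclass
\[
P^*=\{Y\in P\cap[\rho^*]:\Psi^Y(n^*,s)=\Psi^{\rho^*}(n^*)\text{ for all }s\ge|\rho^*|\},
\]
which is non-empty exactly because the ``every extension progresses'' case never fires again at $\xi$ after stabilization. So it is $P^*$, not the original $P$, that forces $\mcal{R}_\Psi$.

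Two smaller corrections. First, $g$ lives one level above $\Psi^X$: if $\Psi^X(n,s)\in\Gamma_{m-1}$ then $g(n,s)$ is a \tcp\ in $\Gamma_{m-1}$, i.e.\ $g$ is a $\Gamma_m$-approximation; the extra level is what carries the branching of $T_P$. Second, with the construction done this way your ``failure case'' disappears: the or-case of \onestep\ (pruning when a node of $T_P$ dies) always leaves a non-empty set of surviving siblings because $P$ is non-empty, so $g$ is always a valid $\Gamma_m$-approximation and no separate obstruction mechanism is needed.
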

\begin{proof}
The \emph{condition } we use 
in this lemma is a pair 
$(\rho,P)$ where 
$\rho\in 2^{<\omega}$,
$P$ is a $\Pi_1^{0,D}$ class 
for some Turing degree $D$ 
so that $C$ is $\Gamma$-hyperimmune relative to $D$;
and $P\subseteq [\rho]$.

Each condition $(\rho,P)$ is seen as a collection
of candidates of the $X$ we construct, namely
$P$. 

A condition $(\h\rho,\h P)$ \emph{extends } a 
condition $(\rho,P)$ iff $\h P\subseteq P$.
A condition $(\rho,P)$ \emph{forces} a requirement $\mcal{R}$
iff for every $Y\in P$, $Y$ satisfies  $\mcal{R}$.

The requirement we deal with is, for each Turing functional
$\Psi$,
$$
\mcal{R}_{\Psi}:\text{ for some }n,
C\text{ diagonal against }\Psi^X(n).
$$
Here we adopt the convention that 
for each Turing functional $\Psi$, there exists a $m$,
such that 
for every oracle $Y$, every $n,s$,
$\Psi^Y(n,s)\in \Gamma_m$.

Starting with the condition $(\bot, Q)$,
we will  build a sequence of conditions
so that each requirement is satisfied, then 
  the common element of these conditions satisfies
  all requirements.
  Therefore, it suffices to show how to extend
  a given condition to force a given requirement.
  Fix a Turing functional $\Psi$,
  a condition $(\ch\rho,P)$.
  
  \begin{claim}\label{limitclaim4}
  There exists an extension of $(\ch\rho,P)$ forcing $\mcal{R}_{\Psi}$.
  \end{claim}
\begin{proof}
We adopt the convention that for every $\rho$,
every $n$, $\Psi^\rho(n,s)$ is defined iff 
$s\leq |\rho|$; and we write $\Psi^\rho(n)$ for $\Psi^\rho(n)$.
Assume that for every oracle  $Y$, every $n,s\in\omega$,
$\Psi^Y(n,s)\in \Gamma_{m-1}$;
and $\Psi^Y(n,0)$ is the root of $\Gamma_{m-1}$.
For notation simplicity, suppose $P$ is a $\Pi_1^{0,D}$ class
where $D=\emptyset$.
Let $T_P$ be a pruned, co-c.e. tree so that $P=[T_P]$.

We observe the behavior of $\Psi$ on oracles $\rho\in T_P$,
and define an $\Gamma_m$-approximation $f$ as following.
Fix an $n$.

\begin{definition}[Computing $f(n,\cdot)$]
We define $f(n,s)$ by induction on $s$.
\begin{enumerate}
\item By convention,
 $f(n,0) $ is the sequence consisting of
 $(\emptyset,\phi)$ alone.

\item Suppose we have defined 
$f(n,s) = \tcp=((T_0,\phi_0),\cdots,(T_{u-1},\phi_{u-1}))\in\Gamma_m$
and together with an injective function
$\psi:T_{u-1}\cup\ell(T_{u-1})\rightarrow 2^{<\omega}\cup\{\bot\}$
with $\psi(\bot)=\bot$ so that
\begin{align}\nonumber
&\ P \subseteq [\psi(\ell(T_{u-1}))];\\ \nonumber
&\text{ for every }\xi\in T_{u-1}, \phi_{u-1}(\xi) = \Psi^{\psi(\xi)}(n).
\\ \nonumber
&\text{ for every }\h\xi,\xi\in T_{u-1}\cup\ell(T_{u-1}),
\text{ if }\h\xi\succ\xi, \text{ then }\psi(\h\xi)\succ \psi(\xi).
\end{align}

\item Define $f(n,s+1)$ depending on which of the following cases
occur:
\begin{align}\label{limiteq8}
&\text{for some }\xi\in \ell(T_{u-1}),
\text{ for every }\rho\in [\psi(\xi)]^\preceq\cap 
T_P[s+1]\cap 2^{s+1},
\\ \nonumber
&\ \ \Psi^\rho(n)\succ_{p_{m-1}}\phi_{u-1}(\xi);
\\  \label{limiteq9}
&\text{for some }\xi\in T_{u-1}\cup\ell(T_{u-1}),
[\psi(\xi)]^\preceq\cap T_P[s+1]=\emptyset;
\\ \label{limiteq10}
&\text{otherwise}.
\end{align}

\item 
\begin{itemize}
\item In case of (\ref{limiteq8}),
suppose $[\psi(\xi)]^\preceq\cap
T_P[s+1]\cap 2^{s+1}= \{\rho_0,\cdots,\rho_{w-1}\}$.
We add a set $B\subseteq \omega^{|\xi|+1}\cap [\xi]^\preceq$
of size $w$ to $T_{u-1}$ getting $T_u$;
then define $\psi$ on $T_u$ by inheriting
$\psi$  
 and 1-1 mapping $B$  to $\{\tau_0,\cdots,\tau_{w-1}\}$;
finally, define $\phi_u$ by inheriting 
$\phi_{u-1}$ and $\phi_u(\h\xi) = \Psi^{\psi(\h\xi)}(n)$
for all $\h\xi\in B$.
Let $f(n,s+1) = \tcp^\smallfrown (T_u,\phi_u)$.
Clearly $f(n,s+1)$ is a \treecomputation\ in $\Gamma_{m-1}$
since $T_u$ is a \onestep\ of $T_{u-1}$
(fulfilling the either case). 
The inductive hypothesis on $\psi$ is easily verified.

\item  In case of (\ref{limiteq9}), suppose 
$\xi$ is the minimal string witness (\ref{limiteq9}),
i.e., non proper initial segment of $\xi$ is a witness.
Let $\h\xi$ be the immediate predecessor 
of $\xi$ and suppose 
$T_{u-1}\cap [\h\xi]^\preceq \cap \omega^{|\xi|} = 
B$.
Define $T_u$ as $(T_{u-1}\setminus [\h\xi]^\prec)\cup B$;
let $\h\psi,\phi_u$ inherit $\psi,\phi_{u-1}$ on $T_u$
respectively. Let $f(n,s+1) = \tcp^\smallfrown (T_u,\phi_u)$.
By minimality of $\xi$, $P\subseteq [\psi(\ell(T_u))]$.
The other part of inductive hypothesis is easily verified (with respect to 
$\h\psi,\phi_u$).

\item In case of (\ref{limiteq10}),
let $f(n,s+1)=f(n,s)$.

\end{itemize}

\end{enumerate}

\end{definition}

As we argued in  item (4)
of the definition of $f(n,\cdot)$,
we have $f$ is a $\Gamma_m$-approximation.
By compactness,
$f$ is computable. Since, $C$ is 
$\Gamma$-hyperimmune, there are
$n^*,s^*$ such that 
\begin{itemize}
\item $C$ diagonal against $f(n^*,s^*)$
and 
\item $f(n^*,t) =f(n^*,s^*)$ for all $t\geq s^*$.
 
\end{itemize}

Suppose $f(n^*,s^*)=((T_0,\phi_0),\cdots,(T_{u-1},\phi_{u-1}))$.
By definition of diagonal against,
there is a $\xi\in \ell(T_{u-1})$
such that $C$ diagonal against $\phi_{u-1}(\xi)$.
Consider $\rho^*= \psi(\xi)$
and the $\Pi_1^0$ class $P^*\subseteq P\cap [\rho]$ on which the computation
of $\Psi(n^*,\cdot)$ converges at $\Psi^{\rho^*}(n^*)$,
that is,
 $Y\in P^*$
iff: 
\begin{align}\nonumber
\text{ $Y\in P\cap [ \rho^*]$ and 
$\Psi^Y(n^*,s) = \Psi^{\rho^*}(n^* )$
for all $s\geq |\rho^*|$.
}\end{align}
Note that by our definition of $f$,
since $f(n^*,t) = f(n^*,s^*)$ for all $t\geq s^*$,
this means  
(\ref{limiteq8}) no longer  occurs after step $s^*$.
Therefore, $P^*\ne\emptyset$.

But $\phi_{u-1}(\xi) = \Psi^{\rho^*}(n^* )$,
therefore
we have $C$ diagonal against $\Psi^Y(n^*)$ for all $Y\in P^*$.
Thus $(\rho^*,P^*)$ is the desired extension of $(\ch \rho, P)$
forcing $\mcal{R}_\Psi$.

\end{proof}

\end{proof}

\subsection{Cross constraint version of the $\Gamma$-hyperimmune
preservation}
\label{limitsubsec4}

\def\Tcross{T^{\times}}
\def\subsufficient{sub-sufficient}

Fix a $\Gamma$-hyperimmune $3$-coloring $C$,
a $\Pi_1^0$ class $Q\subseteq 3^\omega\times (2^\omega)^r$
having full projection on $3^\omega$.
\limitlem*
\begin{proof}
The definition of condition, extension,
forcing are the same as Lemma \ref{limitlem2}.
The requirement is as in Lemma \ref{limitlem5}.
Following the proof of Lemma \ref{limitlem2}, it suffices 
to show that any condition can be extended to force a given requirement.
Fix a condition $((\ch\rho^0,\ch\sigma^0),(\ch\rho^1,\ch\sigma^1),P,D)$
and a Turing functional $\Psi$.
\begin{claim}
There is an extension of  
$((\ch\rho^0,\ch\sigma^0),(\ch\rho^1,\ch\sigma^1),P,D)$
forcing $\mcal{R}_\Psi$.

\end{claim}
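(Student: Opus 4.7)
The plan is to combine the $\Gamma$-approximation machinery of Lemma~\ref{limitlem5} with the cross-constraint bookkeeping of Lemma~\ref{limitlem2}. I will assume (WLOG) $D=\emptyset$ and $\Psi^Y(n,s)\in\Gamma_{m-1}$ for all $Y,n,s$, with $\Psi^Y(n,0)$ equal to the root of $\Gamma_{m-1}$; let $T_P\subseteq 3^{<\omega}\times 2^{<\omega}$ be a pruned co-c.e.\ tree with $[T_P]=P$. The goal is to build a computable $\Gamma_m$-approximation $f$ such that whenever $C$ diagonal against some $f(n)$, one can extract an extension of the given condition forcing $\mcal{R}_\Psi$.

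Fix $n\in\omega$. I will define $f(n,s)$ by induction on $s$, together with an injective bookkeeping function $\psi$ sending each node of the current tree $T_{u-1}$ to a pair of good-for-$P$ pairs $((\rho^0,\sigma^0),(\rho^1,\sigma^1))$ extending $((\ch\rho^0,\ch\sigma^0),(\ch\rho^1,\ch\sigma^1))$ in such a way that $(\rho^0,\rho^1)$ does not positively progress on any color $j\in 3$ compared to $(\ch\rho^0,\ch\rho^1)$, and with $\phi_u(\xi)=\Psi^{\psi(\xi)}(n)$. At stage $s+1$ I will take one of the \onestep\ actions parallel to Lemma~\ref{limitlem5}: \emph{branch} at a leaf $\xi$ when every sufficiently long cross-constraint-preserving pair-extension of $\psi(\xi)$ inside $(T_P[s+1])^2$ makes $\Psi^{\cdot}(n)$ strictly advance past $\phi_{u-1}(\xi)$, populating the children by such pair-extensions; \emph{collapse} a node whose $\psi$-image no longer admits a cross-constraint-preserving pair-extension in $(T_P[s+1])^2$; \emph{idle} otherwise. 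In the branching step I will invoke Lemma~\ref{limitlemcomb}, applied to the functions $f_i(\rho)=\{\sigma: (\rho^i\rho,\sigma^i\sigma)\in T_P\cap(3^N\times 2^N)\}$ at the appropriate level $N$, to produce disjoint-preserving pair-extensions that remain good for $P$, just as in Case~1 of Lemma~\ref{limitlem2}.

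By Lemma~\ref{limitlem4}, $\Gamma_{m-1}$ is well-founded, so $f(n,\cdot)$ stabilizes; the construction is uniformly computable, so $f$ is a computable $\Gamma_m$-approximation. Since $C$ is $\Gamma$-hyperimmune (relative to $D$), there exist $n^*,s^*$ with $C$ diagonal against $f(n^*,s^*)$ and $f(n^*,t)=f(n^*,s^*)$ for all $t\geq s^*$. A witness leaf $\xi\in\ell(T_{u-1})$ supplies $\psi(\xi)=((\rho^{*0},\sigma^{*0}),(\rho^{*1},\sigma^{*1}))$ with $C$ diagonal against $\phi_{u-1}(\xi)=\Psi^{\psi(\xi)}(n^*)$. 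The non-firing of the branching rule past $s^*$, together with monotonicity of $\Psi^{\cdot}(n^*)$ in $\prec_{p_{m-1}}$, will show that every deep enough cross-constraint-preserving pair-extension of $\psi(\xi)$ in $T_P^2$ has $\Psi$-value equal to $\phi_{u-1}(\xi)$; letting $P^*$ be the $\Pi_1^{0}$ sub-class of $P$ consisting of points $(X,Y)$ that, together with any cross-constraint mate, stabilize $\Psi^{\cdot}(n^*)$ at $\phi_{u-1}(\xi)$, the condition $((\rho^{*0},\sigma^{*0}),(\rho^{*1},\sigma^{*1}),P^*,D)$ will be the desired extension forcing $\mcal{R}_\Psi$.

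The hard part will be the two-sided goodness verification for the new condition. Unlike Lemma~\ref{limitlem5}, where the sub-class $P^*$ only needs to be non-empty below a single initial segment, here each $(\rho^{*i},\sigma^{*i})$ must remain good for $P^*$ on \emph{both} sides. The non-firing of the collapse rule past $s^*$ guarantees that $\psi(\xi)$ admits cross-constraint-preserving pair-extensions arbitrarily deep in $T_P^2$; combined with the combinatorial content of Lemma~\ref{limitlemcomb} applied at the compactness step, this should supply the two-sided goodness needed for $((\rho^{*0},\sigma^{*0}),(\rho^{*1},\sigma^{*1}),P^*,D)$ to form a valid condition.
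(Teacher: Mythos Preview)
Your branching criterion (``every pair-extension advances'') is too strong, and its failure tells you too little. If branching never fires at the witness leaf $\xi$, you only learn that at each stage $s$ there exists \emph{some} cross-constraint-preserving pair-extension of $\psi(\xi)$ in $(T_P[s])^2$ on which $\Psi(n^*)$ does not advance past $\phi_{u-1}(\xi)$. This does not give your claimed conclusion that ``every deep enough pair-extension has $\Psi$-value equal to $\phi_{u-1}(\xi)$'', and in particular it does not produce a sub-class $P^*\subseteq P$ for which \emph{both} $(\rho^{*i},\sigma^{*i})$ are good---which is precisely what a condition requires. The paper replaces your branching criterion by: branch at $\xi$ when the set of pair-extensions advancing $\Psi$ is \emph{sufficient} over $ext_{\t\xi}(\psi(\xi))$ in the sense of Definition~\ref{limitdefsuf}, i.e., no subtree of $T_P$ good on both sides avoids that set. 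Then non-branching literally hands you such a tree; the trees witnessing non-sufficiency form a $\Pi_1^0$ class $\mcal{T}$, and Lemma~\ref{limitlem5} is applied to $\mcal{T}$ to pick a member $\h T$ preserving $\Gamma$-hyperimmunity, yielding the next condition with new oracle $\h T$.

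The same missing idea breaks your collapse step. For the ``or'' case of a \onestep\ the surviving sibling-set $B$ must be non-empty, and you do not argue this; with your bookkeeping it need not hold, since the pair-extensions you install as children at stage $s$ may all later leave $T_P\times T_P$. The paper's fix is again the sufficient/\subsufficient\ framework: each non-leaf $\xi$ carries a finite set $A_\xi$ that is sufficient over its base pair, together with a disjoint-preserving extension $ext_\xi$; when a node is discovered bad, one replaces $ext_{\t\xi}$ by a further disjoint-preserving extension sending the bad string onto a witness of its badness, and invokes Claim~\ref{limitclaim3} to conclude that the surviving children form a \subsufficient\ (hence non-empty) set. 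Lemma~\ref{limitlemcomb} is used inside Claim~\ref{limitclaim3}, but on its own it does not supply two-sided goodness---the sufficient notion is the bridge you are missing.
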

\begin{proof}
For simplicity, assume  $|\ch\rho^0|=|\ch\rho^1|, |\ch\sigma^0|=|\ch\sigma^1|$,
otherwise extend them to be so.
In this proof, whenever we write $(\rho,\sigma)$ it automatically implies
$|\rho|\geq |\sigma|$.
We adopt the similar  convention as in Claim \ref{limitclaim4},
i.e., for every  
  $(\rho,\sigma)\in 3^{<\omega}\times 2^{<\omega}$,
every $n$, $\Psi^{(\rho,\sigma)}(n,s)$ is defined iff
$s\leq |\sigma|$;
we write $\Psi^{(\rho,\sigma)}(n)$ for
$\Psi^{(\rho,\sigma)}(n,|\sigma|)$;
for some $m$,
 for every oracle  $Y$, every $n,s\in\omega$,
$\Psi^Y(n,s)\in \Gamma_{m-1}$;
 $D=\emptyset$;
$T_P$ is a pruned, co-c.e. tree so that $P=[T_P]$.

Let $\Tcross$ denote the set of  pairs
$((\rho^0,\sigma^0),(\rho^1,\sigma^1))\in (T_P\cap [(\ch\rho^0,\ch\sigma^0)]^\preceq)\times
(T_P\cap [(\ch \rho^1,\ch \sigma^1)]^\preceq)$
such that $(\rho^0,\rho^1)$ does not positively
progress on any color $j\in 3$ compared to 
$(\ch \rho^0,\ch\rho^1)$.

The proof generally follows that of Lemma \ref{limitlem5}
except that we need  to incorporate the combinatorics
of $P$ as in Lemma \ref{limitlem4} in the following way.
In the computation of $f(n)$:
\begin{itemize}
\item The range of function $\psi$ is $\Tcross$
instead of $2^{<\omega}$.
\item Case (\ref{limiteq10}) can be interpreted as for some 
$\xi\in T_{u-1}\cup \ell(T_{u-1})$, it is found that 
the initial segment $\psi(\xi)$ can not be extended to be 
the next condition.
 In this Lemma, this becomes for some $\xi\in T_{u-1}\cup\ell(T_{u-1})$,
for the initial segment 
 $ ((\rho^0,\sigma^0),(\rho^1,\sigma^1))$ corresponding to $\xi$,
 $(\rho^i,\sigma^i)$ is not \good\ for $P$
for some $i\in 2$.
We will delete such $\xi$ just like in Lemma  \ref{limitlem5}.

\item Case (\ref{limiteq9}) can be interpreted as 
there are ``sufficiently" many extension $\h\rho$ of
$\psi(\xi)$ that make the Turing functional 
$\Psi^{\h\rho}(n)$ progress (compared to $\Psi^{\psi(\xi)}(n)$).
By ``sufficient" many, it  means: provably, one of them can be extended
to be the next condition's initial segment.

We have a similar case in this Lemma except that the definition 
of ``sufficient" becomes: provably, one of them can be extended
to some \good\ pairs as in Lemma \ref{limitlem2} case 1.
i.e., by our definition \ref{limitdefsuf} of sufficient,
Lemma \ref{limitlem2} case 1 can be rephrased as 
there are sufficiently many  extensions $((\t\rho^0,\t\sigma^0),(\t\rho^1,\t\sigma^1))$
of $((\ch\rho^0,\ch\sigma^0),(\ch\rho^1,\ch\sigma^1))$ such that
$\Psi^{((\t\rho^0,\t\sigma^0),(\t\rho^1,\t\sigma^1))}(n)\ne \t D(n)$ for some $n$

\item Since we here have a very different definition of 
``sufficient" (see definition \ref{limitdefsuf}) than that of Lemma \ref{limitlem5},
 the key point is that in 
case (\ref{limiteq10}), how the ``sufficient" of a set $A$ is
preserved while deleting certain element from $A$.
What we actually do is to: firstly extends the  $3^{<\omega}$-component of 
the members in $A$ in a disjoint preserving manner
so that the non \good\ member's component is extended to the witness of 
its non \good; then delete the element 
not in $T_P\times T_P$. We prove that this action
produces a \subsufficient\ set (see Claim \ref{limitclaim3}).

\end{itemize}

\begin{definition}[Sufficient]\label{limitdefsuf}
Given a set $A\subseteq (3^{<\omega}\times 2^{<\omega})\times(3^{<\omega}\times 2^{<\omega})$,
a $((\rho^0,\sigma^0),(\rho^1,\sigma^1))\in (3^{<\omega}\times 2^{<\omega})\times
(3^{<\omega}\times 2^{<\omega})$ with $|\rho^0|=|\rho^1|,|\sigma^0|=|\sigma^1|$,
we say $A$ is a \emph{sufficient} over $((\rho^0,\sigma^0),(\rho^1,\sigma^1))$
iff there does not exist a tree $T$ such that 
\begin{itemize}
\item $T\subseteq T_P$ and $(\rho^i,\sigma^i)$
is \good\ for $[T]$ for all $i\in 2$; and

\item for every $(\t\rho^i,\t\sigma^i)
\in T\cap [(\rho^i,\sigma^i)]^\preceq$,
if $(\t\rho^0,\t\rho^1)$ does not positively progress on any color $j\in 3$ compared to $(\rho^0,\rho^1)$,
 then $((\t\rho^0,\t\sigma^0),(\t\rho^1,\t\sigma^1))\notin A$.

\end{itemize}

We say $A$ is \emph{\subsufficient} over  $((\rho^0,\sigma^0),(\rho^1,\sigma^1))$ iff
there exists a $((\t\rho^0,\t\sigma^0),(\t\rho^1,\t\sigma^1))\in A$
with $(\t\rho^i,\t\sigma^i)\succeq (\rho^i,\sigma^i)$ for all $i\in 2$, a
$\h\rho^i\succeq\t\rho^i$ for each $i\in 2$ such that
$(\h\rho^0,\h\rho^1)$ does not positively progress on any color $j\in 3$
compared to $(\rho^0,\rho^1)$ and
 $(\h\rho^i,\t\sigma^i)$ is \good\ for $ P$ for all $i\in 2$.
\end{definition}

Note that in terms of sufficient,
case 1 of Lemma \ref{limitlem2} can be rephrased as 
for every $\t D$, the set $A$ consisting
  of $((\t\rho^0,\t\sigma^0),(\t\rho^1,\t\sigma^1))$
   such that  
$\Psi^{((\t\rho^0,\t\sigma^0),(\t\rho^1,\t\sigma^1))}(n)\ne \t D(n)$ for some $n$
is sufficient over the initial segment of the given condition.

 
 For every partial function $e$ on $3^{<\omega}$,
 $e$ give rise to a function on $(3^{<\omega}\times 2^{<\omega})
 \times (3^{<\omega}\times 2^{<\omega})$, also denoted as $e$,
 such that $e((\rho^0,\sigma^0),(\rho^1,\sigma^1))
  = ((e(\rho^0),\sigma^0), (e(\rho^1),\sigma^1))$.

For   a set $A\subseteq (3^{<\omega}\times 2^{<\omega})
\times (3^{<\omega}\times 2^{<\omega})$,
a $\rho\in 3^{<\omega} $,
we say $\rho$ is \emph{involved} in $A$
iff there exists a $((\rho^0,\sigma^0),(\rho^1,\sigma^1))\in A$
such that $\rho=\rho^i$ for some $i\in 2$.
Let $Involve(A)$ denote the set of strings $\rho\in 3^{<\omega}$ involved in $A$;
similarly for $(\rho,\sigma)$ is involved in $A$.

\begin{claim}\label{limitclaim3}
Suppose $(\rho^i,\sigma^i)$ is \good\ for $P$.
Suppose $A\subseteq   (3^{N_0}\times 2^{N_1})\times (3^{N_0}\times 2^{N_1})$ 
is sufficient over $((\rho^0,\sigma^0),(\rho^1,\sigma^1))$
where $|\rho^0|=|\rho^1|\wedge |\sigma^0|=|\sigma^1|$.
Suppose $ext:Involve(A)\rightarrow 3^{\t N}$ is a disjoint preserving extension over 
$(\rho^0,\rho^1)$.
Then
  the set  $ext(A)\cap (T_P\times T_P)$
 is \subsufficient\ over
$((\rho^0,\sigma^0),(\rho^1,\sigma^1))$.
\end{claim}
\begin{proof}

By Lemma \ref{limitlemcomb} there is a 
disjoint preserving extension over $(\rho^0,\rho^1)$,
namely $\t{ext}:ext(Involve(A))\rightarrow 3^{\h N}$ for some $\h N$
such that
\begin{align}\label{limiteq18}
&\text{ for every }\rho\in ext(Involve(A)),
\text{ every }\sigma\in 2^{N_1},
\\ \nonumber
&\text{ either }(\t{ext}(\rho),\sigma)\notin T_P
\text{ or }(\t{ext}(\rho),\sigma)\text{ is \good\ for }P.
\end{align}
Consider $$\h{ext}= \t{ext}\circ ext.$$
Clearly $\h{ext}$ is a disjoint preserving extension over 
$(\rho^0,\rho^1)$ since it is the composition of two 
disjoint preserving extension over $(\rho^0,\rho^1)$.
We show that for some $\t\tau\in ext(A)\cap (T_P\cap T_P)$,
$\h\tau = \t{ext}(\t\tau)$ and $\t\tau$ witness
the sub-sufficient over $((\rho^0,\sigma^0),(\rho^1,\sigma^1))$
 of $ext(A)\cap  (T_P\cap T_P)$.

Consider the set $B$
 such that 
 $(\rho,\sigma)\in B$
iff: 
\begin{align}\nonumber
\text{$\rho\in 3^{N_0}$; and  when $\rho\in Involve(A)$,
 $(\h{ext}(\rho),\sigma)$
is \good\ for $P$.}
\end{align}
Since  for every $i\in 2$,
 $(\rho^i,\sigma^i)$ is \good\ for $P$, we have
 for every $\rho\in 3^{\h N}\cap [\rho^i]^\preceq$,
there exists a $\sigma\in 2^{N_1}\cap [\sigma^i]^\preceq$ such that $(\rho,\sigma)\in T_P$.
In particular, for every $\rho\in Involve(A)$,
there exists a $\sigma\in  2^{N_1}\cap [\sigma^i]^\preceq$
such that $(\h{ext}(\rho),\sigma)\in T_P$, which means by (\ref{limiteq18}),
$(\h{ext}(\rho),\sigma)$ is \good\ for $P$.
Therefore, 
\begin{align}\nonumber
&\text{ for every $i\in 2$,  every $X\in[\rho^i] $,
there exists a $\sigma\in 2^{N_1}\cap [\sigma^i]^\preceq$}\\ \nonumber
&\text{
such that $(\rho,\sigma)\in B$ and }
\PX\cap [\sigma]\ne\emptyset.
\end{align}
Therefore,  we can enlarge $B$ to a tree $T\subseteq T_P$
such that 
\begin{itemize}
\item $(\rho^i,\sigma^i)$ is \good\ for $[T]$ for all $i\in 2$; and
\item $T\cap 3^{N_0}\times 2^{N_1} = B$.
\end{itemize}
Since $A$ is sufficient over  $((\rho^0,\sigma^0),(\rho^1,\sigma^1))$ and 
$A\subseteq   (3^{N_0}\times 2^{N_1})\times (3^{N_0}\times 2^{N_1})$, there exists  a
$((\t\rho^0,\t\sigma^0),(\t\rho^1,\t\sigma^1))\in A$
such that
\begin{itemize}
\item  $(\t\rho^i,\t\sigma^i)\in B$ and $(\t\rho^i,\t\sigma^i)\succeq(\rho^i,\sigma^i)$
for all $i\in 2$;
\item $(\t\rho^0,\t\rho^1)$ does not positively progress on any color $j\in 3$ compared to $(\rho^0,\rho^1)$.
\end{itemize}
By definition of $B$ and since 
$\t\rho^0,\t\rho^1\in Involve(A)$,
we have $(\h{ext}(\t\rho^i),\t\sigma^i)$
is \good\ for $P$; by definition of $\h{ext}$,
$(\h{ext}(\t\rho^0),\h{ext}(\t\rho^1))$
does not positively progress on any color $j\in 3$ compared to $(\rho^0,\rho^1)$
and $\h{ext}(\t\rho^i)\succeq ext(\t\rho^i)$
for all $i\in 2$.
Also note that for every $i\in 2$,
 $(ext(\t\rho^i),\t\sigma^i)\in T_P$ since 
$(\h{ext}(\t\rho^i),\t\sigma^i)\in T_P$ while $\h{ext}(\t\rho^i)\succeq ext(\t\rho^i)$
for all $i\in 2$.
Thus, $((ext(\t\rho^0),\t\sigma^0),(ext(\t\rho^1),\t\sigma^1))
\in ext(A)\cap (T_P\times T_P)$ witnesses
the \subsufficient\ of $ext(A)\cap (T_P\times T_P)$ and we are done.

\end{proof}

We observe the behavior of $\Psi$ on oracles
in the condition
$((\ch\rho^0,\ch\sigma^0),(\ch\rho^1,\ch\sigma^1),P,D)$ 
and define an $\Gamma_m$-approximation $f$ as following.
Fix an $n$.

\begin{definition}[Computing $f(n,\cdot)$]
We define $f(n,s)$ by induction on $s$.
\begin{enumerate}
\item By convention,
 $f(n,0) $ is the singleton
 $(\emptyset,\phi)$.

\item Suppose we have defined
$f(n,s) = \tcp=((T_0,\phi_0),\cdots,(T_{u-1},\phi_{u-1}))$,
an injective function
$\psi:T_{u-1}\cup\ell(T_{u-1})\rightarrow \Tcross$.
Moreover, 
 for each non leaf $\xi\in T_{u-1}$, 
 suppose we have assigned:
 a set $A_\xi\subseteq \Tcross\cap (3^{N_0}\times 2^{N_1})\times (3^{N_0}\times 2^{N_1})$
for some $N_0,N_1\in\omega$; 
a disjoint preserving extension over $(\ch\rho^0,\ch\rho^1)$
 $ext_\xi: 
Involve(A_\xi)\rightarrow 3^{\h N}$ for some $\h N$,
such that let $\t\xi$ be the immediate predecessor of $\xi$
\footnote{In case $\xi=\bot$, $ ext_{\t\xi}(\psi(\xi)) = ((\ch\rho^0,\ch\sigma^0),(\ch\rho^1,\ch\sigma^1))$},
\begin{itemize}
\item  $A_\xi
  \text{ is sufficient over }
ext_{\t\xi}(\psi(\xi));$
\item 
$ \psi$\text{ restricted on }$T_{u-1}\cap \omega^{|\xi|+1}$
\text{ is a one-one function with the range } $
\{\tau\in A_\xi: ext_\xi(\tau)\in (T_P[s]\times T_P[s])\}$;
\text{ and }$\psi(\bot) = ((\ch\rho^0,\ch\sigma^0),(\ch\rho^1,\ch\sigma^1))$;
 
\item for every $\h\xi,\xi\in T_{u-1}\cup\ell(T_{u-1}),
\text{ if }\h\xi\succ\xi, \text{ then }
\psi(\h\xi)\succ ext_{\t\xi}(\psi(\xi))$; 

\item  for every 
$\bot\ne\xi\in T_{u-1}\cup\ell(T_{u-1}), \phi_{u-1}(\xi) = \Psi^{ext_{\t\xi}(\psi(\xi))}(n)$.

\end{itemize}

\item Define $f(n,s+1)$ depending on which of the following cases
occur: for some non root $\xi\in T_{u-1}\cup \ell(T_{u-1})$,
suppose $\t\xi$ is the immediate predecessor of $\xi$
and $\psi(\xi) = ((\rho^0,\sigma^0),(\rho^1,\sigma^1))$,
\begin{align}\label{limiteq14}
\hspace{1cm}&\xi\in \ell(T_{u-1})
\text{ and it is found at this time that the set  }
\\ \nonumber
&\ \ \big\{\tau\in \Tcross
\cap [ext_{\t\xi}(\psi(\xi))]^\prec:
\Psi^\tau(n)\succ_{p_{m-1}}\phi_{u-1}(\xi)
\big\}\text{ is sufficient over }
ext_{\t\xi}(\psi(\xi));
\\  \label{limiteq15}
&
\text{for some }i\in 2, (ext_{\t\xi}(\rho^i),\sigma^i)\text{ is not \good\ for 
}[T_P[s+1]];
\\ \label{limiteq16}
&\text{otherwise}.
\end{align}

\item
\begin{itemize}
\item In case of (\ref{limiteq14}),
by compactness, let $A_\xi\subseteq ((3^{N_0}\times 2^{N_1})
\times(3^{N_0}\times 2^{N_1}))\cap \Tcross$
be a witness for the sufficient over $ext_{\t\xi}(\psi(\xi))$ of that set.
Let $ext_\xi: Involve(A_\xi)\rightarrow 3^{N_0}$
be an identity function.
We add a set $B\subseteq \omega^{|\xi|+1}\cap [\xi]^\preceq$
of size $|A_\xi|$ to $T_{u-1}$ getting $T_u$;
then define $\psi$ on $T_u$ by inheriting
$\psi$
 and 1-1 mapping $B$  to $A_\xi$;
finally, define $\phi_u$ by inheriting
$\phi_{u-1}$ and $\phi_u(\h\xi) = \Psi^{ext_\xi(\psi(\h\xi))}(n )$
for all $\h\xi\in B$.
Let $f(n,s+1) = \tcp^\smallfrown (T_u,\phi_u)$.
Clearly $f(n,s+1)$ is a \treecomputation\ in $\Gamma_{m-1}$
since $T_u$ is a \onestep\ of $T_{u-1}$
(fulfilling the either case).
The inductive hypothesis 
 on $\psi$ is easily verified.

\item  In case of (\ref{limiteq9}), suppose
$\xi$ is the minimal string witness (\ref{limiteq9}),
i.e., no proper initial segment of $\xi$ is a witness.
Without loss of generality, suppose for some 
$\h\rho\succeq ext_{\t\xi}(\rho^0)$,
$(\h\rho,\sigma^0)\notin T_P[s+1]$.
We will replace $ext_{\t\xi}$ by $\h{ext}$ which extends
the value of the   $ext_{\t\xi}$ 
so that $\h\rho$ is now in the range of
 $\h{ext} $. Therefore, for $\h\xi\in T_{u-1}\cap \omega^{|\xi|}$,
if  $ext_{\t\xi}(\psi(\h\xi))$
 involves $(\rho^0,\sigma^0)$, then $\psi(\h\xi)$
 has no extension in
 $\h{ext}(A_{\t\xi})\cap (T_P[s+1]\times T_P[s+1])$.
 Such $\h\xi$ will be deleted from $T_{u-1}\cap \omega^{|\xi|}$.
On the other hand, $A_{\t\xi}$ remains.

More specifically,
by Lemma \ref{limitlemcomb0},  there is 
a disjoint preserving extension $\h{ext }:Involve(A_{\t\xi})\rightarrow 3^{|\h\rho|}$
over $(\ch\rho^0,\ch\rho^1)$ such that $\h{ext}(\rho^0) = \h\rho$ and
$\h{ext}(\rho)\succeq ext_{\t\xi}(\rho)$
for all $\rho\in Involve(A_{\t\xi})$.
Let $B\subseteq T_{u-1}\cap \omega^{|\xi|}$ be the set 
of $\h\xi$ such that   $\psi(\h\xi)$ does not involve $(\rho^0,\sigma^0)$.
Clearly $B$ is a proper subset of $T_{u-1}\cap \omega^{|\xi|}$
since $\xi\notin B$.
The point is
\begin{align}\nonumber
\hspace{1cm}B\ne\emptyset\text{ since by Claim \ref{limitclaim3} }
\h{ext}(\psi(B))\text{ is \subsufficient.}
\end{align}

Thus, let $T_u= (T_{u-1}\setminus [\t\xi]^\prec)\cup B$,
we have $T_u$ is a \onestep\ of $T_{u-1}$ (fulfilling the or case).
Let $\h\psi\uhr T_u = \psi\uhr T_u $,
$\phi_u=\phi_{u-1}\uhr T_u$
\footnote{Note that $\Psi^{\h{ext}(\h\psi(\h\xi))}(n) = 
\Psi^{ext_{\t\xi}(\psi(\h\xi))}$ for all $\h\xi\in B$.}.
 Let $f(n,s+1) = \tcp^\smallfrown (T_u,\phi_u)$.
The other part of inductive hypothesis (with respect to 
$\h\psi,\h{ext}$) is easily verified.

\item In case of (\ref{limiteq10}),
let $f(n,s+1)=f(n,s)$.

\end{itemize}

\end{enumerate}

\end{definition}

As we argued in  item (4)
of the definition of $f(n,\cdot)$,
we have $f$ is a $\Gamma_m$-approximation.
By compactness,
$f$ is computable. Since, $C$ is
$\Gamma$-hyperimmune, there are
$n^*,s^*$ such that
\begin{itemize}
\item $C$ diagonal against $f(n^*,s^*)$
and
\item $f(n^*,t) =f(n^*,s^*)$ for all $t\geq s^*$.

\end{itemize}

Suppose $f(n^*,s^*)=((T_0,\phi_0),\cdots,(T_{u-1},\phi_{u-1}))$.
By definition of diagonal against,
there is a $\xi\in \ell(T_{u-1})$
such that $C$ diagonal against $\phi_{u-1}(\xi)$.
Consider $((\h\rho^0,\h\sigma^0),(\h\rho^1,\h\sigma^1))= ext_{\t\xi}(\psi(\xi))$
where $\t\xi$ is the immediate predecessor of $\xi$;
and the $\Pi_1^0$ class $\mcal{T}$ of trees $T$
such that:
\begin{itemize}
\item $T\subseteq T_P\cap ([(\h\rho^0,\h\sigma^0)]^\preceq\cup [(\h\rho^1,\h\sigma^1)]^\preceq)$
 and $(\h\rho^i,\h\sigma^i)$
is \good\ for $[T]$ for all $i\in 2$; and

\item for every $(\t\rho^i,\t\sigma^i)
\in T\cap [(\h\rho^i,\h\sigma^i)]^\preceq$,
if $(\t\rho^0,\t\rho^1)$ does not positively progress on any color $j\in 3$ compared to $(\h\rho^0,\h\rho^1)$,
 then $\Psi^{((\t\rho^0,\t\sigma^0),(\t\rho^1,\t\sigma^1))}(n^*)
  = \phi_{u-1}(\xi)$.

\end{itemize}

Since case $f(n^*,t) =f(n^*,s^*)$ for all $t\geq s^*$,
in particular, case (\ref{limiteq14}) does not occur after 
step $s^*$,
therefore the class $\mcal{T}$ is non empty
(check the definition of sufficient).
Since $\mcal{T}$ is a $\Pi_1^0$ class,
by Lemma \ref{limitlem5}, let $\h T\in \mcal{T}$
be such that $C$ is $\Gamma$-hyperimmune relative to $\h T$.
Since $\psi(\xi)\in A_{\t\xi}\subseteq \Tcross$ and 
$ext_{\t\xi}$ is disjoint preserve extension over $(\ch\rho^0,\ch\rho^1)$,
we have $(\h\rho^0,\h\rho^1)$ does not positively progress on any color $j\in 3$
compared to $(\ch\rho^0,\ch\rho^1)$.
Combine with
$(\h\rho^i,\h\sigma^i)$ being \good\ for $[\h T]$ for all $i\in 2$, we have
$((\h\rho^0,\h\sigma^0),(\h\rho^1,\h\sigma^1), [\h T],\h T)$
 is a condition extending $((\ch\rho^0,\ch\sigma^0),(\ch\rho^1,\ch\sigma^1),P,D)$.
By definition of $\mcal{T}$, $((\h\rho^0,\h\sigma^0),(\h\rho^1,\h\sigma^1), [\h T],\h T)$
 forces the requirement $\mcal{R}_\Psi$.

\end{proof}

Now the rest of the proof follows exactly as that of \ref{limitlem2}.

\end{proof}

\subsection{Yet some other basis theorem}
Note that if two colorings are almost disjoint, then they are Turing equivalent.
Therefore, if we replace ``$Y^0,Y^1$ are not almost disjoint" in 
Lemma \ref{limitlem2} by $Y^0,Y^1$ are not Turing equivalent,
then we arrive at the following question.

\begin{question}\label{limitques1}
Given two incomputable Turing degree $D_0\ngeq_T D_1$,
a non empty $\Pi_1^0$ class $Q\subseteq 2^\omega$,
does there exist a $X\in Q$ such that
$X\ngeq_T D_0$ and $D_0\oplus X\ngeq_T D_1$?

\end{question}

Clearly the difficulty of question \ref{limitques1}
is that when forcing some fact about $D_0\oplus X$,
the condition we use is usually a $\Pi_1^{0, D_0}$
class. But there is no guarantee that a $\Pi_1^{0, D_0}$
class contains a member not computing $D_0$.

\section{Product of infinitely many Ramsey's theorem}
\label{limitsec3}

Note that $(\msf{RT}_2^1)^\omega$ is capable of encoding 
any hyperarithematic degree since it is capable of encoding
fast growing function.

\begin{proposition}
For every function $f\in \omega^\omega$,
there is a $(\msf{RT}_2^1)^\omega$ instance 
$(C_0,C_1,\cdots)$ such that every solution 
of $(C_0,C_1,\cdots)$ compute 
a function $g$ such that $g\geq f$.

\end{proposition}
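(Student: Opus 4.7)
The plan is to encode $f$ directly into the instance by making each coloring $C_n$ have only finitely many zeros, with the cutoff determined by $f(n)$. More precisely, for each $n\in\omega$, define $C_n:\omega\rightarrow 2$ by $C_n(m)=0$ if $m\leq f(n)$ and $C_n(m)=1$ if $m>f(n)$. Then $C_n^{-1}(0)=\{0,1,\ldots,f(n)\}$ is finite, so any infinite monochromatic set $G_n$ for $C_n$ must be contained in $C_n^{-1}(1)=(f(n),\infty)\cap\omega$. In particular, $\min(G_n)>f(n)$.

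Now given any solution $(G_0,G_1,\ldots)$ to the instance $(C_0,C_1,\ldots)$, define $g\in\omega^\omega$ by $g(n)=\min(G_n)$. Since $(G_0,G_1,\ldots)$ is presented as a single real coding the $\omega$-sequence of infinite sets, decoding the $n$-th component and computing its minimum is uniformly computable from the solution. By the preceding paragraph, $g(n)>f(n)$, so in particular $g\geq f$, as required.

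There is essentially no obstacle here; the only minor point to note is that the instance $(C_0,C_1,\ldots)$ constructed is $f$-computable rather than computable, which is consistent with the statement of the proposition (which does not restrict the complexity of the instance). Consequently, $(\msf{RT}_2^1)^\omega$ can soc-encode any $f\in\omega^\omega$, and in particular, by iterating this idea along a path through $\mathcal{O}$, one obtains the remark preceding the proposition that $(\msf{RT}_2^1)^\omega$ is capable of encoding any hyperarithmetic degree.
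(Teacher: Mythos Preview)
Your proof is correct and follows essentially the same approach as the paper: define $C_n^{-1}(0)=[0,f(n)]$, observe that any infinite homogeneous set $G_n$ must lie in $C_n^{-1}(1)$, and take $g(n)=\min G_n$. The paper is slightly terser but the construction and reasoning are identical.
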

\begin{proof}
Fix $f\in\omega^\omega$.
Let $C_n\in 2^\omega$ be such that 
$C^{-1}(0) = [0,f(n)]\cap \omega$.
Let $(G_0,G_1,\cdots)$ be a solution 
to $(C_0,C_1,\cdots)$.
Clearly $G_n\subseteq C^{-1}_n(1)$ for all $n\in\omega$.
Thus clearly $(G_0,G_1,\cdots)$
compute a function $g\geq f$.

\end{proof}
On the other hand,
strong
 cone avoidance of non hyperarithmetic degree
  for $(\msf{RT}_2^1)^\omega$
follows from  the following Solovay's theorem \cite{Solovay1978Hyperarithmetically}.

\begin{theorem}[Solovay]
\label{limitth5}
For every non hyperarithmetic  Turing degree $D$,
there exists an infinite set $X$
such that none of the subset of $X$ computes
$D$.

\end{theorem}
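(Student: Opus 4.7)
The plan is to prove the contrapositive by a Mathias-style forcing argument with hyperarithmetic reservoirs. Fix a non-hyperarithmetic Turing degree $D$. Conditions will be pairs $(F,Y)$ where $F\subseteq_{fin}\omega$, $Y\subseteq\omega$ is infinite and hyperarithmetic, and $\max F<\min Y$; extension is $(F',Y')\leq (F,Y)$ iff $F\subseteq F'\subseteq F\cup Y$, $Y'\subseteq Y$, and $Y'$ is hyperarithmetic. The condition $(F,Y)$ represents the set of infinite sets $Z$ with $F\subseteq Z\subseteq F\cup Y$. The desired $X$ will be the union $\bigcup_t F_t$ along a sufficiently generic descending chain of conditions.

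For each Turing functional $\Psi_e$ and each potential oracle that is an infinite subset of $X$, I need a dense family of conditions forcing that $\Psi_e^Z\ne D$ for all infinite $Z\subseteq X$. Given $(F,Y)$ and $\Psi_e$, set up the dichotomy: either (i) there is an extension $(F',Y')$, an $n\in\omega$, and a use pattern so that $\Psi_e^Z(n)\downarrow\ne D(n)$ for every infinite $Z$ in the extended condition, or (ii) no such extension exists. In case (i) take the extension. The crux is to derive a contradiction from case (ii) holding at every strengthening.

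The key claim is: if case (ii) is unavoidable below some $(F,Y)$, then $D$ is hyperarithmetic. Indeed, in that regime, for each $n$ one can hyperarithmetically search among finite $F'\supseteq F$ with $F'\setminus F\subseteq Y$ together with infinite hyperarithmetic $Y'\subseteq Y$ to find one on which $\Psi_e^Z(n)$ is forced to a constant value; that value must equal $D(n)$, because otherwise case (i) would apply. The search and the stabilization are each expressible using quantifiers over hyperarithmetic sets (a $\Sigma^1_1$ predicate relative to $Y$), bounded by the Church--Kleene ordinal of $Y$, hence hyperarithmetic. This contradicts the non-hyperarithmeticity of $D$.

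The principal obstacle is to verify that the ``stabilization" argument in case (ii) really does produce a hyperarithmetic definition of $D$: one has to iterate the forcing to cover every Turing functional (so that every infinite subset of $X$, computing $D$ via some $\Psi_e$, is addressed), and one has to justify that the search tree through hyperarithmetic $Y'$'s can be analyzed by a recursion along a well-ordering bounded by an admissible ordinal. Once this is in place, the generic $X$ is an infinite set such that no Turing functional applied to any infinite subset $Z\subseteq X$ yields $D$, which is the desired conclusion.
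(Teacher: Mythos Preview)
The paper gives no proof of this theorem; it is quoted as Solovay's result with a citation and then used as a black box in the next proposition. So there is no paper-proof to compare against.

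Your Mathias-forcing approach with hyperarithmetic reservoirs is the standard route, but two genuine gaps remain. First, your conditions $(F,Y)$ only directly control infinite $Z$ with $F\subseteq Z\subseteq F\cup Y$, whereas the conclusion concerns \emph{every} infinite $Z\subseteq X$, including those omitting elements of $F$. The standard fix is to index requirements by pairs $(e,F_0)$ ranging over all finite $F_0$, and to treat $F_0$ (not $F$) as the stem once $F_0\subseteq F$ and $\max F_0<\min Y$; you gesture at this (``iterate the forcing to cover every Turing functional'') but do not carry it out.

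Second, your dichotomy is incomplete and the definability step in case~(ii) is not yet an argument. As stated, case~(ii) includes the possibility that no extension forces $\Psi_e^{Z}(n)$ to converge at all, in which case your search for a forced value never halts and you cannot read off $D(n)$. The repair is to first thin the reservoir, via an effective clopen-Ramsey/Galvin--Prikry argument (which is where the real work lies), to a hyperarithmetic $Y'$ on which, for each $n$, either every infinite $Z\subseteq Y'$ makes $\Psi_e^{F_0\cup Z}(n)$ diverge, or every such $Z$ gives a single value $v_n$. The divergent alternative already forces the requirement; otherwise either some $v_n\neq D(n)$ (again done), or $D(n)=v_n$ for all $n$ and $D$ is hyperarithmetic because the $v_n$ are uniformly hyperarithmetic in $n$. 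Your appeal to $\Sigma^1_1$-bounding is the right instinct for why the homogeneous sets can be found uniformly below $\omega_1^{CK}$, but that is exactly the content that must be supplied, and it is not routine.
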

  
\begin{proposition}\label{limitth1}
Given a non hyperarithmetic Turing degree $D$,
 every $(\msf{RT}_2^1)^\omega$
instance admit a solution that does not   compute $D$.
\end{proposition}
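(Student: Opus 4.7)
The plan is to derive this from Solovay's theorem (Theorem \ref{limitth5}). First, I would apply Solovay's theorem to the non-hyperarithmetic degree $D$ to obtain an infinite set $X\subseteq\omega$ such that no infinite subset of $X$ computes $D$. Solovay's original proof in fact gives the stronger preservation form that $D$ remains non-hyperarithmetic in every infinite $Y\subseteq X$, and this relativizes in the usual way.

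Next, I would construct the solution $(G_n)_n$ recursively along a decreasing chain of reservoirs $X=X_0\supseteq X_1\supseteq\cdots$. At stage $n$, pick $c_n\in 2$ so that $X_n\cap C_n^{-1}(c_n)$ is infinite, split this intersection into two disjoint infinite pieces via a canonical $X_n$-computable partition, designate one as $G_n$ and the other as an intermediate reservoir, and then further refine this reservoir by applying relativized Solovay with respect to the partial join $\bigoplus_{m\leq n}G_m$ to obtain $X_{n+1}$ with $D$ still non-hyperarithmetic in $X_{n+1}\oplus\bigoplus_{m\leq n}G_m$. Each $G_n$ is then an infinite monochromatic subset for $C_n$ lying in $X$, so $(G_n)_n$ is a valid solution.

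The main obstacle is concluding that $\bigoplus_n G_n$ does not compute $D$ at the limit, since a Turing reduction to $D$ may use arbitrarily many $G_m$'s across different queries and the join is not literally a subset of $X$. I would handle this by a compactness argument: a purported reduction $\Phi^{\bigoplus G_n}=D$ uses, for each input $k$, only finitely many $G_m$'s, and since the tail $(G_m)_{m>N}$ is built entirely within the reservoir $X_{N+1}$, the reduction factors through an oracle of the form $X_{N+1}\oplus\bigoplus_{m\leq N}G_m$ augmented with the data of the tail colorings used by the canonical splits; provided the Solovay refinement at each stage is set up to preserve non-hyperarithmeticity of $D$ relative to this fuller oracle (which is the subtle point requiring care when the instance itself is complex), the preserved invariant at stage $N$ is violated and the contradiction is obtained.
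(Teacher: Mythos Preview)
Your plan has a real gap at the limit step, and the paper's proof shows how to avoid it entirely.

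The problem is exactly the ``subtle point'' you flag: even if at every finite stage $D$ remains non-hyperarithmetic relative to $X_{N+1}\oplus\bigoplus_{m\leq N}G_m$, this says nothing about $\bigoplus_n G_n$. A Turing reduction from $\bigoplus_n G_n$ to $D$ may use unboundedly many coordinates as the input varies, so it does not factor through any single finite-stage oracle. Your suggested fix---absorbing the tail $(G_m)_{m>N}$ into $X_{N+1}$ plus the data of the splits---does not work in general: the later splits depend on the colorings $C_m$ and on the Solovay refinements, none of which need be computable from $X_{N+1}$. So the invariant you are maintaining is simply not strong enough to control the full join, and there is no compactness principle that bridges the gap.

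The paper sidesteps all of this with one observation: you can arrange the entire solution to be computable from a \emph{single} infinite subset $G\subseteq X$. Thin $X$ diagonally to $G=\{n_0<n_1<\cdots\}$ so that for every $t$ the tail $G\cap[n_t,\infty)$ is monochromatic for $C_t$ (pick $n_0\in X$, pass to an infinite $X_1\subseteq X$ monochromatic for $C_0$, pick $n_1\in X_1$, and so on). Then set $G_t=G\cap[n_t,\infty)$; the sequence $(G_t)_t$ is uniformly computable from $G$, and since $G\subseteq X$, Solovay's theorem gives $G\ngeq_T D$ directly. No relativized iteration, no limit argument---just one application of Solovay and a diagonal thinning.
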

\begin{proof}
Fix a $(\msf{RT}_2^1)^\omega$ instance $(C_0,C_1,\cdots)$.
Let $X$ be an infinite set as in Solovay's theorem.
Let $G=\{n_0<n_1<\cdots\}$ be an infinite subset of $X$
such that for every $t\in\omega$,
$G\cap [n_t,\infty)$
is monochromatic for $C_t$.
Clearly $G$ computes a solution to $(C_0,C_1,\cdots)$
but $G$ does not compute $D$.

\end{proof}

We wonder if $(\msf{RT}_2^1)^\omega$ is capable of encoding
$\msf{RT}_3^1$.
\begin{question}
Is it true that $\msf{RT}_3^1\leq_{soc} (\msf{RT}_2^1)^\omega$?
\end{question}

\bibliographystyle{amsplain}
\bibliography{F:/6+1/Draft/bibliographylogic}

\providecommand{\bysame}{\leavevmode\hbox to3em{\hrulefill}\thinspace}
\providecommand{\MR}{\relax\ifhmode\unskip\space\fi MR }
\providecommand{\MRhref}[2]{%
  \href{http://www.ams.org/mathscinet-getitem?mr=#1}{#2}
}
\providecommand{\href}[2]{#2}
\begin{thebibliography}{10}

\bibitem{cholak2019some}
Peter~A Cholak, Damir~D Dzhafarov, Denis~R Hirschfeldt, and Ludovic Patey,
  \emph{Some results concerning the $\mathsf{SRT}_2^2$ vs. $\mathsf{COH}$
  problem}, arXiv preprint arXiv:1901.10326 (2019).

\bibitem{Cholak2001strength}
Peter~A. Cholak, Carl~G. Jockusch, and Theodore~A. Slaman, \emph{{On the
  strength of Ramsey's theorem for pairs}}, Journal of Symbolic Logic
  \textbf{66} (2001), no.~01, 1--55.

\bibitem{downey2011binary}
Rodney~G Downey, Noam Greenberg, Carl~G Jockusch, and Kevin~G Milans,
  \emph{Binary subtrees with few labeled paths}, Combinatorica \textbf{31}
  (2011), no.~3, 285.

\bibitem{Dzhafarov2016Ramseys}
Damir Dzhafarov, Ludovic Patey, Reed Solomon, and Linda Westrick,
  \emph{Ramsey’s theorem for singletons and strong computable reducibility},
  Proceedings of the American Mathematical Society \textbf{145} (2017), no.~3,
  1343--1355.

\bibitem{Dzhafarov2009Ramseys}
Damir~D. Dzhafarov and Carl~G. Jockusch, \emph{Ramsey's theorem and cone
  avoidance}, Journal of Symbolic Logic \textbf{74} (2009), no.~2, 557--578.

\bibitem{greenberg2011diagonally}
Noam Greenberg and Joseph~S Miller, \emph{Diagonally non-recursive functions
  and effective hausdorff dimension}, Bulletin of the London Mathematical
  Society \textbf{43} (2011), no.~4, 636--654.

\bibitem{Hirschfeldt2015Slicing}
Denis~R. Hirschfeldt, \emph{Slicing the truth}, Lecture Notes Series. Institute
  for Mathematical Sciences. National University of Singapore, vol.~28, World
  Scientific Publishing Co. Pte. Ltd., Hackensack, NJ, 2015, On the computable
  and reverse mathematics of combinatorial principles, Edited and with a
  foreword by Chitat Chong, Qi Feng, Theodore A. Slaman, W. Hugh Woodin and Yue
  Yang. \MR{3244278}

\bibitem{khan2017forcing}
Mushfeq Khan and Joseph~S Miller, \emph{Forcing with bushy trees}, Bulletin of
  Symbolic Logic \textbf{23} (2017), no.~2, 160--180.

\bibitem{Kjos-Hanssen2009Infinite}
Bj{\o}rn Kjos-Hanssen, \emph{Infinite subsets of random sets of integers},
  Mathematics Research Letters \textbf{16} (2009), 103--110.

\bibitem{kjos2019extracting}
Bj{\o}rn Kjos-Hanssen and Lu~Liu, \emph{Extracting randomness within a subset
  is hard}, European Journal of Mathematics (2019), 1--14.

\bibitem{Kumabe2009fixed}
Masahiro Kumabe and Andrew~EM Lewis, \emph{A fixed point free minimal degree},
  Journal of the London Mathematical Society \textbf{80} (2009), no.~3,
  785--797.

\bibitem{Liu2012RT22}
Lu~Liu, \emph{{RT$^2_2$ does not imply WKL$_0$}}, Journal of Symbolic Logic
  \textbf{77} (2012), no.~2, 609--620.

\bibitem{liu2019avoid}
\bysame, \emph{{Avoid Schnorr randomness}}, Arxiv (2019),
  no.~arXiv:1912.09052v1.

\bibitem{monin2019weakness}
Benoit Monin and Ludovic Patey, \emph{The weakness of the pigeonhole principle
  under hyperarithmetical reductions}, arXiv preprint arXiv:1905.08425 (2019).

\bibitem{monin2019pi}
Benoit Monin, Ludovic Patey, et~al., \emph{{$\Pi_1^0$-}encodability and
  omniscient reductions}, Notre Dame Journal of Formal Logic \textbf{60}
  (2019), no.~1, 1--12.

\bibitem{Seetapun1995strength}
David Seetapun and Theodore~A. Slaman, \emph{{On the strength of {R}amsey's
  theorem}}, Notre Dame Journal of Formal Logic \textbf{36} (1995), no.~4,
  570--582.

\bibitem{Simpson2009Subsystems}
Stephen~G. Simpson, \emph{{Subsystems of Second Order Arithmetic}}, Cambridge
  University Press, 2009.

\bibitem{Solovay1978Hyperarithmetically}
Robert~M. Solovay, \emph{Hyperarithmetically encodable sets}, Trans. Amer.
  Math. Soc. \textbf{239} (1978), 99--122. \MR{0491103}

\bibitem{Wang2014Cohesive}
Wei Wang, \emph{Cohesive sets and rainbows}, Annals of Pure and Applied Logic
  \textbf{165} (2014), no.~2, 389--408.

\end{thebibliography}

\end{document}